\newcommand{\bydef}{\,\stackrel{\mbox{\tiny\textnormal{\raisebox{0ex}[0ex][0ex]{def}}}}{=}\,}
\newcommand{\bv}{{\boldsymbol{\rm v}}}
\newcommand{\bzero}{\mathbf{0}}
\newcommand{\boldx}{\mathbf{x}}
\newcommand{\boldy}{\mathbf{y}}
\newcommand{\constant}{M}
\newcommand{\conjugacy}{K}
\newcommand{\centerconj}{K_c}
\newcommand{\hyperbolicconj}{K_h}
\newcommand{\centerstableconj}{K_{cs}}
\newcommand{\stablebranch}{h_s}
\newcommand{\unstablebranch}{h_u}
\newcommand{\branch}{h}
\newcommand{\dynamics}{f}
\newcommand{\centerdynamics}{r}
\newcommand{\stabledynamics}{q_{s}}
\newcommand{\centersubspace}{X_c}
\newcommand{\centerstablesubspace}{X_{cs}}
\newcommand{\energy}{\mathcal{E}}
\newcommand{\kap}{ - (1 -2 m_1)}
\begin{document}

\title{Critical homoclinics in a restricted four body problem
\thanks{The first author was partially supported by NWO-VICI grant 639033109.  \\
The second author was partially supported by NSF grant 
DMS-1813501.
}
}
%

\subtitle{numerical continuation and center manifold computations}


\author{Wouter Hetebrij \and
            J.D. Mireles James 
}


\institute{W. Hetebrij 
          \at Vrije Universiteit Amsterdam, 
               Department of Mathematics \\
               \email{w.a.hetebrij@vu.nl}
                            \and
              J.D. Mireles James \at
              Florida Atlantic University, Department of Mathematical Sciences \\
              \email{jmirelesjames@fau.edu}
}

\date{Received: date / Accepted: date}

\maketitle

\begin{abstract}
The present work studies the robustness of certain basic homoclinic motions 
in an equilateral restricted four body problem.  The problem can be 
viewed as a two parameter family of conservative autonomous vector fields. 
The main tools are numerical continuation techniques for homoclinic and periodic 
orbits, as well as formal series methods for computing normal forms and center
stable/unstable manifold parameterizations.  After careful numerical study of a number of
special cases we formulate several conjectures about the global bifurcations 
of the homoclinic families.   
\keywords{$4$-body problem
\and homoclinic dynamics
\and critical equilibria
\and center manifold parameterization}
\PACS{45.50.Jf	 \and 45.50.Pk \and 45.10.-b \and 02.60.Lj \and 05.45.Ac}
\subclass{70K44 \and 34C45 \and 70F15}
\end{abstract}



\section{Introduction} \label{sec:intro}
Suppose that three gravitating bodies are arranged in the equilateral 
triangle configuration of Lagrange.  
The \textit{circular restricted four body problem} (CRFBP)
studies the dynamics of a fourth massless particle in a co-rotating reference frame.
The problem was first introduced by Pedersen
 \cite{pedersen1,pedersen2}, and we recall the equations of motion
and other basic facts in Section \ref{sec:equationsOfMotion}.
A recent work by Kepley and Mireles James  \cite{MR3919451}
studies -- in the case of equal masses -- certain ``short'' or ``basic'' homoclinic motions 
at the center of mass of the three bodies.   In the present work
we are interested in the fate of these basic homoclinic motions 
as the system is  perturbed away from the equal mass case:
in particular their robustness, bifurcations, and eventual disappearance.
In preparation for this discussion we
briefly review what is know about the structure of the equilibrium set.

The equilibrium solutions -- or \textit{libration points} -- of the CRFBP 
are the main topic of the study \cite{MR510556}  by Sim\'{o}.
In that work one finds detailed numerical evidence in support of 
the claim that the problem has either 8, 9, or 10 libration points, whose number and 
location depend on the mass ratios.
This conjecture was eventually settled in the affirmative by Barros and Leandro
using mathematically rigorous computer assisted methods of proof
 \cite{MR2232439,MR2784870,MR3176322}, and we recount their results
 after introducing a little notation and terminology. 

Appropriate choice of units results in a unit value of the gravitational constant and total mass
of the system.  The massive bodies are 
labeled according to the convention that $ m_3 \leq m_2 \leq m_1$.
The parameter space of the CRFBP is then reduced to the $2$-simplex 
\[
\mathfrak{S} = \left\{ (m_1, m_2, m_3) \in \mathbb{R}^3 \, \colon \, m_1 + m_2 + m_3 = 1, 
\mbox{ and }  m_3 \leq m_2 \leq m_1 \right\},
\]
determined by the vertices $v_0 = (1/3,1/3,1/3)$, 
$v_1 = (1/2,1/2,0)$, and $v_2 = (1,0,0)$.

We refer to the special system with mass parameters $m_1 = m_2 = m_3 = 1/3$
as the \textit{triple Copenhagen problem}.  This is a nod to the 
traditional name of the equal mass case of the circular restricted three 
body problem (CRTBP), which is called 
\textit{the Copenhagen problem} in honor of the work done at the 
Copenhagen observatory in the first decades of the Twentieth Century
during the tenure of Elis Str\"{o}mgren.  
See for example the review article \cite{stromgrenRef} by Str\"{o}mgren, 
as well as the detailed discussion in Chapter 9 of the book of Szebehely \cite{theoryOfOrbits}.

We remark that if $m_3 = 0$ then the CRFBP reduces to the CRTBP, and that if 
$m_2 = m_3 = 0$ then the problem reduces further to the rotating Kepler problem.  
The CRTBP is treated in great detail elsewhere, and we will not dwell on it further other than to say that
the problem is well known to have five libration points for all values of the mass ratio $\mu = m_2/m_1 \neq 0$.
For much more thorough discussion we refer the interested reader again to Chapter 9 of the book of 
Szebehely \cite{theoryOfOrbits}, or to the more modern treatment in the book of Meyer and Hall \cite{MR3642697}.

We refer to an equilibrium solution in the interior of the closed equilateral triangle 
as an \textit{inner libration point}, and an equilibrium 
in the complement of this triangle as an \textit{outer libration point}.
A schematic illustrating the phase space
is given in Figure \ref{rotatingframe}.  
From \cite{MR2232439,MR2784870,MR3176322}
we have the following complete description of the equilibrium set.
\begin{itemize}
\item \textbf{(I)} For each  $(m_1, m_2, m_3) \in \mathfrak{S}$ with $m_3 > 0$ there are six outer libration 
points.  We denote these by $L_{4,5,6,7,8,9}$.
\item \textbf{(II)} There is an analytic, simple closed curve $\mathfrak{D} \subset \mathfrak{S}$
from the $m_1 = m_2$ edge to the $m_2 = m_3$ edge of the simplex.  
The number of libration points is constant throughout $\mathfrak{S}$, except on $\mathfrak{D}$.
We refer to $\mathfrak{D}$ as the \textit{critical parameter curve}.
$\mathfrak{D}$ does not contain any vertex, nor does it intersect the $m_3 = 0$ 
edge of the simplex.  
\item \textbf{(IV)} $\mathfrak{S} \backslash \mathfrak{D}$ has two components which
we denote by $\mathfrak{S}_I$ and $\mathfrak{S}_{II}$.  We take $\mathfrak{S}_I$
to be the component containing the triple Copenhagen vertex $v_0$.  
For each $(m_1, m_2, m_3) \in \mathfrak{S}_I$ the system has 4 
inner libration points, making
ten in total.  For each $(m_1, m_2, m_3) \in \mathfrak{S}_{II}$ with $m_3 > 0$ 
the system has 2 inner libration points, making 8 total.  
\item \textbf{(V)} If $(m_1, m_2, m_3) \in \mathfrak{D}$ and $m_1 \neq m_2$, 
then the system has 3 inner libration points, making for 9 total.  
Denote by $v_{{\tiny \mbox{pf}}} \in \mathfrak{S}$ the point where
$\mathfrak{D}$ intersects the $m_1 = m_2$ edge (the reason for the ``pf'' will 
be made clear below).  When the system has parameters
$v_{{\tiny \mbox{pf}}}$ there are 2 inner libration points, for a total of 8.  
\end{itemize}
The parameter simplex is illustrated schematically in Figure \ref{parameterSimplex}.

\begin{figure}[!t]
\centering
\includegraphics[width=4.5in]{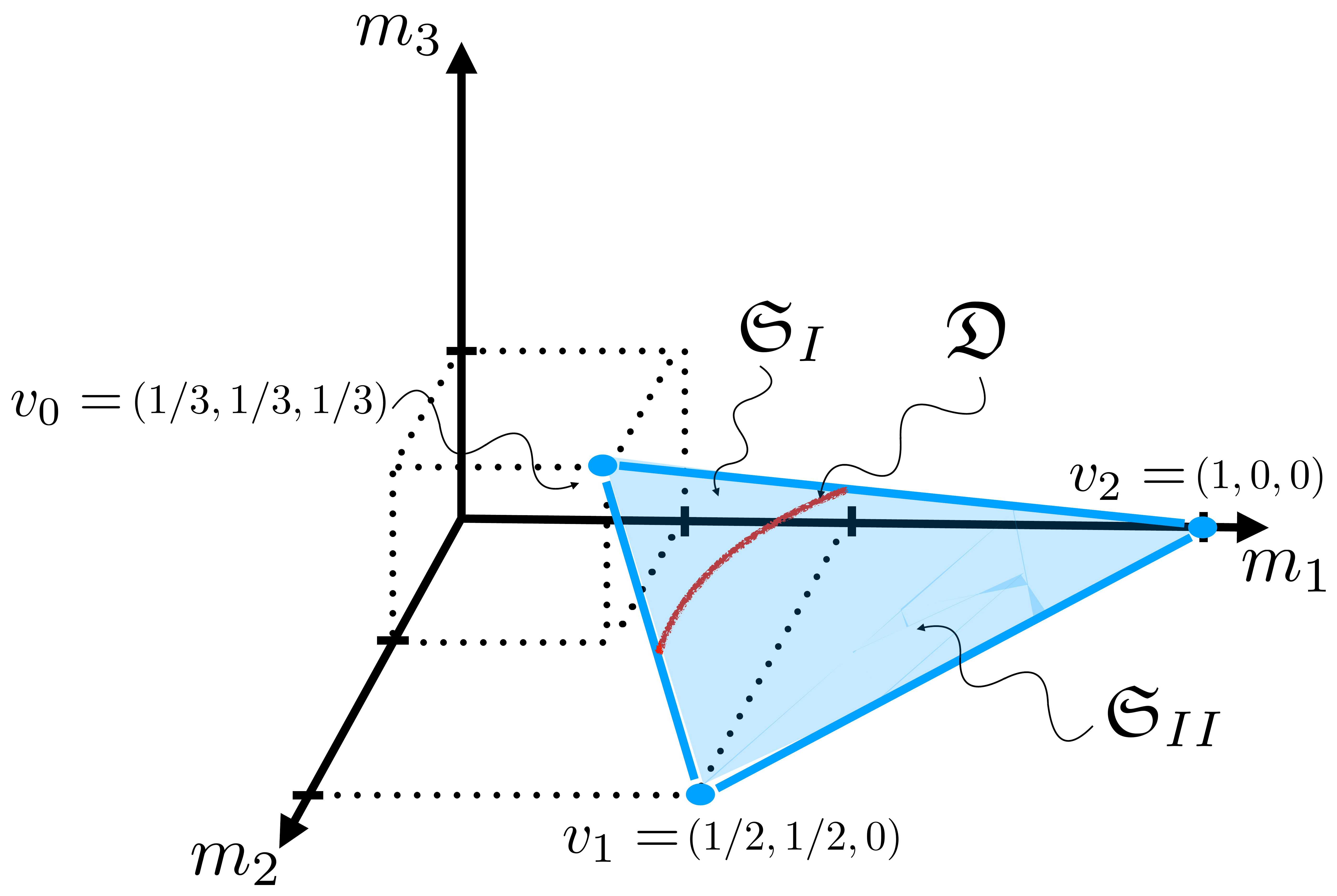}
\caption{\textbf{Parameter simplex for the CRFBP:} normalizing so that 
$m_1 + m_2 + m_3 = 1$ with $m_3 \leq m_2 \leq m_1$ leads to a parameter 
space as depicted in the figure.  The 
simplex is formed by the vertices $v_0 = (1/3, 1/3, 1/3)$ corresponding 
to equal masses (triple Copenhagen problem), $v_1 = (1/2, 1/2, 0)$ corresponding to 
all mass in two equal primaries (restricted three body Copenhagen problem), 
and $v_2 = (1, 0, 0)$ where all the mass is in the largest body (rotating Kepler problem).  
Observe that $m_2 = m_3$ along the edge joining $v_0$ and $v_2$, that 
$m_1 = m_2$ along the edge joining $v_0$ and $v_1$, and that $m_3 = 0$ 
along the edge joining $v_1$ and $v_2$.  The critical parameter 
curve $\mathfrak{D}$ is depicted as a red arc from the $m_1 = m_2$ edge to the 
$m_2 = m_3$ edge, cutting the simplex in two components 
denoted $\mathfrak{S}_I$ and $\mathfrak{S}_{II}$.  In $\mathfrak{S}_I$ the system has 
ten equilibrium solutions and in $\mathfrak{S}_{II}$ only eight.  The number changes only 
on the critical curve $\mathfrak{D}$ where the inner libration point $L_0$ loses hyperbolicity 
and annihilates with $L_2$.  See Figure \ref{rotatingframe} for the approximate locations
of the $8-10$ libration points.
}\label{parameterSimplex}
\end{figure}

Let us elaborate on statements $(IV)$ and $(V)$.  
With  $(m_1, m_2, m_3) \in \mathfrak{S}_I$, denote the 
$4$ inner libration points by $L_{0,1,2,3}$ (as in Figure \ref{rotatingframe}).  
Since there are no bifurcations in $\mathfrak{S}_I$, the locations of 
the libration points vary continuously (even analytically) throughout this region.
In the triple Copenhagen problem the libration point $L_0$ is located at the 
center of mass/origin in state space.  For all parameters in $\mathfrak{S}_I$, 
$L_0$ has saddle type stability (two stable and two unstable eigenvalues)
and the  libration points $L_{1,2,3}$ have saddle $\times$ centers 
stability (one stable, one unstable and a pair of purely imaginary 
conjugate eigenvalues).

Suppose that parameters are
varied continuously from a point in the region $\mathfrak{S}_I$ 
to a point in the region $\mathfrak{S}_{II}$.  
As the parameters cross the critical curve $\mathfrak{D}$ the system undergoes 
a bifurcation involving $L_0$ and $L_2$.
In the general case that $m_1 \neq m_2$, 
the bifurcation is a Hamiltonian saddle node 
wherein $L_0$ and $L_2$ collide and 
annihilate.  At $v_{{\tiny \mbox{pf}}}$ -- where $\mathfrak{D}$
intersects the $m_1 = m_2$ edge -- the bifurcation is a Hamiltonian pitchfork bifurcation involving 
$L_0$, $L_2$, and $L_3$.   Again, $L_0$ and $L_2$ vanish in this pitchfork bifurcation, 
so that in every case it is only the inner libration points $L_1$ and $L_3$ which
remain once $(m_1, m_2, m_3) \in \mathfrak{S}_{II}$. 
We write $L_{{\tiny \mbox{sn}}}$ to denote the libration point
at saddle node bifurcation, and $L_{{\tiny \mbox{pf}}}$ for the pitchfork.
We sometimes write $L_c$ to denote a critical libration point without 
specifying whether we are at the pitchfork or a saddle node bifurcation.

Given that the equilibrium structure of the CRFBP is completely understood,
one natural line of inquiry is to study orbits homoclinic to the equilibria,
and any possible bifurcations of these homoclinic connecting orbits.  
For example, bifurcations at $L_0$ when the system crosses the 
critical curve $\mathfrak{D}$ must trigger corresponding bifurcations for 
orbits homoclinic to $L_0$.  Some studies which consider heteroclinic 
and homoclinic connections in the CRFBP are
Delgado and Burgos \cite{MR3105958}, and  
Baltagiannis and Papadakis\cite{papadakisPO_likeUs}.  
The present work builds on  the 
recent study of homoclinic phenomena in the triple 
Copenhagen problem by Kepley and Mireles James \cite{MR3919451}.

Relevant results from \cite{MR3919451} are reviewed in Section \ref{sec:copenhagenHomoclinics}.
What is important for the purposes of the present introduction is 
that there are six basic homoclinic motions at triple Copenhagen $L_0$, denoted by 
$\gamma_{1,2,3,4,5,6} \colon \mathbb{R} \to \mathbb{R}^4$ (see
Figure \ref{fig:basicHomoclinics}), and that these basic connections
appear to organize all observed homoclinic phenomena at $L_0$.
Given the importance of $\gamma_{1,2,3,4,5,6}$ in the triple Copenhagen problem  
it is natural to investigate their role as parameters
vary.  This leads to some fairly delicate questions about the global dynamics.
The present work focuses primarily on the ``shortest'' homoclinics 
$\gamma_{1,2,3}$.
\begin{itemize}
\item \textbf{Question 1:}   The connections $\gamma_{1,2,3}$
are transverse (in the Hamiltonian sense), and hence
persist for parameter values $(m_1, m_2, m_3) \approx v_0$.  
\textit{What happens to $\gamma_{1,2,3}$ 
as the mass parameters move throughout $\mathfrak{S}_1$ toward $\mathfrak{D}$?}
In particular, how robust are the connections?  Do any of the connections survive all the way to 
$\mathfrak{D}$?
\item \textbf{Question 2:} With $(m_1, m_2, m_3) \in \mathfrak{D}$
consider the critical libration point $L_c$. \textit{Are there homoclinic connections
to the critical libration point?}  How are they related to the basic homoclinic motions
$\gamma_{1,2,3}$  of the triple Copenhagen problem?
\end{itemize}

The CRFBP has a conserved, energy-like quantity
known as the Jacobi integral (see Section \ref{sec:equationsOfMotion}). 
Systems with first integrals enjoy an intimate relationship between homoclinic 
orbits and one parameter families of periodic orbits. 
This connection was first studied by Str\"{o}mgren in connection with the 
(CRTBP) (see for example \cite{stromgrenRef}) where it was observed
that some planar families of Lyapunov periodic orbits appear to accumulate
to ``asymptotic periodic orbits'' -- heteroclinic cycles or 
homoclinic orbits in modern terminology.  This phenomenon involves a global 
bifurcation made precise by the ``blue sky catastrophe'' of Shilnikov, Henrard,  
and Devaney \cite{MR0365628,MR0431274,MR3105958,MR3253906}.

It was observed in \cite{MR3919451} that each
of the short homoclinics $\gamma_{1,2,3}$ 
participates in a blue sky catastrophe,
appearing as the limit of the
planar Lyapunov family associated with the inner libration it winds around.
 More precisely, the planar family of periodic orbits attached to
$L_{i}$ accumulates to $\gamma_{i}$, for $i = 1,2,3$. 
This leads to a third question concerning the phase space structure of the CRFBP
at criticality.  
\begin{itemize}
\item \textbf{Question 3:} For $(m_1, m_2, m_3) \in \mathfrak{D}$, 
what is the asymptotic fate of the planar Lyapunov families 
attached to $L_1$ and $L_3$?   Do these families participate in 
blue sky castrophies with critical homoclinic orbits at $L_{c}$?  If 
not, where do they accumulate?  (Recall that $L_2$ has collided 
with $L_0$ on the critical curve, so that question only makes sense
for $L_{1,3}$).
\end{itemize}

These three questions are the main topic of the present 
study, and are addressed using tools from computational dynamics.  
In particular we apply
numerical continuation methods for periodic/homoclinic orbits, 
as well as high order numerical methods for computing 
invariant manifolds attached to libration points.  
These topics are standard and have been discussed at length in 
other places, and we provide some references when appropriate 
below.  We include, for the sake of completeness,
a short overview of numerical continuation schemes
in conservative system as Appendix \ref{sec:appendixContinuation}. 
We also employ high order methods for computing 
center and center stable/unstable manifolds, as well as normal form 
calculations, to illuminate the dynamics at $\mathfrak{D}$.
We will see that the three questions are interrelated, so that understanding 
any one of them provides information about the other two.  

More precisely we are guided by the following qualitative observations, whose judicious 
use leads to quantitate information about bifurcations of the homoclinics orbits.
For numerical continuations we will always consider parameter curves 
starting at $v_0$ and terminating at the critical curve $\mathfrak{D}$.  
This leads to one parameter 
continuation problems, so that bifurcations then occur at isolated parameter points, 
and we can talk about what happens before and after such a bifurcation point.

\begin{itemize}
\item \textbf{Approximating bifurcation parameters from below -- robustness of numerical continuation:} 
suppose that we vary the mass parameters of the system from $v_0$ toward the critical curve $\mathcal{D}$,
and apply a numerical continuation scheme to one of the homoclinic orbits $\gamma_{1,2,3}$.
Then a breakdown in the numerical continuation scheme indicates a possible bifurcation 
of the homoclinic family.  Breakdown is indicated by the loss of invertibility (or poor numerical conditioning) of 
a certain matrix.   The numerical scheme will breakdown before 
the bifurcation, so that the blow up of the condition number provides a useful 
lower bound on the approximate location of
the bifurcation parameter. This is the topic of Section \ref{sec:numCont}.
\item \textbf{Approximating bifurcation parameters from above -- the blue sky test:}  as mentioned above,
the ``tubes'' of planar Lyapunov periodic orbits 
originating at $L_{1,2,3}$ in the triple Copenhagen problem accumulate to the interior 
homoclinic orbits $\gamma_{1,2,3}$.  
This phenomena is robust,
and hence persists for small changes in parameters.
Then studying the limiting behavior of a Lyapunov tube 
provides another qualitative feature that can only 
change at a bifurcation of the homoclinic orbit.  
Numerically locating a 
Lyapunov tube which no longer accumulates to $\gamma_{1,2,3}$
suggests that we have passed a bifurcation of the homoclinic, and provides
a useful geometric mechanism for obtaining upper bounds on the bifurcation parameter.
This is the topic of Section \ref{sec:blueSkies}.
\item \textbf{Bifurcations on the critical curve -- normal form calculations:}  
in some cases the continuation robustness and/or blue sky tests are inconclusive. 
In particular, the tests have a difficult time distinguishing bifurcations which occur near, 
but not on, the critical curve $\mathcal{D}$.  In this case it is helpful to examine the normal form 
at the critical equilibrium solution $L_c$, as it provides local information about connecting orbits.
It can also be useful to numerically study intersections of the center stable/center unstable 
manifolds in the $L_c$ level set of the Jacobi integral. 
This is the topic of Section \ref{sec:normalform}.
\end{itemize}

\noindent Using these numerical techniques in concert provides a novel approach to the qualitative 
study of global continuation and bifurcation properties of connecting orbits in conservative systems.

The remainder of the paper is organized as follows.
In sections \ref{sec:equationsOfMotion} to \ref{sec:bifurcations}
we review the equations of motion, a numerical method for computing critical equilibria/parameter 
sets, results about homoclinic motions in the triple Copenhagen problem, as well as the literature 
on homoclinic bifurcations.
In Section \ref{sec:numCont} we study the $\gamma_{1,2,3}$ families via numerical 
continuation algorithms, while Section \ref{sec:blueSkies} is devoted to blue sky 
catastrophes.  In Section \ref{sec:normalform} we study the dynamics on 
$\mathfrak{D}$ using numerically computed center stable/unstable manifolds
and normal forms.  Our conclusions are summarized in Section \ref{sec:conclusions},
and Appendices \ref{sec:appendixContinuation} and \ref{sec:centerCalcAppendix} provide details on 
multiple shooting/continuation schemes and computing 
the center as well as center stable/unstable manifolds.

\begin{figure}[!t]
\centering
\includegraphics[width=4.5in]{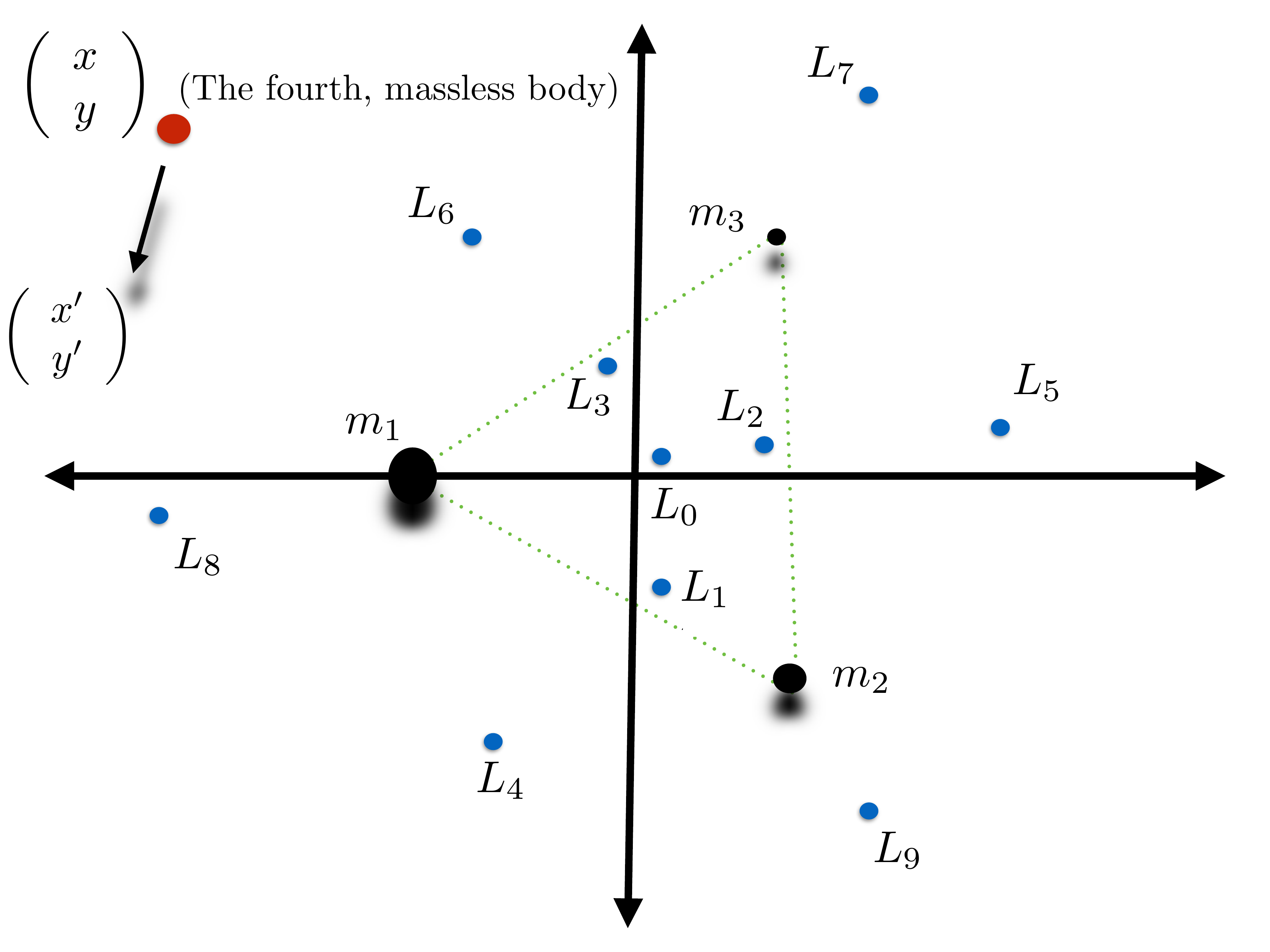}
\caption{\textbf{Configuration space for the CRFBP:} The three 
primary bodies with masses $m_1,m_2,$ and $m_3$ are 
arranged in an equilateral triangle configuration of 
Lagrange, which is  a relative equilibrium solution of the 
three body problem.    After transforming to a
co-rotating frame, we consider the motion of a fourth
massless body.  The equations of motion have 
$8$, $9$, or $10$ equilibrium solutions
(libration points) denoted by $L_j$
for $0 \leq j \leq 9$.  The number of libration points,
and their stability, vary depending on $m_1$, $m_2$, and $m_3$. 
The points $L_{0, 4,5,6}$
have saddle focus stability for some values of the masses.
The other libration points have either saddle $\times$ center
or center $\times$ center stability type for all values of the 
masses.  
}\label{rotatingframe}
\end{figure}

\subsection{CRFBP: equations of motion and basic properties} \label{sec:equationsOfMotion}
Define
\[
K = m_2(m_3 - m_2) + m_1(m_2 + 2 m_3).
\]
The locations $(x_1, y_1)$, $(x_2, y_2)$ and $(x_3, y_3)$ of the 
three primary bodies are given by 
\begin{align*}
x_1 &=   \frac{-|K| \sqrt{m_2^2 + m_2 m_3 + m_3^2}}{K},  
&
y_1 &=   0,  
\\
x_2 &=  \frac{|K|\left[(m_2 - m_3) m_3 + m_1 (2 m_2 + m_3)  \right]}{
2 K \sqrt{m_2^2 + m_2 m_3 + m_3^2} }, 
&
y_2  &=  \frac{-\sqrt{3} m_3}{2 m_2^{3/2}} \sqrt{\frac{m_2^3}{m_2^2 + m_2 m_3 + m_3^2}},
\\
x_3 &=  \frac{|K|}{2 \sqrt{m_2^2 + m_2 m_3 + m_3^2}},
&
y_3 &=  \frac{\sqrt{3}}{2 \sqrt{m_2}} \sqrt{\frac{m_2^3}{m_2^2 + m_2 m_3 + m_3^2}}.
\end{align*}
Let  
\begin{equation} \label{eq:CRFBP_potential}
\Omega(x,y) \bydef
\frac{1}{2} (x^2 + y^2) + \frac{m_1}{r_1(x,y)} + \frac{m_2}{r_2(x,y)} + \frac{m_3}{r_3(x,y)}, 
\end{equation}
where 
\begin{equation} \label{eq:def_r}
r_j(x,y) \bydef \sqrt{(x-x_j)^2 + (y-y_j)^2},  \quad \quad \quad j = 1,2,3,
\end{equation}
and write $\mathbf{x} = (x, \dot x, y, \dot y) \in \mathbb{R}^4$ to denote the
state of the system.   
The equations of motion in the co-rotating frame are given by 
\[
\mathbf{x}' = f(\mathbf{x}),
\]
where
\begin{equation}\label{eq:SCRFBP}
 f(x, \dot x, y, \dot y) \bydef
\left(
\begin{array}{c}
\dot x \\
2 \dot y + \frac{\partial \Omega}{\partial x} \\
\dot y \\
-2 \dot x + \frac{\partial \Omega}{\partial y} \\
\end{array}
\right).
\end{equation}
Observe that $\Omega$, and hence $f$, depend in a complicated 
way on the masses $m_1, m_2, m_3$ through the positions 
$(x_i, y_i)$, $i = 1,2,3$ of the primaries.  

The system conserves the \textit{Jacobi integral}  
\begin{eqnarray} 
E(x, \dot x, y, \dot y) &= 
-\left( {\dot x}^2 + {\dot y}^2 \right) + 2\Omega(x,y).
 \label{eq:energy}
\end{eqnarray}
Assuming that $(m_1, m_2, m_3) \in \mathfrak{S}_I$, the libration points
(equilibrium solutions of Equation \eqref{eq:SCRFBP}) are 
arranged as illustrated in the schematic of Figure \ref{rotatingframe}.
If $(m_1, m_2, m_3) \in \mathfrak{S}_{II}$, then the equilibrium
solutions are as in the schematic, except that $L_0$ and $L_2$ are not
present.

There is a substantial literature on the CRFBP, and in addition to the references
cited above we refer the interested reader also to the works of Baltagiannis and Papadakis 
\cite{MR2845212}, and \'{A}lvarez-Ram\'{i}erz and Vidal \cite{MR2596303} 
where many of the systems basic properties are discussed in detail.
Elementary families of periodic orbits are considered by 
Papadakis in \cite{MR3571218,MR3500916},
and by Burgos-Garc\'{i}a, Bengochea,  and Delgado in 
\cite{burgosTwoEqualMasses,MR3715396}.
A study by Burgos-Garc\'{i}a, Lessard, and Mireles James proves the existence 
of a number of spatial periodic orbits for the CRFBP \cite{jpJaimeAndMe}
(again with computer assistance).
An associated Hill's problem is derived and its periodic orbits are studied 
by Burgos-Garc\'{i}a and Gidea in \cite{MR3554377,MR3346723}.
Murray and Mireles James study transverse homoclinic chaos in the spatial 
CRFBP, for certain vertical Lyapunov families of periodic orbits
attached to the inner and outer libration points \cite{maximeMurray_connectionsVerticalLyap}.

Regularization of collisions are studied by \'{A}lvarez-Ram\'{i}rez, Delgado, and Vidal in 
\cite{MR3239345}.  Chaotic motions were studied numerically by Gidea and Burgos 
in \cite{MR2013214}, and by \'{A}lvarez-Ram\'{i}rez and Barrab\'{e}s in \cite{MR3304062}.
Perturbative proofs of the existence of chaotic motions are
found in the work of She, Cheng and Li
\cite{MR3626383,MR3158025,MR3038224},
and also in the work of 
Alvarez-Ram\'\i rez,  Garc\'{a},  Palaci\'{a}n,  and Yanguas
\cite{chaosCRFBP}.
A computer assisted method of proof for establishing the existence of chaotic 
motions at non-perturbative parameter values was developed by 
Kepley and Mireles James in \cite{MR3906230}.

\subsection{Equilibria: the critical case } \label{sec:criticalNumerics_firstOrder}
In the following discussion we treat $m_1$, $m_2$ as free parameters, 
eliminating the parameter $m_3$ via the equation 
$m_3 = 1 - m_1 - m_2$. To characterize the critical curve we seek 
values of $m_1$ and $m_2$ such that $D f$ is non-invertible at 
one of its Lagrangian points. The following proposition tells us in terms of 
the second derivatives of 
$\Omega$ whether or not $D \dynamics$ is invertible, and is used 
to (numerically) compute the bifurcation curve $\mathfrak{D}$. 
We also recover that a critical libration point always has a double 
zero eigenvalue.  See Figure \ref{fig:m1versusm2}.

\begin{proposition}[Bifurcation value]\label{thm: bifurcationpoint} The linearization 
of $f$ at a libration point $L =(x_0,0,y_0,0)$ has an eigenvalue 
$0$ iff  $\Omega_{xx}(x_0,y_0) \Omega_{yy}(x_0,y_0) = \Omega_{xy}^2(x_0,y_0)$.  
 In particular,
\begin{itemize} 
\item  \textit{If $L_c$ is a critical libration point for the CRFBP,
 then $Df(L_c)$ has eigenvalue $0$ with algebraic multiplicity $2$ and geometric multiplicity 
 $1$.} 
\end{itemize}
The eigenvector and generalized eigenvector corresponding to the eigenvalue $0$ are
\begin{align*}
\bv_0 &\bydef (\Omega_{yy}(x_0,y_0),0, - \Omega_{xy}(x_0,y_0) , 0), \\
\bv_1 &\bydef (\Omega_{xy}(x_0,y_0),\Omega_{yy}(x_0,y_0), 2 - \Omega_{xx}(x_0,y_0), - \Omega_{xy}(x_0,y_0))
\end{align*}
\end{proposition}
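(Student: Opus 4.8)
The plan is to reduce the entire statement to a direct computation with the $4\times 4$ matrix $Df(L)$ and its characteristic polynomial. First I would write down the linearization at a libration point $L = (x_0,0,y_0,0)$. Since $L$ is an equilibrium we have $\dot x = \dot y = 0$ and $\Omega_x = \Omega_y = 0$ there, so differentiating \eqref{eq:SCRFBP} gives
\[
Df(L) = \begin{pmatrix} 0 & 1 & 0 & 0 \\ \Omega_{xx} & 0 & \Omega_{xy} & 2 \\ 0 & 0 & 0 & 1 \\ \Omega_{xy} & -2 & \Omega_{yy} & 0 \end{pmatrix},
\]
with all second partials evaluated at $(x_0,y_0)$ and using $\Omega_{yx}=\Omega_{xy}$. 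A cofactor expansion along the first row (whose only nonzero entry is the $1$ in the second column) collapses $\det Df(L)$ to a single $3\times 3$ minor, which evaluates to $\det Df(L) = \Omega_{xx}\Omega_{yy} - \Omega_{xy}^2$. Since $0$ is an eigenvalue exactly when $\det Df(L)=0$, this establishes the stated equivalence.

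For the ``in particular'' claim I would compute the full characteristic polynomial. Expanding $\det(Df(L)-\lambda I)$ and simplifying (the cross terms linear in $\Omega_{xy}\lambda$ cancel) yields the even polynomial $p(\lambda) = \lambda^4 - (\Omega_{xx}+\Omega_{yy}-4)\lambda^2 + (\Omega_{xx}\Omega_{yy}-\Omega_{xy}^2)$. At a critical libration point the constant term vanishes by the first part, so $p(\lambda) = \lambda^2\bigl(\lambda^2 - (\Omega_{xx}+\Omega_{yy}-4)\bigr)$ and $0$ is a root of algebraic multiplicity at least two. To pin down the Jordan structure I would then verify the two displayed vectors directly: a short computation shows $Df(L_c)\bv_0 = 0$, where only the second component requires the criticality relation $\Omega_{xx}\Omega_{yy}=\Omega_{xy}^2$ and the others vanish identically, so $\bv_0$ spans a kernel direction. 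A second computation gives $Df(L_c)\bv_1 = \bv_0$, with criticality used in the fourth component, exhibiting a genuine length-two Jordan chain $\{\bv_0,\bv_1\}$.

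Assembling these, the Jordan chain forces algebraic multiplicity at least two and, because $Df(L_c)\bv_1=\bv_0\neq 0$ means the generalized eigenspace is not diagonal, geometric multiplicity exactly one; the factorization of $p$ caps the algebraic multiplicity at two as soon as the remaining factor does not also vanish at the origin. The main obstacle is therefore the nondegeneracy condition $\Omega_{xx}+\Omega_{yy}\neq 4$ (equivalently, the eigenvalue pair $\pm\sqrt{\Omega_{xx}+\Omega_{yy}-4}$ stays off zero), together with $\bv_0\neq 0$; these are the only two facts not following from pure linear algebra. I would dispatch them using the explicit critical libration points lying on $\mathfrak{D}$: since $\Omega_{xx}+\Omega_{yy} = 2 + \sum_j m_j/r_j^3 > 2$ from the potential, the coincidence $\Omega_{xx}+\Omega_{yy}=4$ is the one case to be excluded at the actual critical parameters, and $\bv_0\neq 0$ holds whenever $(\Omega_{yy},\Omega_{xy})\neq(0,0)$ at $L_c$. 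Verifying these at the computed critical points is the delicate, problem-specific step, while everything else is routine.
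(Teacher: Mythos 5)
Your proposal is correct and follows essentially the same route as the paper's proof: compute the characteristic polynomial $\lambda^4 + (4-\Omega_{xx}-\Omega_{yy})\lambda^2 + \Omega_{xx}\Omega_{yy}-\Omega_{xy}^2$, read the zero-eigenvalue condition off the constant term, and verify the Jordan chain $Df(L_c)\bv_0 = \bzero$, $Df(L_c)\bv_1 = \bv_0$ by direct calculation. The two nondegeneracy facts you isolate ($\Omega_{xx}+\Omega_{yy}\neq 4$ and $\bv_0 \neq \bzero$, i.e.\ $\Omega_{yy}\neq 0$ since criticality forces $\Omega_{xy}=0$ whenever $\Omega_{yy}=0$) are exactly the content of the paper's preliminary Claim, which it likewise defers to numerical verification along $\mathfrak{D}$ and to the rigorous results of Barros and Leandro.
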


We start with a preliminary claim:

\begin{claim}Let $\boldx_0=(x_0,0,y_0,0)$ be a libration point 
of $f$. If $\Omega_{xx}(x_0,y_0) \Omega_{yy}(x_0,y_0) = \Omega_{xy}^2(x_0,y_0)$, 
then both $\Omega_{yy}(x_0,y_0)$ and $4 - \Omega_{xx}(x_0,y_0) - \Omega_{yy}(x_0,y_0)$ 
are non-zero.
\end{claim}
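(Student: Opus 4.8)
The plan is to reduce everything to the $2\times 2$ Hessian $H$ of $\Omega$ in the configuration variables $(x,y)$, evaluated at $(x_0,y_0)$, since the hypothesis $\Omega_{xx}\Omega_{yy}=\Omega_{xy}^2$ is exactly the statement $\det H = 0$. First I would record the single identity that drives both halves of the claim. Writing $\Omega=\tfrac12(x^2+y^2)+\sum_j m_j/r_j$ and using that the planar Laplacian of $1/r_j$ equals $1/r_j^3$ gives
\[
\Omega_{xx}+\Omega_{yy} = 2 + \sum_{j=1}^{3}\frac{m_j}{r_j^{3}}.
\]
Since the $m_j$ are positive and a libration point never coincides with a primary, $s := \sum_j m_j/r_j^3 > 0$, so the trace of $H$ is $2+s>2$. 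Together with $\det H = 0$ this pins down the spectrum: the eigenvalues of $H$ are $0$ and $2+s>0$, so $H$ is positive semidefinite of rank one, and in particular $\Omega_{xx}\ge 0$ and $\Omega_{yy}\ge 0$.

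For $\Omega_{yy}\neq 0$ I would argue from this rank-one PSD structure. Write $H=(2+s)\,\hat n\hat n^{T}$ for a unit eigenvector $\hat n$ of the nonzero eigenvalue; then $\Omega_{yy}=(2+s)\hat n_y^2$, so $\Omega_{yy}=0$ is equivalent to $\hat n=(\pm1,0)$, i.e.\ to the kernel of $H$ being exactly the vertical axis, with $\Omega_{xy}=0$ as well (a vanishing diagonal entry of a PSD matrix forces its whole row and column to vanish). The kernel of $H$ is the degenerate/center direction along which the two colliding equilibria merge. For the reflection-symmetric critical points (those on the $m_2=m_3$ edge, where the configuration is symmetric in the $x$-axis) the colliding points $L_0,L_2$ lie on the $x$-axis and annihilate along it, so the kernel is the \emph{horizontal} axis, giving $\Omega_{xx}=0$ and $\Omega_{yy}=2+s\neq0$. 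This is also the reason the eigenvector is written as $\bv_0=(\Omega_{yy},0,-\Omega_{xy},0)$ rather than in terms of $\Omega_{xx}$: it is $\Omega_{yy}$, not $\Omega_{xx}$, that survives across the symmetric case.

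For $4-\Omega_{xx}-\Omega_{yy}\neq 0$ I would use the identity above to rewrite the quantity as $2-s$ and connect it to the characteristic polynomial. A direct expansion gives
\[
\det(Df-\lambda I)=\lambda^4+(4-\Omega_{xx}-\Omega_{yy})\lambda^2+(\Omega_{xx}\Omega_{yy}-\Omega_{xy}^2),
\]
which at a critical point ($\det H=0$) factors as $\lambda^2\bigl(\lambda^2+(4-\Omega_{xx}-\Omega_{yy})\bigr)$. Thus $4-\Omega_{xx}-\Omega_{yy}\neq 0$ is precisely the statement that the zero eigenvalue has algebraic multiplicity two rather than four, i.e.\ $s\neq 2$. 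In the same spirit, computing the left null vector $\mathbf{u}$ of $Df$ and pairing it with the candidate generalized eigenvector $\bv_1=(\Omega_{xy},\Omega_{yy},2-\Omega_{xx},-\Omega_{xy})$ yields
\[
\mathbf{u}\cdot\bv_1 = -\,\Omega_{yy}\,(4-\Omega_{xx}-\Omega_{yy}),
\]
so the two factors named in the claim are exactly the obstruction to extending the Jordan chain beyond length two; proving the product is nonzero is the same as proving both clauses.

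The hard part will be the \emph{strict} inequalities. Soft positivity only yields $\Omega_{xx},\Omega_{yy}\ge 0$ and, via $H_U=3G-sI$ with $G\succeq 0$, the lower bound $s\ge 1$; neither $\Omega_{yy}>0$ nor $s\neq 2$ follows from the degeneracy and positivity of the masses alone, since an abstract rank-one PSD Hessian may have the vertical axis as kernel and may have $s=2$. Ruling these out genuinely uses the specific critical configurations: the equilateral constraint (primaries at mutual distance $1$, centroid at the origin), the equilibrium relation $(s-1)(x_0,y_0)=\sum_j (m_j/r_j^3)(x_j,y_j)$, and either the reflection symmetry along the relevant edge or the known classification of $\mathfrak{D}$ as a curve of simple saddle--node (and one pitchfork) bifurcations. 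I expect the real obstacle to be exactly this step --- excluding the higher-codimension nilpotent ($\lambda^4$) and $\Omega_{yy}=0$ degeneracies along the one-dimensional curve $\mathfrak{D}$ --- which is where the explicit expressions for $(x_j,y_j)$, or a transversality/codimension argument, must be invoked.
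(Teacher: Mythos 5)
Your reductions are sound as far as they go, and I verified the two computations they rest on: the planar Laplacian identity does give $\Omega_{xx}+\Omega_{yy}=2+s$ with $s=\sum_{j}m_j/r_j^3>0$, so at a degenerate libration point the Hessian of $\Omega$ is positive semidefinite of rank one; and pairing the left null vector $\mathbf{u}=(-2\Omega_{xy},\,\Omega_{yy},\,-2\Omega_{yy},\,-\Omega_{xy})$ of $Df(L_c)$ with $\bv_1$ indeed yields $\mathbf{u}\cdot\bv_1=-\Omega_{yy}\,(4-\Omega_{xx}-\Omega_{yy})$, so the two quantities in the claim are exactly the obstruction to extending the Jordan chain beyond length two. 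However, the proposal does not prove the claim, and you say as much yourself: the strict non-vanishing of $\Omega_{yy}$ and of $4-\Omega_{xx}-\Omega_{yy}$ \emph{is} the claim, and your argument establishes the first only at the single reflection-symmetric point of $\mathfrak{D}$ on the $m_2=m_3$ edge (where, consistent with Section \ref{sec:nonGeneric}, it is $\Omega_{xx}$ that vanishes and $\Omega_{yy}=2+s$), and establishes the second nowhere. The concluding appeal to ``the known classification of $\mathfrak{D}$'' is not a step you carry out; and as you correctly anticipate, no soft argument can close it, since a rank-one PSD Hessian whose kernel is the vertical axis, or one with $s=2$, is perfectly consistent with everything you derived from degeneracy and positivity of the masses alone.

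For calibration, the paper does not give an analytical proof either: its stated justification is to compute $\mathfrak{D}$ numerically, plot $\Omega_{yy}$ and $4-\Omega_{xx}-\Omega_{yy}$ along it (right frame of Figure \ref{fig:m1versusm2}), observe that both stay well away from zero, and cite the computer-assisted results of Barros and Leandro for a complete proof. So your gap sits exactly where the paper's does; the difference is that the paper closes it by citation to rigorous computer-assisted work, whereas your plan leaves it open as the acknowledged ``hard part.'' What your approach adds beyond the paper's treatment is a clean spectral reformulation of both clauses --- first clause: the kernel of the Hessian of $\Omega$ is not the vertical axis; second clause: $s\neq 2$, i.e.\ the zero eigenvalue of $Df(L_c)$ has algebraic multiplicity exactly two rather than four --- which would be a good target for an interval-arithmetic verification along $\mathfrak{D}$. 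But as it stands the proposal is a reduction of the claim, not a proof of it.
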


Convincing evidence for the claim is obtained by computing the 
Critical curve as illustrated in Figure \ref{fig:m1versusm2}.
One then plots $\Omega_{yy}(x_0,y_0)$ and 
$4-\Omega_{xx}(x_0,y_0) - \Omega_{yy}(x_0,y_0)$ 
for all $(x_0, y_0)$ in the critical curve and checks that 
they are numerically indeed far from zero. 
A complete proof follows from the results of  
\cite{MR2232439,MR2784870,MR3176322}.

\begin{proof}[of Proposition \ref{thm: bifurcationpoint}]
Writing 
$\Omega_{xx}=\Omega_{xx}(x_0,y_0)$, 
$ \Omega_{yy}= \Omega_{yy}(x_0,y_0)$, 
and $ \Omega_{xy}= \Omega_{xy}(x_0,y_0) $, we have
\begin{align}
p_{Df(L)}(\lambda)	\bydef
\begin{vmatrix}
-\lambda		&1			&0			&0			\\
\Omega_{xx}	&-\lambda	&\Omega_{xy}	&2			\\
0			&0			&-\lambda	&1			\\
\Omega_{xy}	&-2			&\Omega_{yy}	&-\lambda	\\
\end{vmatrix} 
= \lambda^4 + 
\left(4 - \Omega_{xx} - \Omega_{yy} \right) \lambda^2  
+ \Omega_{xx}\Omega_{yy} - \Omega_{xy}^2. \label{eq: CharacteristicPolynomial}
\end{align}
Then $Df(L)$ has an eigenvalue $0$ iff 
$\Omega_{xx}\Omega_{yy} =  \Omega_{xy}^2$. 
From our claim it follows that $0$ is a double 
root as $\Omega_{xx} + \Omega_{yy} \neq 4$.  
Furthermore, from the same claim it also follows that 
$\Omega_{yy} \neq 0$ and hence 
$\bv_0 \neq \bzero$. Direct calculation shows 
$Df(\boldx_0) \bv_0 = \bzero$ and $Df(\boldx_0) \bv_1 = \bv_0$.
\end{proof}

\begin{figure}[!t]
\centering
\includegraphics[width = 1.0 \textwidth]{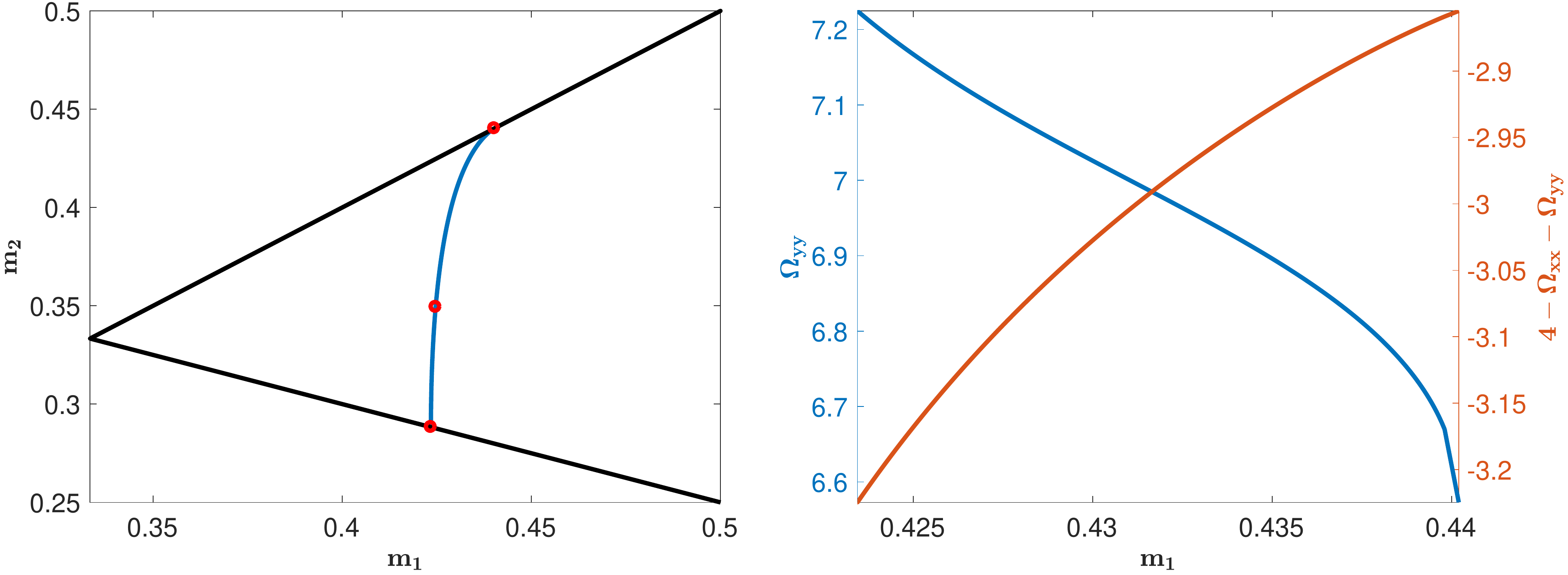}
\caption{\textbf{Critical equilibria:} left frame --  parameter simplex projected into the 
$m_1, m_2$ plane.  The blue curve is the numerically computed critical 
arc $\mathfrak{D}$.  The red dots denote special critical system parameters
considered throughout this work.  The lower red dot depicts the intersection of 
$\mathfrak{D}$ with the $m_2 = m_3$ edge of the simplex.  The upper red dot the 
intersection of $\mathfrak{D}$ with the $m_1 = m_2$ edge.  The middle red dot
is an arbitrarily chosen parameter set on $\mathfrak{D}$ interior to $\mathfrak{S}$.
Right frame -- The functions $\Omega_{yy}$ and $4 - \Omega_{xx} - \Omega_{yy}$ for the critical libration point $L_c$ on $\mathfrak{D}$.}
\label{fig:m1versusm2}
\end{figure}

In the introduction we claimed that at $v_{{\tiny \mbox{pf}}}$ there is a Hamiltonian pitchfork bifurcation instead of a Hamiltonian saddle node bifurcation. That is, we claim that the following bifurcations occur on $\mathfrak{D}$.
\begin{claim}Let $v_c = (m_1,m_2,m_3) \in \mathfrak{D}$ be a critical parameter set.
\begin{itemize}
\item If $m_1 \neq m_2$, the libration points $L_0$ and $L_2$ undergo a Hamiltonian saddle node bifurcation.
\item If $m_1 = m_2$, the libration points $L_0$, $L_2$ and $L_3$ undergo a Hamiltonian pitchfork bifurcation.
\end{itemize}
\end{claim}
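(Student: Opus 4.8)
The plan is to reduce the problem to a bifurcation of critical points of the effective potential $\Omega$ and then to run a symmetry-sensitive Lyapunov--Schmidt reduction, reading off the dynamical type from the double-zero Jordan block supplied by Proposition~\ref{thm: bifurcationpoint}. The starting observation is that $\boldx=(x,\dot x,y,\dot y)$ is a libration point of $f$ if and only if $\dot x=\dot y=0$ and $\nabla\Omega(x,y)=\bzero$; hence the number and collision pattern of libration points near a critical point $L_c$ is controlled entirely by the critical points of the two-parameter family $\Omega(\,\cdot\,;m_1,m_2)$. Classifying how those critical points collide, and combining this with the nilpotent $2\times 2$ block at $0$ already established in Proposition~\ref{thm: bifurcationpoint}, will identify the bifurcation as the conservative (Hamiltonian) saddle node or pitchfork.

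For the reduction, fix $v_c\in\mathfrak{D}$ and a smooth one-parameter curve $\epsilon\mapsto(m_1(\epsilon),m_2(\epsilon))$ crossing $\mathfrak{D}$ transversally at $\epsilon=0$, producing a family $\Omega_\epsilon$ with a nearby family of critical points. At $\epsilon=0$ the Hessian $\mathrm{Hess}\,\Omega=\left(\begin{smallmatrix}\Omega_{xx}&\Omega_{xy}\\\Omega_{xy}&\Omega_{yy}\end{smallmatrix}\right)$ has rank one, with kernel spanned by $\mathbf{u}=(\Omega_{yy},-\Omega_{xy})$ (the configuration-space shadow of $\bv_0$) and single nonzero eigenvalue $\Omega_{xx}+\Omega_{yy}$, nonzero by the preliminary Claim. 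Solving $\nabla\Omega_\epsilon=\bzero$ by the splitting lemma along $\mathbf{u}$ then produces a scalar bifurcation equation $g(s,\epsilon)=0$ with $g(0,0)=0$ and $\partial_s g(0,0)=0$. Transversality of the unfolding follows because $\mathfrak{D}$ is the regular zero set of $(m_1,m_2)\mapsto\det\mathrm{Hess}\,\Omega\big(L(m_1,m_2)\big)$ along the libration branch, guaranteed by \cite{MR2232439,MR2784870,MR3176322}.

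When $m_1\neq m_2$ the family is generic and the fold condition $\partial_s^2 g(0,0)\neq 0$ should hold; together with $\partial_\epsilon g(0,0)\neq 0$ this makes $g=0$ a fold, so exactly two critical points ($L_0$ and $L_2$) exist on one side of $\epsilon=0$ and none on the other, colliding at $L_c$. With the nilpotent block this is the Hamiltonian saddle node. When $m_1=m_2$ I would instead restrict to the invariant edge, where the equilateral configuration of the two equal largest masses is symmetric under the reflection $R_0$ through primary $3$ across the perpendicular bisector of the $1$--$2$ side; $\Omega$ is $R_0$-invariant and this lifts to a (reversing) symmetry of $f$. At $L_c=L_{\mathrm{pf}}$, which is $R_0$-fixed, the kernel direction $\mathbf{u}$ is the reflection-odd direction, so the reduced potential is even in $s$ and $g(\,\cdot\,,\epsilon)$ is odd for every $\epsilon$ on the edge. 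Then $\partial_s^2 g\equiv 0$ automatically, the relevant non-degeneracy becomes the cubic $\partial_s^3 g(0,0)\neq 0$ with transversality $\partial_s\partial_\epsilon g(0,0)\neq 0$, and $g=0$ is a pitchfork: a persistent central branch $L_3$ on the symmetry axis together with the $R_0$-symmetric pair $L_0=R_0(L_2)$ merging into it, matching the equilibrium counts in (V). Combined with the nilpotent block this is the Hamiltonian pitchfork.

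The main obstacle is the non-degeneracy: verifying $\partial_s^2 g(0,0)\neq 0$ in the generic case and $\partial_s^3 g(0,0)\neq 0$ in the symmetric case. These are explicit but cumbersome expressions in the third (respectively fourth) derivatives of $\Omega$ at $L_c$, inherited through the mass-dependent primary positions; as with the preliminary Claim, I would evaluate them numerically along $\mathfrak{D}$ and appeal to \cite{MR2232439,MR2784870,MR3176322} for rigor. A secondary point needing care is confirming that at $v_{\mathrm{pf}}$ the kernel vector $\mathbf{u}$ really is the $R_0$-odd direction rather than tangent to the symmetry axis; were it tangent, the reduction would return a degenerate fold instead of a pitchfork, so this alignment must be checked, either numerically or from the known geometry of $L_{0,2,3}$ along the edge.
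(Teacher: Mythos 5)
Your proposal is correct in substance and, remarkably, lands on exactly the same non-degeneracy quantity as the paper, but it gets there by a genuinely different route. You run a static Lyapunov--Schmidt/splitting-lemma reduction of $\nabla\Omega = 0$ along the kernel direction $\mathbf{u} = (\Omega_{yy}, -\Omega_{xy})$, whereas the paper works dynamically: it computes the center manifold by a parameterization method (Lemma \ref{thm: centerstablebranchformalseries}) and compares the numerically computed reduced vector field with the saddle-node and pitchfork normal forms (Section \ref{sec:normalform}, Table 1). The two meet at the same constant: since the Hessian of $\Omega$ annihilates $\mathbf{u}$, your fold coefficient is $\partial_s^2 g(0,0) = D^3\Omega(\mathbf{u},\mathbf{u},\mathbf{u}) = \Omega_{xxx}\Omega_{yy}^3 - 3\Omega_{xxy}\Omega_{yy}^2\Omega_{xy} + 3\Omega_{xyy}\Omega_{yy}\Omega_{xy}^2 - \Omega_{yyy}\Omega_{xy}^3$, which is precisely the paper's $\mathscr{E}$; and your symmetric-case argument ($R_0$-invariance forces $g$ odd in $s$, killing the quadratic term) is the same computation as Section \ref{sec:pitchfork}, where $\Phi(s,t) = \Omega(T^{-1}\Theta(s,t))$ is even in $s$, so $\Phi_{sss}(0,t)=0$ and hence $\mathscr{E}=0$ at the pitchfork parameter $v_{c_1}$. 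Both treatments have the same epistemic status: the paper's is explicitly a sketch that verifies $\mathscr{E}<0$ on $\mathfrak{D}\setminus\{m_1=m_2\}$ and the cubic/transversality coefficients numerically, exactly as you propose to do. What the paper's route buys in addition is dynamical information -- the center-manifold computation exhibits the reduced phase portraits (stable/unstable branches, small homoclinics) that are reused throughout Section \ref{sec:normalform} -- whereas your route, as written, passes from a fold/pitchfork of critical points of $\Omega$ to the named \emph{Hamiltonian} bifurcations by fiat (``with the nilpotent block this is the Hamiltonian saddle node''). Closing that step requires either the conservative normal form theory the paper cites \cite{MR1200201,MR1350541}, or a reduced-vector-field computation like the paper's; it is standard, but it is a step.

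Two details. First, a factual slip in the pitchfork picture: on the $m_1=m_2$ edge the reflection-symmetric pair is $\{L_2, L_3\}$ (this is exactly why $\gamma_{3,1}(s)$ is the mirror image of $\gamma_{2,1}(s)$ in Section \ref{sec:continuation_gamma3}), while $L_0$ lies on the symmetry axis and is the persistent central branch; your assignment ``$L_0 = R_0(L_2)$ with $L_3$ central'' has the roles reversed. (The confusion is understandable, since the paper relabels the surviving axis equilibrium as $L_3$ once parameters enter $\mathfrak{S}_{II}$.) This does not damage your mechanism -- the equivariant reduction goes through verbatim -- but the bookkeeping should be fixed. Second, the alignment issue you flag at the end, whether the kernel is the $R_0$-odd direction or tangent to the symmetry axis, is precisely the dichotomy the paper confronts as $\Omega_{xx} = \lambda\Omega_{xy}$ versus $\Omega_{xx} = -\lambda^{-1}\Omega_{xy}$, and the paper resolves it numerically (by Newton's method on the defining equations of $v_{c_1}$); so your instinct that this cannot be settled by symmetry alone, and must be checked, matches what the authors actually do.
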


We sketch the proof of this claim.
First, we note that, depending on the symmetries of the problem, both the Hamiltonian saddle node 
bifurcations and the Hamiltonian pitchfork bifurcation are co-dimension one. So, for example, one would assume 
that for $v \in \mathfrak{D}$ with neither $m_1 = m_2$ nor $m_2 = m_3$ the bifurcation at $v$ is a 
saddle node bifurcation thanks to lack of symmetry.  Furthermore, as we will see in 
Lemma \ref{thm: centerstablebranchformalseries}, if the constant 
\begin{align*}
\Omega_{xxx} \Omega_{yy}^3 - 3 \Omega_{xxy} \Omega_{yy}^2 \Omega_{xy} 
+ 3 \Omega_{xyy} \Omega_{yy} \Omega_{xy}^2 - \Omega_{yyy} \Omega_{xy}^3  \neq 0,
\end{align*}
at the critical libration point $L_c$, then the dynamics near the critical libration point resembles Figure \ref{fig: branches4247}. We will show numerically that for 
$v_c = (m_1,m_2,m_3) \in \mathfrak{D}$ with $m_1 \neq m_2$ we have
\begin{align*}
\Omega_{xxx} \Omega_{yy}^3 - 3 \Omega_{xxy} \Omega_{yy}^2 \Omega_{xy} 
+ 3 \Omega_{xyy} \Omega_{yy} \Omega_{xy}^2 - \Omega_{yyy} \Omega_{xy}^3 < 0,
\end{align*}
supporting the first part of the claim. 

Moreover, in Section \ref{sec:normalform} we compute the normal form for 
$v_c = (m_1,m_2,m_3) \in \mathfrak{D}$ at the left end point of $\mathfrak{D}$.
That is, at the intersection of $\mathfrak{D}$ with the $m_2 = m_3$ edge.
We then compute the normal form for $v_c = (m_1,m_2,m_3) \in \mathfrak{D}$ when 
$m_1 = 0.4247$, a ``generic point'' in $\mathfrak{D}$. In both cases, we find that the vector field on the center manifold agrees numerically with the normal form of the saddle node bifurcation.
This provides even further support for the first claim.

For the second part of the claim, we analytically show that  
 \begin{align*}
\Omega_{xxx} \Omega_{yy}^3 - 3 \Omega_{xxy} \Omega_{yy}^2 \Omega_{xy} + 3 \Omega_{xyy} \Omega_{yy} \Omega_{xy}^2 - \Omega_{yyy} \Omega_{xy}^3 = 0
\end{align*}
at the libration point $L_c$ for $v_{{\tiny \mbox{pf}}} \in \mathfrak{D}$. Furthermore, in Section 
\ref{sec:normalform} we will also compute the vector field on the center manifold for $v_{{\tiny \mbox{pf}}} \in \mathfrak{D}$ 
which agrees numerically with the normal form of the pitchfork bifurcation, supporting the second part of the claim.

\subsection{Short homoclinic connections in the triple Copenhagen problems} 
\label{sec:copenhagenHomoclinics}

\begin{figure}[!t]
\centering
\includegraphics[width = 1.0 \textwidth]{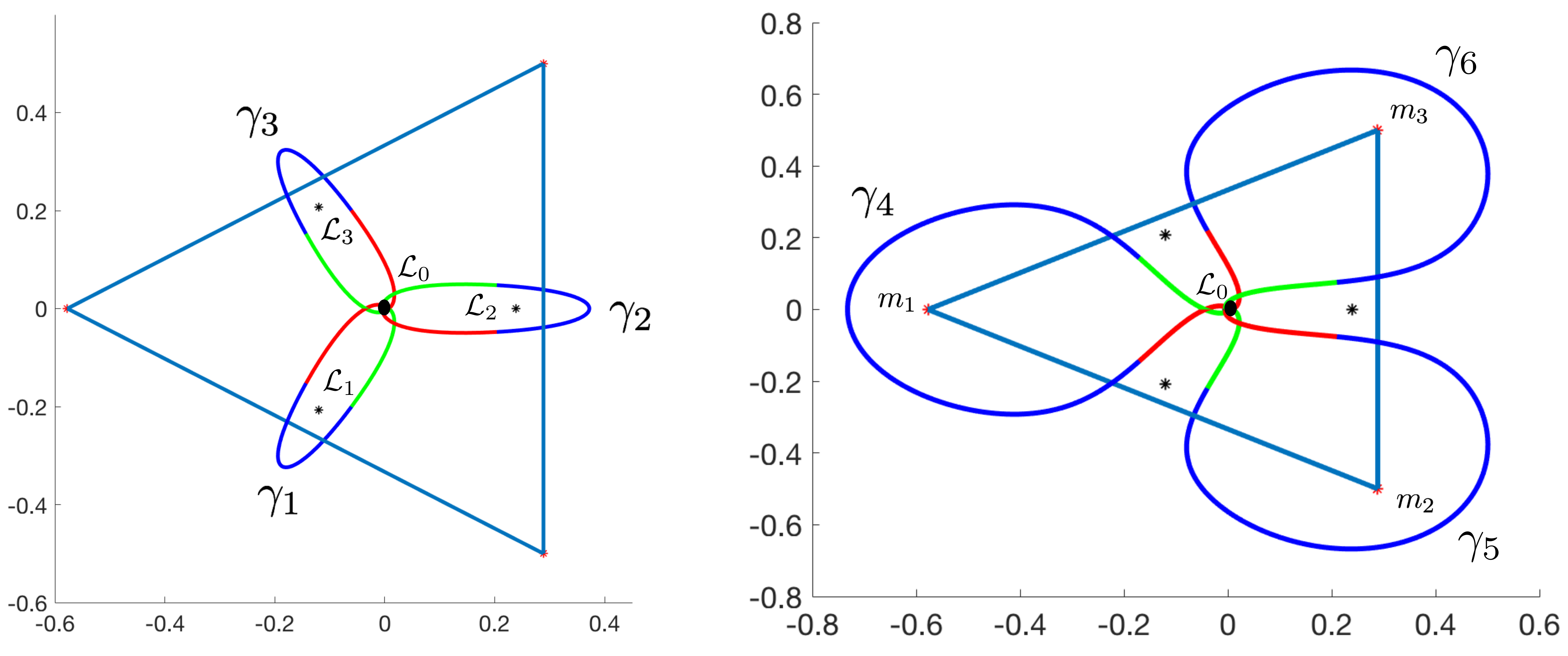}
\caption{\textbf{Fundamental homoclinics at $L_0$:} Left frame -- the three shortest 
homoclinic orbits at $L_0$, which we refer to as $\gamma_{1,2,3}$ 
depending on which libration point they wind around.
Right frame -- the fourth, fifth, and sixth shortest homoclinics at $L_0$, which we refer to as 
$\gamma_{4,5,6}$.  In the triple Copenhagen problem these orbits are related by 
rotational symmetry, however continuation away from equal masses will break this symmetry.
 In both frames the red and green part of the homoclinic is the portion described by the 
 parameterized local unstable and stable manifolds respectively.  The blue portion of the curve 
 is obtained by solving the projected boundary value problem.}
\label{fig:basicHomoclinics}
\end{figure}

Homoclinic connecting orbits for the saddle-focus equilibria in the triple Copenhagen 
problem were studied in detail 
in \cite{MR3919451}.   The main result at $L_0$ is that there are six basic 
homoclinic orbits which appear to organize the full web of connections. 
These are denoted $\gamma_{i} \colon \mathbb{R} \to \mathbb{R}^4$
for $i = 1, \ldots, 6$ and are illustrated in Figure \ref{fig:basicHomoclinics}.
These orbits are the shortest homoclinics, both in terms of arc length and 
time of flight from a local unstable to a local stable invariant manifold.  
The shortest orbits $\gamma_1, \gamma_2, \gamma_3$ each wind once around the 
the libration point $L_1, L_2$ or $L_3$ respectively.  
Indeed, each appears to participate in a blue sky catastrophe with the 
corresponding planar Lyapunov family.

There appear to be infinitely many additional homoclinic orbits ``shadowing'' 
the basic connections in any order we wish.  That is, consider a word 
$\Gamma$ composed of any combination of the letters $\gamma_i$, 
$i = 1, \ldots, 6$.  There is a homoclinic orbits which passes close to 
$\gamma_i$ in the prescribed order.  These results and more are 
discussed in detail in  \cite{MR3919451}.

An important remark is that the results of  \cite{MR3919451} 
show numerically that the homoclinics $\gamma_i$, $i = 1, \ldots, 6$ 
 are transverse in the energy level set of 
$L_0$. Then each of the homoclinic orbits persist for a small change in 
parameter values.  
While some preliminary numerical continuations were 
discussed in \cite{MR3919451}, the calculations were neither 
systematic nor were they taken all the way
to the critical curve $\mathfrak{D}$.

\subsection{Hamiltonian homoclinic bifurcations: classic results} \label{sec:bifurcations}
Homoclinic orbits are fundamental objects of study in dynamical systems theory, 
and there exists a vast literature on their properties, numerical calculation, and 
bifurcations.  Even in the special case of Hamiltonian systems this is a rich 
area and we only recall as much of the theory as pertains directly to the 
present study.  A fantastic overview of this theory with an in depth discussion of the 
literature is found in \cite{MR1605836}.  
We identify five types of global bifurcations involving homoclinic connections in 
Hamiltonian systems.  We state the results for four dimensional vector fields, though 
more general results are found in the references.  

\begin{itemize}
\item \textbf{Type I -- Hamiltonian bi-focus homoclinics:} Suppose that 
$f \colon \mathbb{R}^4 \to \mathbb{R}^4$ is a Hamiltonian vector field 
and that $L \in \mathbb{R}^4$ is an equilibrium solution with complex conjugate
eigenvalues $\pm \alpha \pm i \beta$, $\alpha, \beta > 0$.
Assume that $\gamma \colon \mathbb{R} \to \mathbb{R}^4$ is a transverse homoclinic 
orbit for $L$.  Then there are infinitely many chaotic horseshoes near $\gamma$.
See \cite{MR0442990}. In addition, there is a tube of periodic orbits accumulating to $\gamma$.
See \cite{MR0365628,MR3253906}.  This is the ``blue sky'' catastrophe already 
mentioned above.  
\item \textbf{Type II --Belyakov-Devaney bifurcation:} Suppose that $f(x, \mu)$ is a one parameter
family of Hamiltonian systems, and that for $\mu \in (\mu_0-\epsilon, \mu_0 +\epsilon)$, $L(\mu)$ is 
a libration point for the vector field $f(x, \mu)$.  Suppose that for $\mu < \mu_0$ $L(\mu)$ 
is a bi-focus, and that for $\mu > \mu_0$ $L(\mu)$ has real distinct eigenvalues 
$\pm \alpha,  \pm \beta$, $\alpha, \beta > 0$.
Then $L(\mu_0)$ has real repeated eigenvalues.  Assume that the repeated 
eigenvalues have geometric multiplicity one, and algebraic multiplicity two, and that $\gamma_\mu(t)$ is 
a smooth family of homoclinic connecting orbits for $L(\mu)$.  Then (under some generic 
non-degeneracy assumptions) there are infinitely many homoclinic doubling bifurcations
of $\gamma_{\mu_0}(t)$ at $\mu_0$.  For the precise statement of the theorem and its 
proof see \cite{MR1240679}.
\item \textbf{Type III --Transverse connections to a Hamiltonian saddle node:}
Suppose that $f(x,\mu)$ is a one parameter family of Hamiltonian systems with 
a saddle-node bifurcation at $\mu_0$. To be more precise, let
$L \in \mathbb{R}^4$ denote
the critical libration point and suppose that $L$ has a double zero eigenvalue with 
geometric multiplicity one, algebraic multiplicity two,
and two real eigenvalues $\pm \alpha$.  Moreover, the normal form at $L$ has non-zero 
quadratic term.  Then $L$ has three dimensional center stable and three dimensional 
center unstable manifolds.  The restriction of these three dimensional center stable/unstable
manifolds to the energy level set of $L$ results in a pair of two dimensional invariant manifolds, 
which may intersect transversally in the energy level, giving rise to
a non-degenerate homoclinic orbit $\gamma$.  In general, there will be two families of 
transverse homoclinic orbits which annihilate at $\gamma$, and a family of periodic 
orbits born out of the disappearance of $\gamma$.
The precise statement of the theorem and 
 its proof are found in \cite{MR1967329}.
\item \textbf{Type IV --Degenerate homoclinic orbits in conservative systems:}
this co-dimension one phenomenon 
is studied in \cite{MR1464081,MR2185157}.  
A transverse homoclinic orbit in a one parameter family of Hamiltonian 
systems can be continued until the loss of transversality.
In short, the theorem states that before the bifurcation the generic 
situation is that there is a pair of transverse homoclinic orbits which 
collide and annihilate at criticality.    After the bifurcation the homoclinic families are 
gone.
\item \textbf{Type V-- Degenerate connection to a Hamiltonian saddle node:}
In a two parameter Hamiltonian system a type III homoclinic bifurcation 
can be continued along a one dimension curve in parameter space until 
the critical homoclinic loses transversality.   
In short, the result is that near a 
non transverse critical homoclinic orbit there is a pair of critical transverse homoclinic orbits which 
collide and annihilate.
See \cite{MR1046796,MR1395047} for more complete discussion.
\end{itemize}

In addition, near a Hamiltonian saddle-node or Hamiltonian pitchfork 
bifurcation, normal form analysis provides additional local homoclinic
bifurcations.  For example, near a Hamiltonian saddle node bifurcation there 
are two families of libration points $L_1(\mu)$ and $L_2(\mu)$ which collide
and annihilate at $\mu_0$.  Without loss of generality we have that $L_1(\mu)$
has distinct real eigenvalues and $L_2(\mu)$ has saddle-center stability 
for $\mu < \mu_0$.  Analysis of the normal form shows that there is a family 
of ``small'' homoclinic orbits for $L_1(\mu)$, which wind around $L_2(\mu)$ 
for $\mu < \mu_0$.  This family of homoclinic orbits shrink and disappear 
when  $L_1(\mu)$ and $L_2(\mu)$ collide and annihilate at $\mu_0$.   
See for example \cite{MR1200201,MR1350541}.
Near a Hamiltonian pitch fork a similar normal form analysis 
shows that near the pitch fork there are two families (related by symmetry) 
of ``small'' homoclinic orbits which disappear in the pitch fork bifurcation.  
See again \cite{MR1200201,MR1350541}.

These classical results inform our intuition throughout the remainder of the paper.

\begin{figure}[!t]
\centering
\includegraphics[height = 0.35 \textheight]{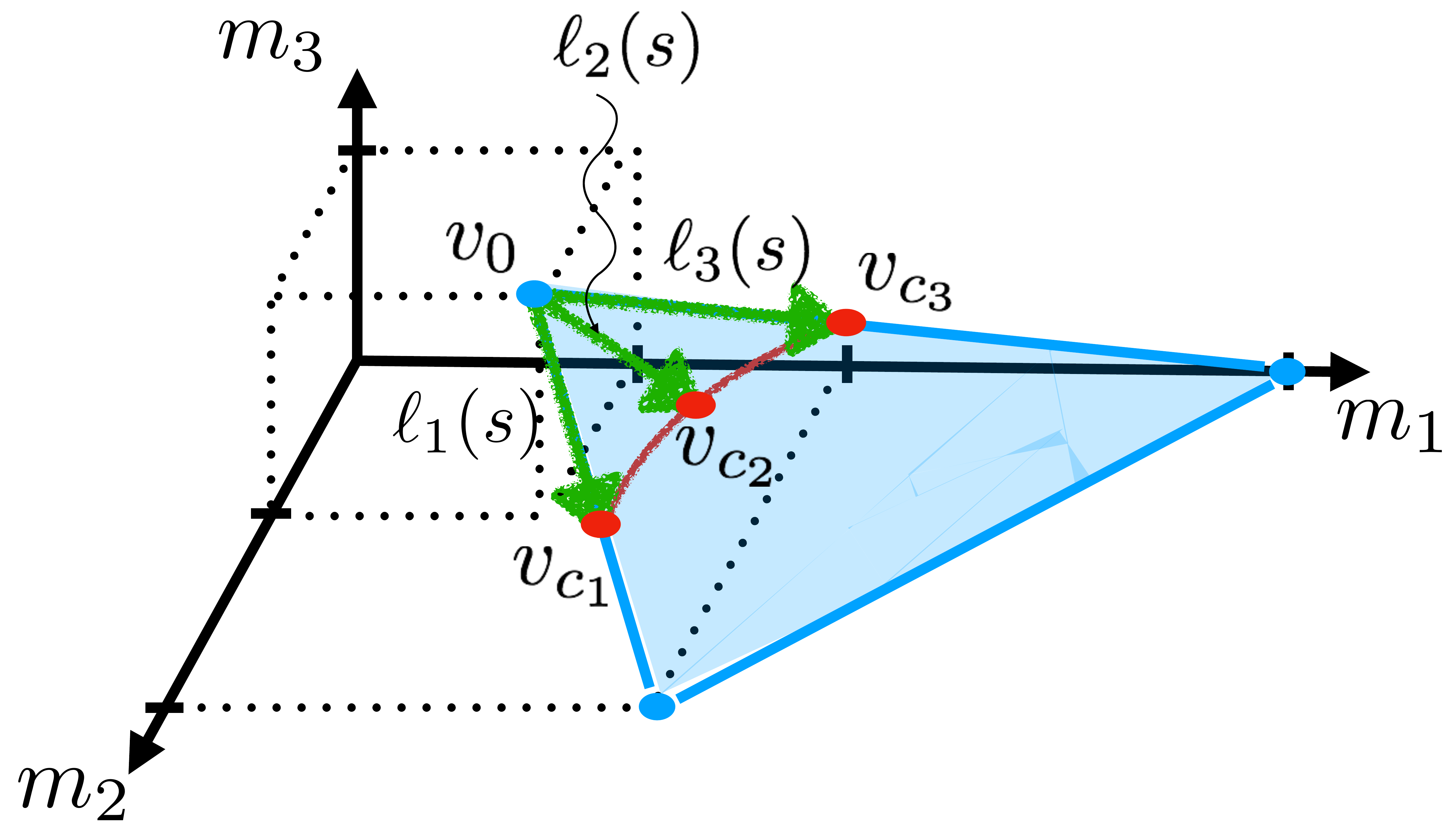} 
\caption{\textbf{Three parameter continuation arcs:} 
consider three critical parameter sets $v_{c_1}, v_{c_2}$, and $v_{c_3}$ on the 
curve $\mathfrak{D}$ and define the lines from $v_0$ to each of these.
We denote the lines by $\ell_1(s)$, $\ell_2(s)$, and $\ell_3(s)$, and 
study the one parameter numerical continuation problem for 
$\gamma_1, \gamma_2$, and $\gamma_3$ on these lines.}
\label{fig:parameterCurves}
\end{figure}

\section{Robustness of the short Copenhagen $L_0$ homoclinics} \label{sec:numCont}
We now consider robustness 
with respect to parameter perturbations
of $\gamma_j$ for $j = 1,2,3$.  Our idea is to 
use classical numerical continuation algorithms for Hamiltonian 
homoclinic connections to study the $\gamma_j$ as the parameters 
of the system are moved away from $v_0 = (1/3, 1/3, 1/3)$. 
Numerical continuation algorithms for periodic and connecting orbits are 
reviewed briefly in Appendix \ref{sec:appendixContinuation} for the sake of completeness, 
however the reader seeking a thorough overview is referred to 
 the classic works of \cite{MR1007358,MR2003792,MR1992054,MR2989589}. 

We fix three lines in parameter space, each starting at $v_0$ and terminating on $\mathfrak{D}$,
so that we obtain three one parameter continuation problems.  
To begin, we numerically compute three critical parameter sets 
$v_{c_1}, v_{c_2}, v_{c_3} \in \mathfrak{D}$ with 
\[
v_{c_1} \approx 
\left( 
\begin{array}{c}
0.440201606048930 \\
0.440201606048930 \\
0.119596787902140
\end{array}
\right), \quad 
v_{c_2} \approx
\left( 
\begin{array}{c}
0.4247   \\
0.349370273506504   \\
0.225929726493496
\end{array}
\right),
\quad 
\mbox{and}
\quad
v_{c_3} \approx
\left( 
\begin{array}{c}
0.423447616433011 \\
0.288276191783495 \\
0.288276191783495
\end{array}
\right).
\]
Here $v_{c_1}$ is on the $m_1 = m_2$, and $v_{c_3}$ on the 
$m_2 = m_3$ parameter edges respectively. The parameters $v_{c_2}$ 
are taken near ``the middle'' of the critical curve, with $m_1 = 0.4247$
fixed somewhat arbitrarily.  
Define the parameter lines 
\[
\ell_{k}(s) = (1-s) v_0 + s v_{c_{k}},
\]
where $k = 1,2,3$.
The critical parameters and parameter curves are 
illustrated schematically in Figure \ref{fig:parameterCurves}.

Since the homoclinic orbits $\gamma_{j}$ for $j = 1,2,3$ are 
transverse in the $L_0$ energy level set of the triple Copenhagen 
problem,  each persists under small changes in the parameters. 
We write $\gamma_{j, k}(s)$ to denote the one parameter family
of homoclinic orbits obtained by parameter continuation of $\gamma_j$ 
along the parameter line $\ell_k(s)$. 
Following the discussion in Section \ref{sec:bifurcations}, 
a homoclinic connection can breakdown/disappear only under one 
of the two following scenarios:
\begin{itemize}
\item (A) loss of transversality, or 
\item (B) disappearance of the underlying equilibrium solution itself.
\end{itemize}
Note that in scenario (A) the equilibrium solution may persist 
after the homoclinic disappears, and that scenario (B) occurs only 
at $\mathfrak{D}$.  So for a given $1 \leq j,k \leq 3$, the question is:
\textit{does that family } $\gamma_{j,k}(s)$ \textit{survive all the way to $\mathfrak{D}$ 
or does it breakdown before?}

The question is made more quantitative as follows.
Since transversality is an open condition, 
there exist $0 < \hat{s}_{j,k} \leq 1$, $j,k = 1,2,3$ so that the 
one parameter family  $\gamma_{j,k}(s)$ 
exists for all $0 \leq s < \hat{s}_{j,k}$.  If $\hat{s}_{j,k} < 1$, then the 
homoclinic family loses transversality and disappears at $s = \hat{s}_{j,k}$.
If $\hat s = 1$, then the homoclinic survives all the way to $\mathfrak{D}$.
In any event, the number $\hat{s}_{j,k}$ provides a measure of the 
robustness of $\gamma_{j,k}$.

The general principle informing our work in this section is that numerical 
continuation algorithms are based on Newton's method, 
and Newton's method must break down near a bifurcation by the implicit 
function theorem.  So, if $\hat{s}_{j,k} < 1$, then the numerical continuation of $\gamma_{j,k}$ 
must break down before we reach this critical value.  
On the other hand, homoclinic orbits can undergo other
types of bifurcations, such as homoclinic doubling \cite{MR965875} or the 
Belyakov-Devaney bifurcation discussed in Section \ref{sec:bifurcations}.
Then the breakdown of numerical continuation provides an indicator that the 
homoclinic is undergoing some kind of bifurcation, and hence provides a
lower bound on the value of $\hat{s}_{j,k}$ and the disappearance of the homoclinic.
We also remark that numerical continuation breaks down near $\mathfrak{D}$ because 
the underlying equilibrium undergoes a bifurcation.

\begin{figure}[!t]
\centering
\includegraphics[height = 0.45  \textheight]{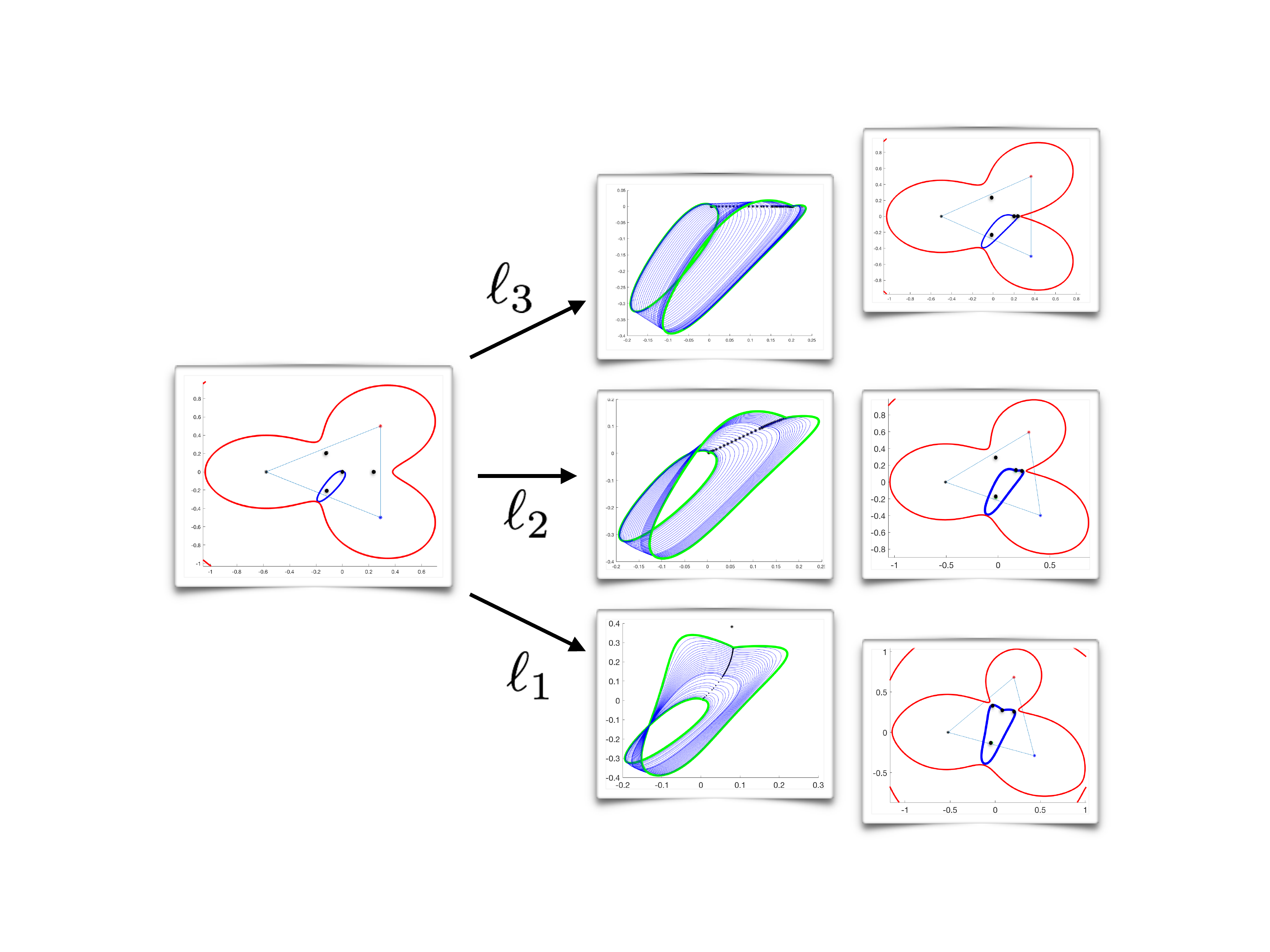} 
\caption{\textbf{Numerical continuation of $\gamma_1$:}  
continuations along $\ell_{1,2,3}(s)$ are successful roughly
$74 \%, 92 \%$, and $97 \%$ of the way to the critical 
curve $\mathfrak{D}$ respectively.
The left frame illustrates the homoclinic orbit $\gamma_1$ (blue
curve) in the triple Copenhagen problem, that is 
when the parameters are $v_0 = (1/3, 1/3, 1/3)$.  The red curve
is the zero velocity curve for the $L_0$ energy level.  The three middle 
frames illustrate the results of numerical continuation of $\gamma_1$ along 
the parameter lines $\ell_{1,2,3}(s)$.  In each case the initial and final 
numerically computer homoclinics are colored green and the 
intermediate homoclinics are colored blue.  The step size in the continuation 
algorithm is chosen adaptively, so that the blue homoclinic 
orbits are not uniformly spaced.  The black dots illustrate the 
numerical continuation of the libration points $L_0$.  The three right frames
illustrate the terminal homoclinic orbits $\gamma_{1, k}(s)$ for 
$s \approx \hat{s}_{1,k}$, $k = 1,2,3$ with the corresponding zero 
velocity curves.  In each of the three frames on the right we see that 
there are four inner libration points.  That is, the numerical
continuation did not reach the critical curve $\mathfrak{D}$. 
}
\label{fig:gamma_1}
\end{figure}

\subsection{Continuation of the $\gamma_1$ family} \label{sec:continuation_gamma1}
Applying the strategy of the previous section starting from $\gamma_1$ 
leads to the following results.  We find that the continuation algorithm 
breaks down near $s = 0.74$ when we continue along the $\ell_{1}(s)$
parameter line, near  $s = 0.9247$ along the $\ell_{2}(s)$ parameter line, 
and near $s = 0.974$ along the $\ell_3(s)$ parameter line. 
The results are illustrated graphically in Figure \ref{fig:gamma_1}, and suggest
that the $\gamma_{1,1}(s)$ family is the least robust, $\gamma_{1,3}(s)$
the most robust, and $\gamma_{1,2}(s)$ is in between.  The 
results suggest also that none of the $\gamma_1$ continuations
survive all the way to the critical curve $\mathfrak{D}$.  Rather, 
in the terminology of Section \ref{sec:bifurcations},
$\gamma_1$ appears to undergo a type IV bifurcation along each of the 
parameter lines.  More precisely we conjecture that the numerical lower bounds
\[
 0.74 < \hat{s}_{1,1}, \quad \quad 
0.9247 < \hat{s}_{1,2} \quad \quad \mbox{and} \quad \quad 
0.974 < \hat{s}_{1,3},
\]
hold along the $\ell_{1,23}(s)$ parameter lines.

\begin{figure}[!t]
\centering
\includegraphics[height = 0.45  \textheight]{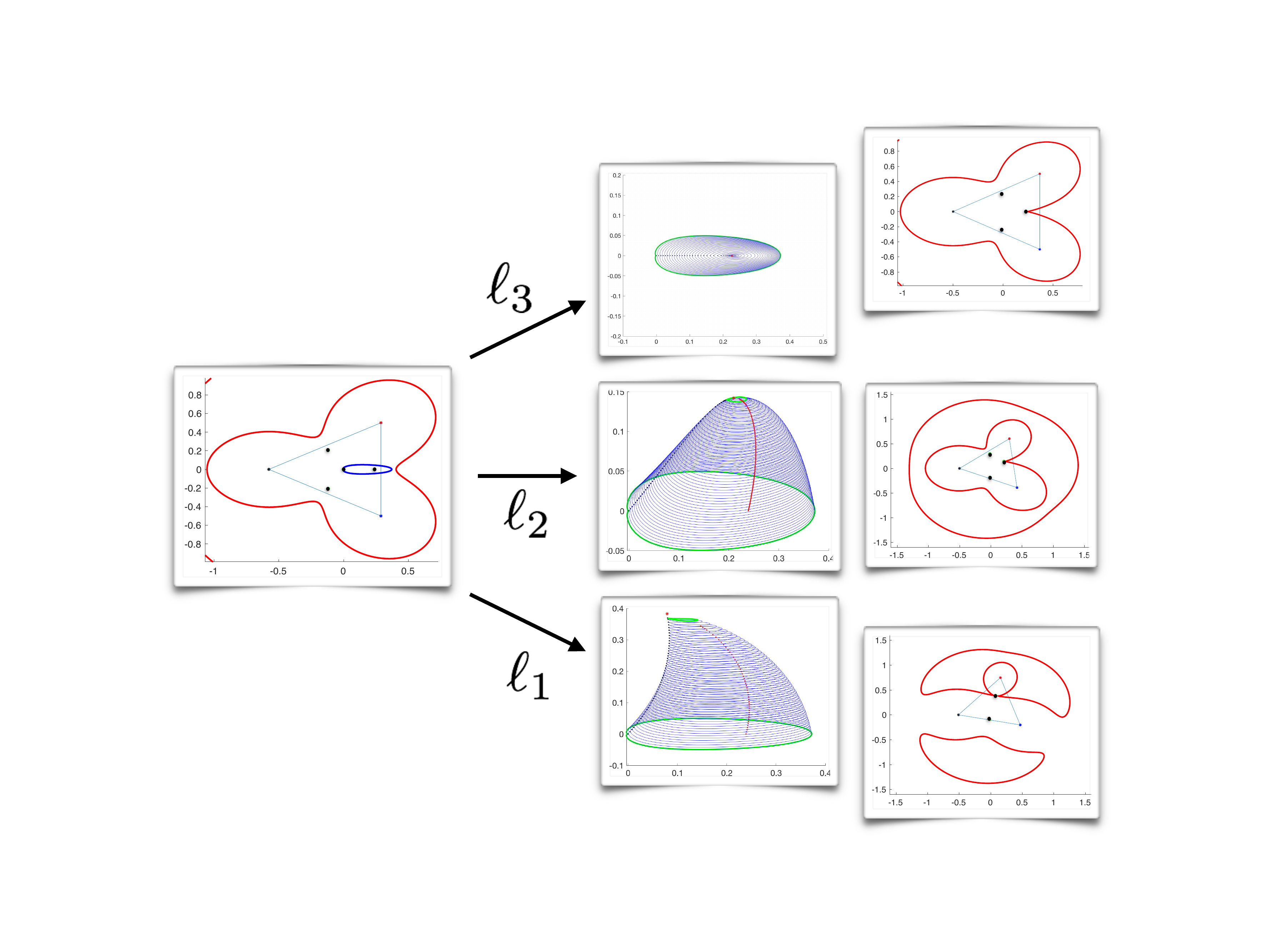} 
\caption{\textbf{Continuation of the $\gamma_2$ family:}
continuations along $\ell_{1,2,3}(s)$ are successful almost
$100 \%$ of the way to the critical 
curve $\mathfrak{D}$.
Objects are colored as described in the caption of 
Figure \ref{fig:gamma_1}.  In each of the middle frames one sees 
the homoclinic orbits shrinking to zero as $L_0$ approaches $L_2$.
Continuation of the $L_2$ family is shown in the bottom and middle 
frames (red curves).  Since the families continue almost all the way to 
$s = 1$ the right frames show the zero velocity curves at $v_{c_{1,2,3}}$
when the homoclinics have vanished.  
}
\label{fig:gamma_2}
\end{figure}

\subsection{Continuation of the $\gamma_2$ family} \label{sec:continuation_gamma2}
Continuation of the $\gamma_2$ family appear to be more straight forward, 
as in every case the continuation succeeds until $s \approx 1$, breaking down only 
when $L_2$ approaches $L_0$ and the homoclinics become very small (distance on the order of
$10^{-4}$) when they become 
difficult to distungish numerically.  
The results are illustrated in Figure \ref{fig:gamma_2}, and appear to indicate that 
the $\gamma_2$ families are 
as robust as possible.  Based on these observations we conjecture that  
\[
  \hat{s}_{2,1} =  \hat{s}_{2,2} = \hat{s}_{2,3} = 1.
\]
That is: along the parameter lines
$\ell_{1,2,3}(s)$, the $\gamma_{2}$ families continue all the way to the critical set 
$\mathfrak{D}$.
Indeed, we will see in Section \ref{sec:normalform}, that this conjecture is further 
supported by normal form calculations at $L_c$.

\begin{figure}[!t]
\centering
\includegraphics[height = 0.45  \textheight]{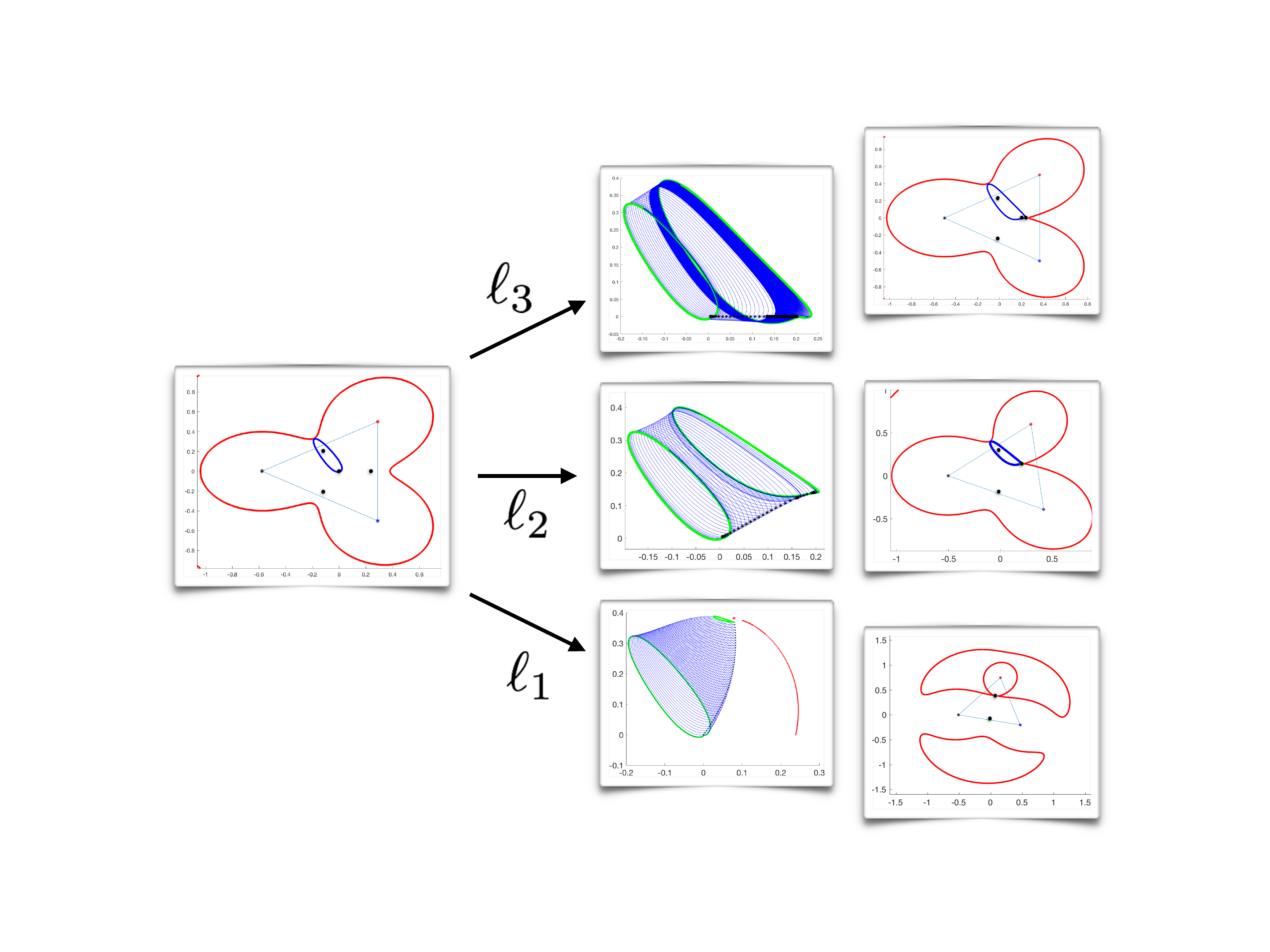} 
\caption{\textbf{Numerical continuation of the $\gamma_3$ basic homoclinic:} 
continuations along $\ell_{1,2,3}(s)$ are successful roughly
$100 \%, 97 \%$, and $97 \%$ of the way to the critical 
curve $\mathfrak{D}$ respectively .
Objects are colored as described in the caption of Figure \ref{fig:gamma_1}.
The left frame illustrates the homoclinic orbit $\gamma_3$ (blue
curve) when the parameters are $v_0 = (1/3, 1/3, 1/3)$.  
The black dots in the middle three frames illustrate the 
numerical continuation of the libration points $L_0$,
while the red dots in the bottom middle frame illustrate the 
continuation of $L_2$.   The top and middle right frames
illustrate the terminal homoclinic orbits $\gamma_{3, k}(s)$ for 
$s \approx \hat{s}_{3,k}$, $k = 1,2$ with the corresponding zero 
velocity curves. Note that in both cases the libration points $L_0$ and
$L_2$ have not quite collided, but that the homoclinics are difficult to 
continue any further.  (In the middle right frame it is difficult to distinguish 
$L_0$ and $L_2$ graphically at this resolution).
The bottom right frame on the other hand 
illustrates the zero velocity curves in the
critical energy level for system parameters $v_{c_3}$, 
where $L_0, L_2, L_3$ have collided, and the homoclinic family appears to have
shrunk to zero.   
}
\label{fig:gamma_3}
\end{figure}

\subsection{Continuation of the $\gamma_3$ family} \label{sec:continuation_gamma3}
In considering the $\gamma_3$ family it is first useful to exploit the symmetries of the 
problems, in order that some of the results already discussed can be recycled.   
For example, when $m_2 = m_3$ the system is symmetric 
about the $x$ axis and that $\gamma_{3, 3}(s)$ family is obtained from the 
$\gamma_{1,3}(s)$ family by reflection.  Then, based on the results presented in 
Section \ref{sec:continuation_gamma1} the $\gamma_3$ family inherit the conjecture
\[
0.974 < \hat{s}_{3,3} = \hat{s}_{1,3},
\]
suggesting again a type IV bifurcation 
in the terminology of Section \ref{sec:bifurcations}.

Similarly, when $m_1 = m_2$
 the system has symmetry about the line through the third primary bisecting the 
 opposite edge of the Lagrangian triangle in phase space.  Thanks to this symmetry 
 the $\gamma_{3, 1}(s)$ homoclinic family is obtained by reflection of $\gamma_{2,1}(s)$
 family, and by the results in Section \ref{sec:continuation_gamma2} we inherit the 
 conjecture that 
 \[
 \hat{s}_{3,1} = \hat{s}_{2,1} = 1. 
 \]
 
Recall also that the critical bifurcation at $v_{c_1}$ is a pitch fork, wherein 
the libration points $L_0$, $L_2$ and $L_3$ collide.   
Then the $\gamma_{3,1}(s)$ family shrinks to zero as $L_3$ approaches $L_0$, 
exactly as the $\gamma_{2,1}(s)$ family shrinks as $L_2$ approaches $L_0$.  
 It follows that $\hat{s}_{3,1} = \hat{s}_{2,1} = 1$, and the $\gamma_{3,1}(s)$ 
 family is as robust as possible.  Indeed this is the picture predicted by the 
 normal form for the pitch fork bifurcation as discussed in Section \ref{sec:bifurcations}.
 
Finally, we report that numerical continuation of the $\gamma_{3,2}$ family succeeds almost $97\%$ of the 
the way to $v_{c_2}$ along the $\ell_{2}(s)$ parameter line.  Based on this we have the 
the bound
\[
0.97 < \hat{s}_{3,2}.
\]
More insight into the fate of the $\gamma_{3,2}$ family is obtained by considering the normal form 
calculations in Section \ref{sec:normalform}.
The results of all three numerical continuations are 
summarized in Figure \ref{fig:gamma_3}.

\begin{remark}[Belyakov-Devaney bifurcations] \label{rem:belDevBif}
It is worth remarking that each of the continuation families $\gamma_{j,k}(s)$ undergoes a 
type II bifurcation before the numerical continuation breaks down.  That is, in each case the underlying 
equilibrium solution $L_0(s)$ changes stability form a bi-focus to a saddle with real distinct eigenvalues 
for $s < \hat{s}_{j,k}$.  This suggests that the Belyakov-Devaney bifurcation is universal for the $\gamma_{1,2,3}$
families.  We note that while this bifurcation effects the computation of the stable/unstable manifolds of $L_0$,
the homoclinics do not break down there, and we can arrange that the numerical continuation 
``jumps over'' the bifurcation.
\end{remark}

\begin{remark}[A point of clarification regarding the ``false'' heteroclinic cycles] \label{rem:fakeOut}
Upon close inspection of  the right three frames of Figure \ref{fig:gamma_1} 
we note that in each of the three cases considered 
$L_2$ appears to lie on the critical $\gamma_1$ curve. This  
could suggest that $\gamma_1$ terminates in a heteroclinic bifurcation, resulting in 
a connection from $L_0$ to $L_2$ and back.
However the proposed heteroclinic cycle is impossible, 
as $L_0$ and $L_2$ are in different energy levels
when $\gamma_1$ is critical.  In fact $L_0$ and $L_2$ are only ever in the same energy 
level when they collide and disappear on the critical curve $\mathfrak{D}$.  By computing  
the tangent vectors along $\gamma_1$ we find that it ``passes over'' $L_2$  
with non-zero velocity, so that the suggestion of a heteroclinic orbit seen in 
Figure \ref{fig:gamma_1} is an effect of projecting into the plane.    
\end{remark}

\section{The blue sky test: continuation of the planar Lyapunov families of $L_{1,2,3}$}
\label{sec:blueSkies}
In this section we discuss calculations which can be used to refine the results of
Section \ref{sec:numCont}.  The idea is based on the fact, already mentioned in the introduction,
that $\gamma_{j}$, $j = 1,2,3$ appear as limits of the planar Lyapunov families 
associated with saddle $\times$ center libration points $L_j$, $j = 1,2,3$.  
Our experience suggests that this relationship is very stable with respect to parameter 
perturbations.  Indeed, let $L_{j,k}(s)$ denote the continuation of $L_j$ along the parameter line 
$\ell_{k}(s)$.  For $0 \leq s \leq \hat{s}_{j,k}$ we always find that 
$\gamma_{j,k}(s)$ is always the limit of the planar Lyapunov family 
associated with $L_{j,k}(s)$, $j,k = 1,2,3$.  
To put it another way, the tube of periodic orbits attached to $\gamma_{j,k}(s)$ 
appears to change its limit behavior only with the loss of transversality and 
disappearance of the homoclinic itself.

In this section we proceeded as if the converse of this statement holds.
That is, when the planar Lyapunov family associated with $L_{j,k}(s)$ does not 
accumulate to a homoclinic at $L_{0, k}(s)$, we take this as an indication that
the $\gamma_{j,k}(s)$ homoclinic family has terminated.  Hence, when we numerically
locate such an $s \in (0,1)$, we assume that $s > \hat{s}_{j,k}$.
Since the continuation based methods of Section \ref{sec:numCont} provide 
lower bounds on $\hat{s}_{j,k}$, the methods based on blue sky catastrophes
developed in this section provide upper bounds and hence numerical enclosures 
of the bifurcation parameter.

Of course this procedure is indicative rather than definitive, as the assumptions are based on 
heuristics rather than mathematically rigorous results.  A change in the terminal 
behavior of a tube indicates only that some bifurcation has occurred
in the homoclinic family, not necessarily that it has disappeared.    
Nevertheless, when used in conjunction with the continuation methods of Section 
\ref{sec:numCont} and the normal form/center manifold analysis of Section 
\ref{sec:normalform} we obtain a compelling narrative describing the global 
dynamics. We return to this point in 
Section \ref{sec:conclusions}.

\subsection{Blue skies for the $\gamma_1$ family}
Recall from Section \ref{sec:numCont} that the $\gamma_{1,1}(s)$ family 
enjoys the lower bound $0.74 < \hat{s}_{1,1}$.  In other words, we are 
fairly confident that family exists along more than $74$ percent 
of the $\ell_1(s)$ parameter line.  We 
apply numerical continuation (with respect to the energy)  
to the planar Lyapunov family associated with $L_{1,1}(0.74)$, and recover the 
homoclinic connection to $L_{0,1}(0.74)$ already computed by mass parameter continuation 
in the previous section.  

Now, taking $s = 0.78$ we numerically continue the 
Planar Lyapunov family associated with $L_{1,1}(0.78)$ and find that the periodic 
orbits do not accumulate to an orbit homoclinic to $L_{0,1}(0.78)$.  Instead, 
we find that we can continue the periodic orbits up to and beyond the energy 
level of $L_{0,1}(0.78)$. The Lyapunov family appears to eventually
accumulate on an orbit which collides 
with the third primary.   This another piece of evidence supporting the claim that 
the $\gamma_{1,1}(s)$ family terminates nearby.  Indeed we
appear to have the bound $\hat{s}_{1,1} < 0.78$.  Combining this with the 
results discussed in Section \ref{sec:continuation_gamma1}, we conjecture that 
\[
\hat{s}_{1,1} \in (0.74, 0.78).
\]

The results of the calculation just discussed are illustrated in 
Figure \ref{fig:gamma_1_blueSkies_m1m2}.  When we perform 
similar calculations for the $\gamma_{1,2}(s)$ and 
$\gamma_{1,3}(s)$ families of homoclinic orbits, 
and combine these with the lower bounds already obtained Section \ref{sec:numCont},
we obtain the enclosures 
\begin{align*}
\hat{s}_{1,2} & \in (0.9, 0.95)   \\
\hat{s}_{1,3} & \in (0.97, 1)
\end{align*}
The results are illustrated in Figures \ref{fig:gamma_1_blueSkies_middleCase} and
\ref{fig:gamma_1_blueSkies_m2m3}.

\subsection{Blue skies for the $\gamma_2$ family}
Since the $\gamma_2$ family appears to disappear 
with $L_2$ in the saddle node bifurcation, the kind of blue sky analysis 
discussed above is not available here.  Indeed, for $s$ near one the 
$L_2$ Lyapunov families do converge to the small homoclinics seen in 
Section  \ref{sec:numCont}, and at the critical energy $L_2$ is gone
so that there are no planar Lyapunov families to study.  
The $\gamma_2$ family is much more amenable to the local 
analysis performed in Section \ref{sec:normalform}.

\subsection{Blue skies for the $\gamma_3$ family}
As already noted in Section  \ref{sec:numCont}, the $\gamma_3$ family 
exhibits somewhat more complicated behavior than the $\gamma_1$ family --
which never persists to $\mathfrak{D}$, and the $\gamma_2$ family --
which always does.  
The behavior of the $\gamma_{3}$ family on the $\ell_1(s)$
and $\ell_3(s)$ parameter lines is forced by symmetry.  
So $\gamma_{3,3}(s)$ has the same terminal behavior as 
$\gamma_{1,3}(s)$, as one is obtained from the other by 
reflection about the $x$ axis.  
From this we have that 
\[
\hat{s}_{3,3} = \hat{s}_{1,3} \in   (0.97, 1), 
\]
thanks to the results for $\gamma_1$ already reported.
Similarly, the $\gamma_{3,1}$ family is related to the 
$\gamma_{2,1}$ family by a reflection.  Then the study of the 
$\gamma_{3,1}$ family is amenable to normal form analysis, 
as already remarked for the $\gamma_2$ family.  

Finally, recall from Section \ref{sec:numCont} that the $\gamma_{3,2}(s)$ family 
continued $97$ percent of the way to $\mathfrak{D}$. 
We remark that the blue sky test for $\gamma_{3,2}(s)$ is inconclusive, 
in the sense that for $s$ values near one on the $\ell_{2}(s)$ parameter curve 
the numerical continuation of the periodic family breaks down near the
$L_c$ energy level. It is unclear what this breakdown indicates -- does the
the periodic family accumulate to a homoclinic at $L_c$?  Or would a more
more computational effort show that the periodic family continues past the 
$L_c$ energy level?  
Again, since the problem occurs near $\mathfrak{D}$, 
the normal form analysis in Section \ref{sec:normalform} 
is seen to resolve the issue.

\begin{figure}[!t]
\centering
\includegraphics[height = 0.28   \textheight]{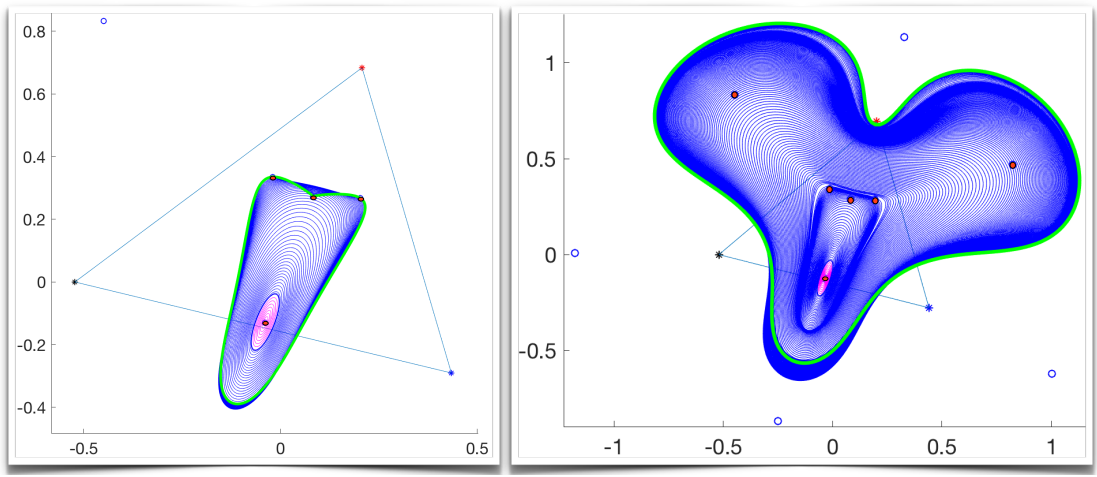} 
\caption{\textbf{Blue sky catastrophes along the $\ell_1(s)$ parameter curve:}
The left frame illustrates the CRFBP with parameter values $\ell_{1}(0.74)$.
We study the blue sky catastrophe for the planar Lyapunov
family of $L_{1,1}(0.74)$, and see that the periodic orbits
accumulate to an orbit homoclinic to $L_{0,1}(0.74)$ -- in fact the 
same homoclinic orbit depicted in the bottom right frame of 
Figure \ref{fig:gamma_1}, computed by numerical continuation 
along the $\ell_1(s)$ parameter curve starting from $\gamma_1$. 
The homoclinic orbit is represented by the
green curve, periodic orbits on the center manifold of $L_{1,1}(0.74)$ are 
represented by the magenta curves, and periodic orbits obtained by 
numerical continuation from the center manifold are represented by 
blue curves. The calculation suggests that $\hat{s}_{1,1} > 0.74$.
The right frame illustrates the planar Lyapunov family of $L_{1,1}(0.78)$, 
again computed by numerical continuation from the center manifold.  
The planar Lyapunov family appears to terminate at a collision with 
the small primary body.  The fact that the Lyapunov family does not 
accumulate to an orbit homoclinic to $L_{0,1}(0.78)$ suggests that 
$\hat{s}_{1,1} < 0.78$.
}
\label{fig:gamma_1_blueSkies_m1m2}
\end{figure}

\begin{figure}[!t]
\centering
\includegraphics[height = 0.28  \textheight]{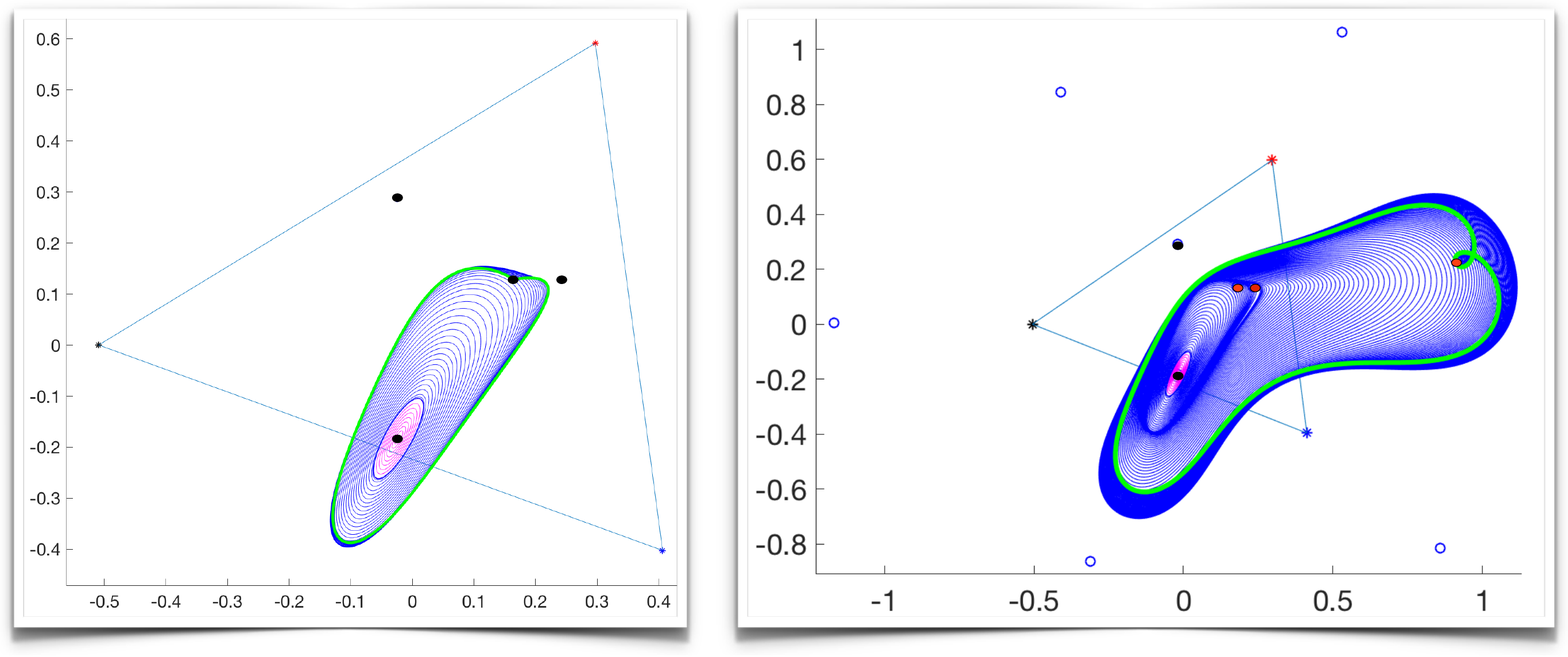} 
\caption{\textbf{Blue sky catastrophes on the $\ell_2(s)$ parameter curve:} 
The left frame illustrates the CRFBP with parameter values $\ell_{2}(0.9)$.
We study the blue sky catastrophe for the planar Lyapunov
family of $L_{1,2}(0.9)$ and reasoning just as in the caption of 
Figure \ref{fig:gamma_1_blueSkies_m1m2}
conclude that $\hat{s}_{1,2} > 0.9$.
Similarly, the right frame illustrates the planar Lyapunov family of $L_{1,2}(0.95)$, 
and suggests that $\hat{s}_{1,2} < 0.95$.
}
\label{fig:gamma_1_blueSkies_middleCase}
\end{figure}

\begin{figure}[!t]
\centering
\includegraphics[height = 0.28   \textheight]{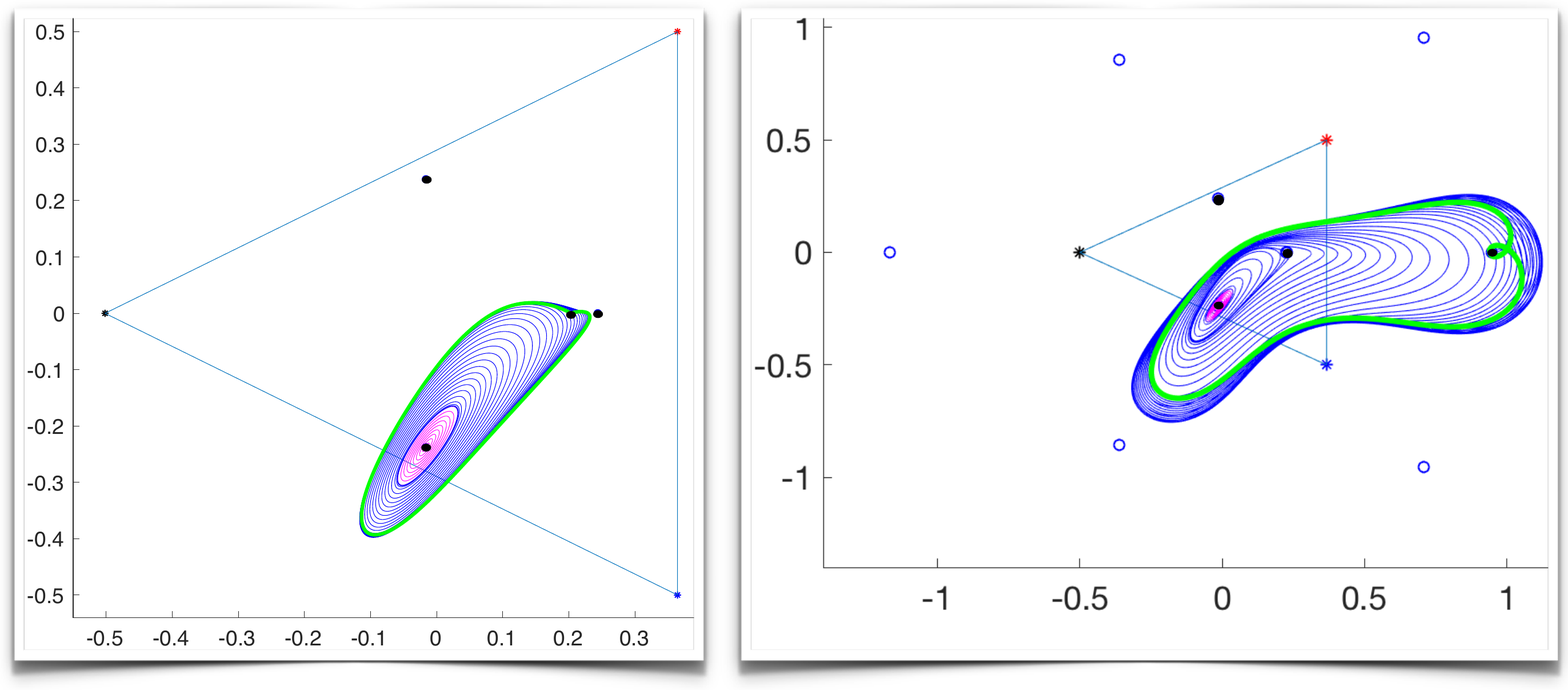} 
\caption{\textbf{Blue sky catastrophes on the $\ell_3(s)$ parameter curve:} 
The left frame illustrates the CRFBP with parameter values $\ell_{3}(0.97)$.
We study the blue sky catastrophe for the planar Lyapunov
family of $L_{1,3}(0.97)$ and reasoning just as in the caption of 
Figure \ref{fig:gamma_1_blueSkies_m1m2}
conclude that $\hat{s}_{1,3} > 0.97$.
Similarly, the right frame illustrates the planar Lyapunov family of $L_{1,3}(1)$, 
and suggests that $\hat{s}_{1,3} < 1$.
}

\label{fig:gamma_1_blueSkies_m2m3}
\end{figure}

\section{Results of center manifold/normal form calculations }\label{sec:normalform}

Instead of numerically following homoclinic orbits or Lyapunov families from the triple Copenhagen 
problem to the bifurcation curve, we can start our calculations at the bifurcation point
itself. That is, we compute the normal form $r$ of the center dynamics along the parameter 
lines $\ell_{1,2,3}(s)$. As we start at the bifurcation point instead of at the triple Copenhagen problem, 
we reverse the orientation on the curves, and consider the parameter lines 
$\hat{\ell}_{1,2,3}(s) = \ell_{1,2,3}(1-s)$ instead. For the saddle node bifurcation, 
we have the normal form
\begin{align*}
r : \mathbb{R} \times \mathbb{R}^2 \to \mathbb{R}^2, (s,x,y) \mapsto (y, s + \alpha_1 x^2),
\end{align*}
for the conjugate vector field $r$ on the center manifold. With the normal form, we can confirm 
that $\hat{s}_{2,k} = 1$ for $k=2,3$, see Figure \ref{fig:Saddle_Node}. By calculating higher 
order terms of the conjugate vector field, we can determine whether the $\gamma_3$ orbit lies 
on the center manifold. The calculated vector fields are given in Table 1. If we truncate the vector field $r$ to third order, its phase portrait does contain two fixed points, but does not contain a homoclinic orbit for small values of $s$. This suggest that if $\gamma_{3,2}$ persists until $\mathfrak{D}$, it does not lie on the local center manifold.

\begin{figure}[!h]\centering
\includegraphics[height = 0.3 \textwidth]{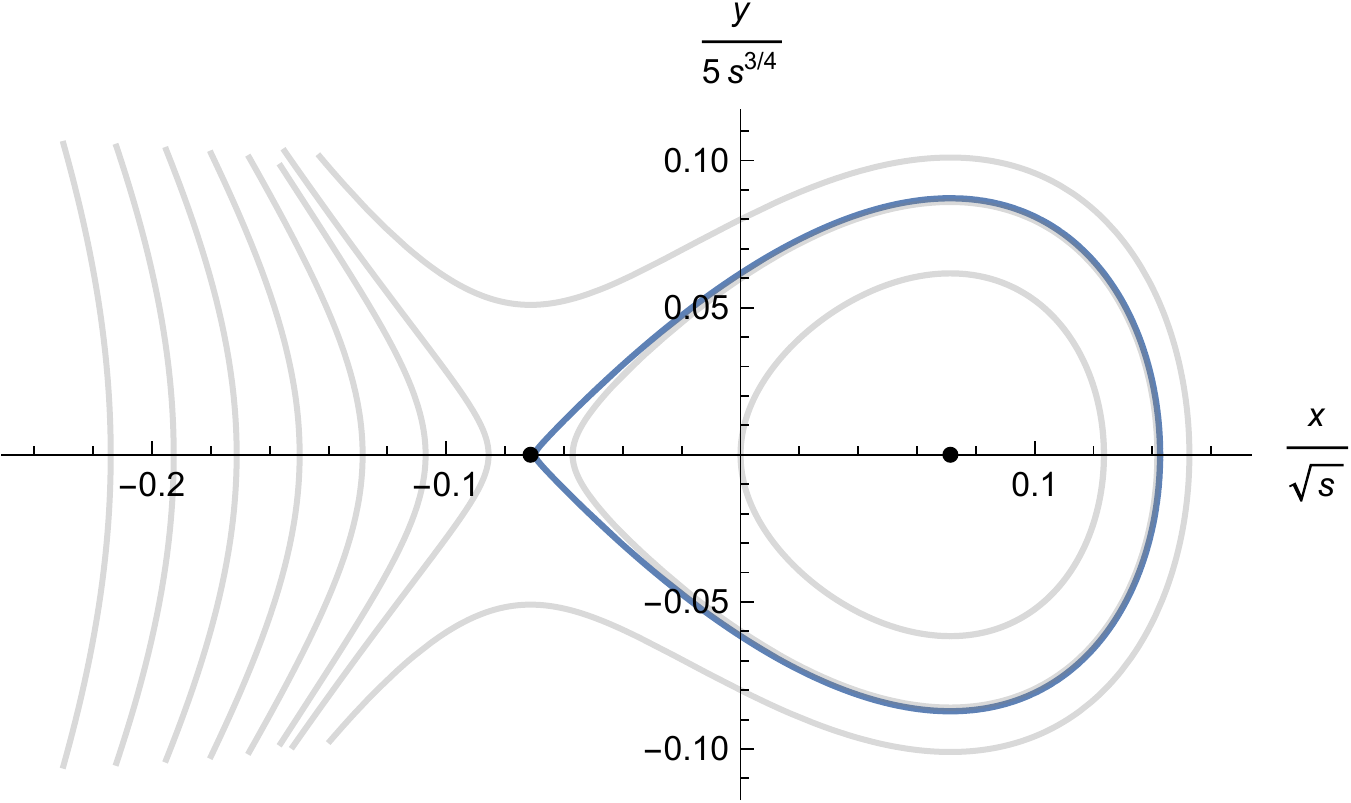}  \hfill
\includegraphics[height = 0.3 \textwidth]{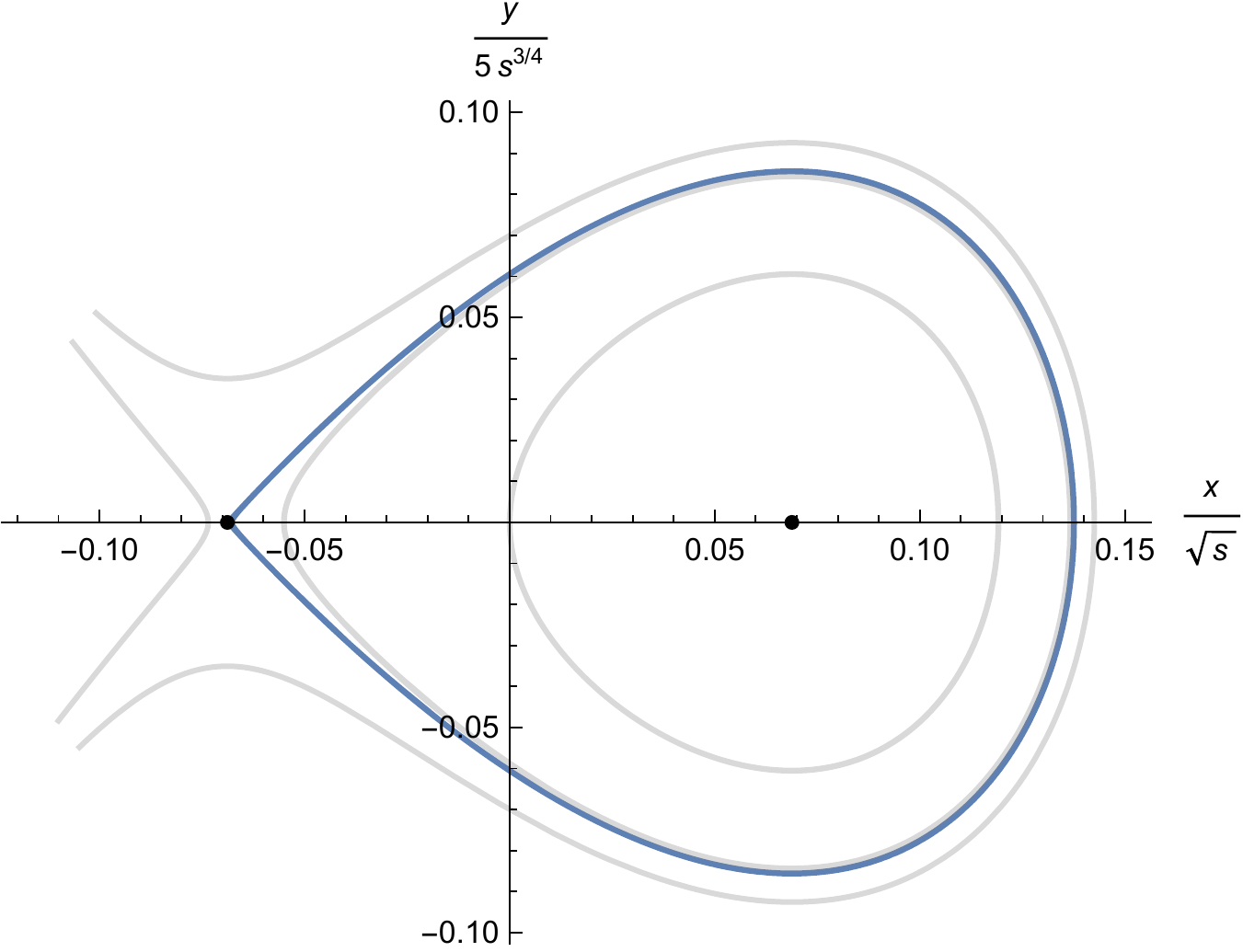}
\caption{\textbf{Small orbits near the saddle node bifurcation:} In the left figure, we plotted several orbits for the normal form at $\hat{\ell}_2(10^{-13})$, with the blue line the approximation for $\gamma_{2,2}(1-10^{-13})$. All orbits on the left side of $L_0$ decrease to $x = - \infty$. In the right figure, we plotted several orbits for the normal form at $\hat{\ell}_3(10^{-13})$, with the blue line the approximation for $\gamma_{2,3}(1-10^{-13})$. Both frames illustrate the conjugate 2 dimensional vector field
on the center manifold}
\label{fig:Saddle_Node}
\end{figure}

For $\hat{\ell}_1(s)$, the normal form of the conjugate vector field is given by
\begin{align*}
r : \mathbb{R} \times \mathbb{R}^2 \to \mathbb{R}^2, (s,x,y) \mapsto (y, \alpha_1 s x + \alpha_2 x^3),
\end{align*}
which will confirm that $\hat{s}_{2,1} =1$ and $\hat{s}_{3,1} =1$, see Figure \ref{fig:PitchFork}. 
Instead of computing the normal form of the co-dimension one  pitchfork bifurcation at $v_{c_1}$, we 
can compute the normal form of the co-dimension two bifurcation at $v_{c_1}$. Then the normal form 
would be, see for instance \cite{BuonoPolzMontaldi},
\begin{align*}
r : \mathbb{R}^2 \times \mathbb{R}^2 \to \mathbb{R}^2, (s,t,x,y) \mapsto (y , t + \alpha_1 s x + \alpha_2 x^3).
\end{align*}
With this normal form, we can recover part of $\mathfrak{D}$ near $v_{c_1}$, and show the 
persistence of $\gamma_3$ until $\mathfrak{D}$ for $v_c$ near $v_{c_1}$, see Figure \ref{fig:extra_homoclinic}. The 
calculated normal forms for both the co-dimension 1 and the co-dimension 2 bifurcation at $v_{c_1}$ 
are found in Table 1.

\begin{figure}[!h]\centering
\includegraphics[height = 0.4 \textwidth]{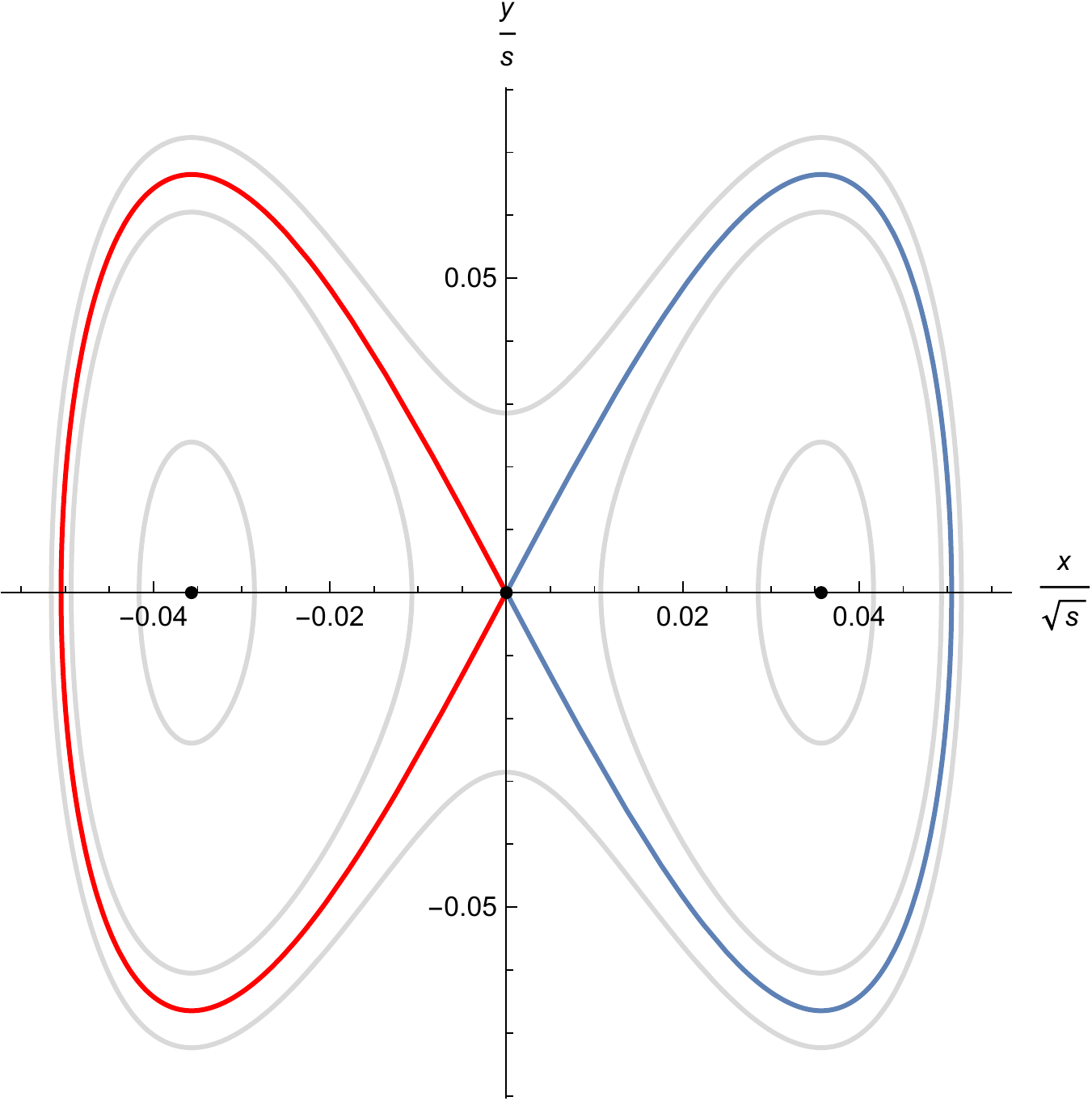} 
\caption{\textbf{Small orbits near the Pitchfork bifurcation:} Several orbits for the normal form 
at $\hat{\ell}_1(10^{-13})$. The red and blue line are approximations for $\gamma_{2,1}(1-10^{-13})$ 
and $\gamma_{3,1}(1-10^{-13})$. The frame illustrates the conjugate 2 dimensional vector field
on the center manifold.}
\label{fig:PitchFork}
\end{figure}

\begin{figure}[!h]
\centering
\includegraphics[width = 1 \textwidth]{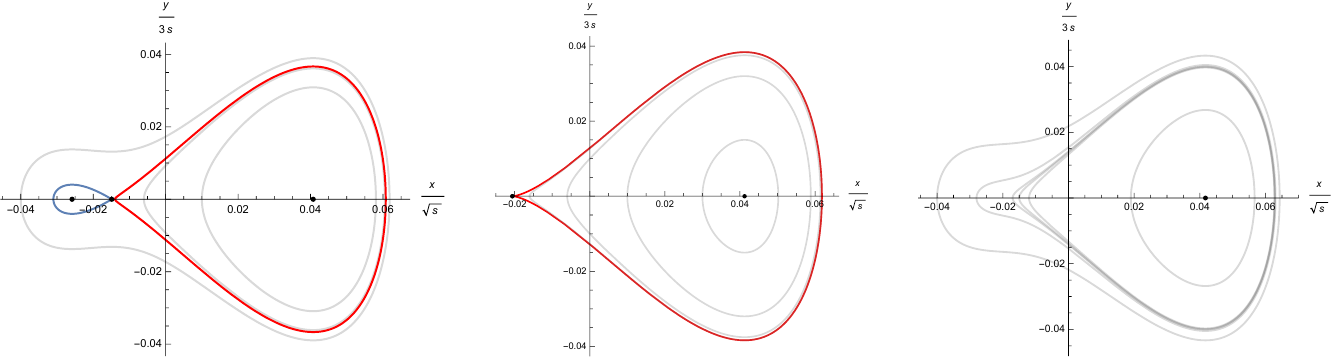}
\caption{\textbf{Persistence of $\gamma_3$:} We show the persistence of 
$\gamma_3$ until the saddle node bifurcation for $v_c$ near $v_{{\tiny \mbox{pf}}}$. 
In the left frame, we have 
$(m_1,m_2,m_3) \in  \mathfrak{S}_I$ near $v_{{\tiny \mbox{pf}}}$ but away from the 
$m_1 = m_2$ edge. We see the small homoclinic orbit $\gamma_2$ and the large 
homoclinic orbit $\gamma_3$ illustrated by blue and red curves respectively. In the middle frame 
we have $(m_1,m_2,m_3) \in \mathfrak{D}$ near $v_{{\tiny \mbox{pf}}}$. Here $L_0$ and $L_2$ have 
collided at $L_c$, yet we still see $\gamma_3$, which is illustrated by the red curve. 
In the right frame, we have $(m_1,m_2,m_3) \in  \mathfrak{S}_{II}$ near $v_{{\tiny \mbox{pf}}}$ 
but away from the $m_1 = m_2$ edge. As $L_c$ has disappeared, $\gamma_3$ has become a 
periodic orbit around $L_3$. All frames illustrate the conjugate 2 dimensional vector 
field on the center manifold.}\label{fig:extra_homoclinic}
\end{figure}

\begin{table}
\centering
\begin{tabular}[]{c | l | c | c | c }
& Masses & $\alpha_1$ & $\alpha_2$ &  Third order terms \\ & & & &  in the y-derivative  \\ \hline
\multirow{2}{*}{Pitchfork} & $v_{c_1}$ co-dimension 1 & $6.93442$ & $-5441.04$ & -- \\
& $v_{c_1}$ co-dimension 2 & $6.93442$ & $- 5441.04$ & -- \\ \hline
\multirow{2}{*}{Saddle node} 
&$v_{c_2}$ & $- 196.451$ & -- &  $20818.5 x^3$  \\
& $v_{c_3}$ & $ - 211.138$ & -- & $24498.1 x^3$
\end{tabular}\label{tab:Normal_Forms}
\caption{\textbf{Normal forms:} The calculated constants in the normal forms at $v_{c_k}$ for $k=1,2,3$.}
\end{table}

\subsection{Separation of $\mathfrak{D}$ and the $\gamma_3$ family} \label{sec:seperationOfD_and_D3}

Following our work in Sections \ref{sec:numCont} and \ref{sec:blueSkies}, we want to better understand
the robustness of the $\gamma_3$ family.  This is especially delicate off the $m_1 = m_2$ and 
$m_2 = m_3$ edges of the parameter simplex.  Symmetry considerations -- combined with the 
the local analysis of  the previous section -- show that at and near the $m_1 = m_2$ edge
the $\gamma_3$ family is completely robust,  surviving all the way to the critical 
curve $\mathfrak{D}$.  
On the other hand, symmetry considerations combined with the continuation and blue sky 
tests of Sections \ref{sec:numCont} and \ref{sec:blueSkies} suggest that 
along and near the $m_2 =  m_3$ edge, the $\gamma_3$ family 
terminates before the $\mathfrak{D}$ curve.  Taken together, this suggests that 
$\gamma_3$ must exhibit some transitional behavior, some co-dimension two 
bifurcation, along $\mathfrak{D}$.

We attempt to resolve this picture as follows.
\begin{enumerate}
\item\label{step1} We use Newton's method to find $m_2$, $x_0$ and $y_0$ for fixed $m_1$ such 
that $(m_1,m_2 ,1-m_1-m_3 ) \in \mathfrak{D}$, with corresponding libration point is $L_c = (x_0,0,y_0,0)$.,
\item\label{step2} We choose $\centerconj(x,y) = (x,y)$ and find the Taylor polynomials for the center manifold, the center stable/unstable manifolds, and the (un)stable branch on the center manifold, as shown
 in Lemmas \ref{thm: centerconjugacyformalseries} to \ref{thm: centerstableconjugacyformalseries} 
up to order $15$ using radial derivatives,
\item\label{step3} We numerically find the region where the conjugacy equation \eqref{eq: centerstableconjugacy} for the center stable/unstable manifold has an error of order $10^{-15}$ and the stable and unstable branches $\stablebranch$ and $\unstablebranch$ are numerically correct for the Taylor polynomials 
of step \ref{step2},
\item\label{step6} We take $t_s$ and $t_u$ such that $\stablebranch(t_s)$ and $\unstablebranch(t_u)$  are in the region of step \ref{step3}. We then find homoclinic orbits by numerically integrating part of the unstable 
fiber of $\unstablebranch(t_u)$ until the stable fiber of $\stablebranch(t_s)$ is reached.
We check numerically that the manifolds have a transverse intersection, indicating that 
we are at a type III global bifurcation.
\end{enumerate}
Here the function $k_c$ is a choice we have to make to find the center manifold at $v_c \in \mathfrak{D}$. We refer the reader to Appendix \ref{sec:centerCalcAppendix} for background on the center manifold, and Lemma \ref{thm: centerconjugacyformalseries} in particular to see why we have to choose $k_c$. Furthermore, the branches $\stablebranch$ and $\unstablebranch$ parameterize the (un)stable orbit on the center manifold, see Figure \ref{fig: branches4247}.

\begin{figure}[!ht]
\centering
\begin{tabular}{cc}
\includegraphics[width = 0.45 \textwidth]{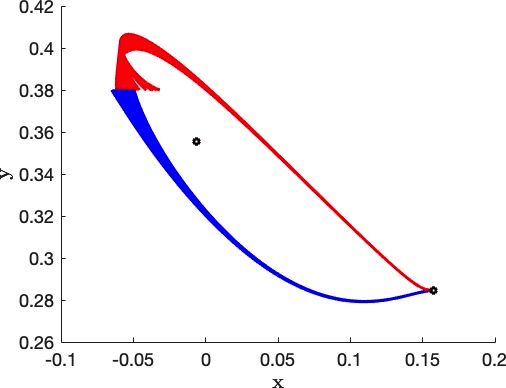} & \includegraphics[width = 0.45 \textwidth]{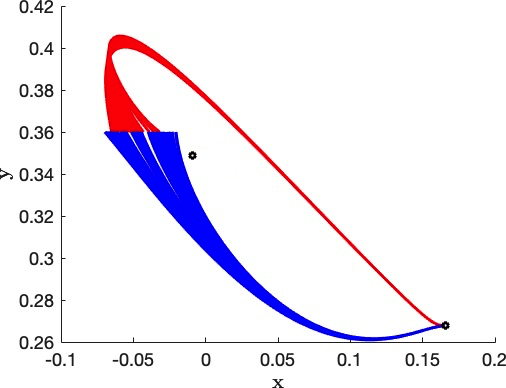} \\
$m_1 = 0.430$ & $m_1 = 0.429$ \\
\includegraphics[width = 0.45 \textwidth]{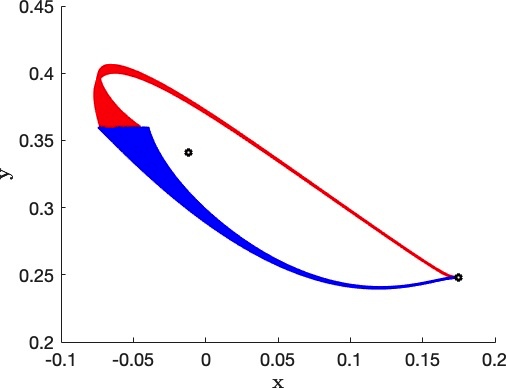} &  \includegraphics[width = 0.45 \textwidth]{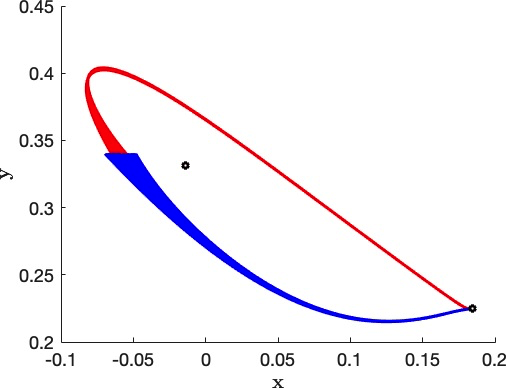} \\
$m_1 = 0.428$ & $m_1 = 0.427$ \\ 
\multicolumn{2}{c}{
\includegraphics[width = 0.45 \textwidth]{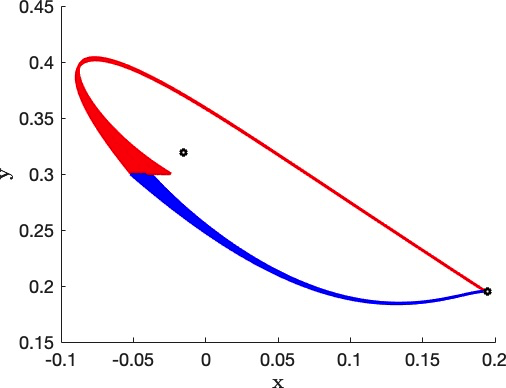} } \\
\multicolumn{2}{c}{
$m_1 = 0.426$ } 
\end{tabular}
\caption{\textbf{Short homoclinics around $L_3$ on $\mathfrak{D}$:} For different values of $m_1$ we find a short homoclinic 
orbit for $L_c$ at parameter values $(m_1,m_2 ,1-m_1-m_2) \in \mathfrak{D}$. The red surface is the backward flow of part of 
the edge of stable manifold, and the blue surface is the forward flow of part of the edge of the unstable manifold. In all 5 cases, 
there is a transverse intersection between the stable and unstable manifold on the $y = \text{constant}$ level set with $\dot{y}$ positive,
and each of the orbits has a shape suggesting they are continuations of $\gamma_3$ in the triple Copenhagen 
problem.}
\label{fig:SmallHomocliniconD}
\end{figure}

For $m_1 \in \{ 0.426 ,0.427,0.428, 0.429,0.430\}$, we compute the homoclinic for $L_c$ around $L_3$.  See Figure \ref{fig:SmallHomocliniconD}.
For all five values of $m_1$, we find that the homoclinic orbit exists, and it leaves the unstable manifold close to the center unstable branch. On the other hand, the homoclinic orbits enters the  stable manifold further away from the stable branch on the center manifold 
when we decrease $m_1$. To be more precise, for all values of $m_1$ we look where on the stable fiber of $\stablebranch(t_s)$ the homoclinic orbit enters 
the stable manifold. As $m_1$ decreases, we see that the orbit enters the stable manifold further away from $\stablebranch(t_s)$ on its stable fiber. Equivalent, the homoclinic orbit enters the stable fiber of $\stablebranch(t)$ at fixed distance from $\stablebranch(t)$ for decreasing values of $t$ as $m_1$ decreases.
We conjecture
 that the $\gamma_3$ family split from the critical curve $\mathfrak{D}$ when the homoclinic orbit around $L_3$ enters the stable manifold along the stable direction.

\begin{figure}[!ht]
\centering
\begin{tabular}{cc}
\includegraphics[width = 0.45 \textwidth]{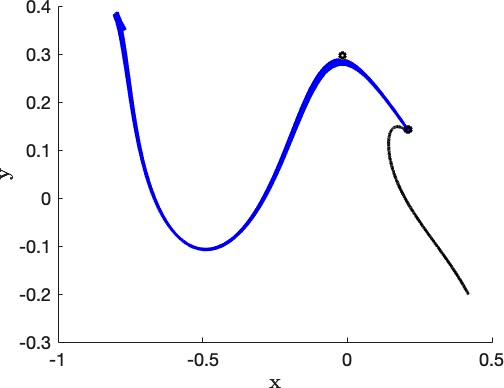} 
& \includegraphics[width = 0.45 \textwidth]{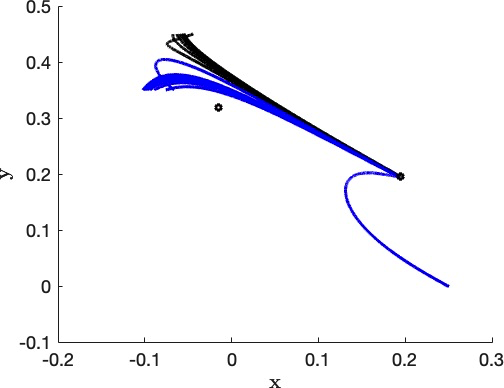} \\
\end{tabular}
\caption{\textbf{Qualitative behavior of the stable manifold:} left frame plots the backward integration of 
the edge of the stable manifold for parameter values $v_{c_2} \in \mathfrak{D}$. Right frame plots the 
backward integration of the edge of the stable manifold for the parameter values $(m_1,m_2 ,1-m_1-m_2) \in \mathfrak{D}$ for $m_1 = 0.426$.}
\label{fig:Dichotomy}
\end{figure}

In the right frame of Figure \ref{fig:Dichotomy}, we integrated points on the boundary of the stable manifold for $m_1 = 0.426$, and we see three different kinds of behavior. 
The single blue orbit in the lower half of the figure is the backward integration of the stable branch on 
the center manifold, and all other orbits lie on a stable fiber. Near the equilibrium, the orbits on the stable fibers decay exponentially fast towards the stable branch on the center manifold. In other words, in backward time the orbits on the stable fibers diverge exponentially fast away from the stable branch. This explains why the stable branch on the center manifold lies isolated from the orbits on the stable fibers. The orbits ending on the stable fibers 
exhibit two different kinds of behavior. The black orbits all come towards $L_3$ from the right, make 
a bend before $L_3$, and end up in the stable manifold of $L_c$. The blue orbits all come from the 
left of $L_3$, passing above $L_3$ before making a bend towards the stable manifold. Due to this dichotomy, in particular the blue orbits passing above $L_3$, we might 
find a homoclinic orbit around $L_3$ when $m_1 = 0.426$. In fact, we find such a homoclinic orbit  which is shown in Figure \ref{fig:SmallHomocliniconD}. In particular, in Figure \ref{fig:SmallHomocliniconD} we only plot a small region of the stable manifold in which the homoclinic orbit lies.

On the contrary, in the left figure of Figure \ref{fig:Dichotomy}, we integrated several points on the boundary of the 
stable manifold for the parameter set $v_{c_2}$. Here the black line is the backward integration of 
the stable branch of the center manifold. All other orbits end up on a stable fiber and exhibit the 
same qualitative behavior. They come towards $L_3$ from the left, but pass underneath $L_3$ and 
finally end up in the stable manifold.

Since we do not see the dichotomy nor orbits passing above $L_3$ for the parameters $v_{c_2}$ that we did see for $m_1 = 0.426$,  we qualitatively rule out that there is a short homoclinic orbit around $L_3$ for the parameter 
values $v_{c_2}$. Any homoclinic around $L_3$ will occur on longer time scales than the short homoclinics
in the $\gamma_3$ family.  This shows that $\hat{s}_{3,2} < 1$.

\subsection{Additional homoclinic at $\mathfrak{D}$} \label{sec:largeHomoclinics}
We now expand on the ideas of the previous section, to study longer homoclinic orbits
at $L_c$.
Again we consider symmetric and non-symmetric cases.  In the symmetric case 
 step 1 will be modified, as for $v_{c_3}$ we want to impose that $m_2 = m_3$, 
instead of fixing $m_1$ a priori.  For $v_{c_1}$, we have shown at the beginning of Section \ref{sec:normalform} 
that the bifurcation is a supercritical pitchfork bifurcation: thus there will be no (un)stable solution branch on 
the center manifold. In fact, we show that the constant $\mathscr{E}$ from 
Lemma \ref{thm: centerstablebranchformalseries} is zero at $v_{c_1}$. As a consequence, 
the stable and unstable manifold of $L_{{\tiny \mbox{pf}}}$ are both of dimension $1$, and we
expect no transverse intersection between them. Therefore, we only look for homoclinics for
$L_c$ at parameter values $v_{c_2}$ and $v_{c_3}$.

\subsection{The pitchfork bifurcation at $v_{c_1}$} \label{sec:pitchfork}

To show that the constant $\mathscr{E}$ from Lemma 
\ref{thm: centerstablebranchformalseries} is $0$, we find the symmetry of $\dynamics$. For notational convenience, we apply the translation 
$T(x,y) = (x - (x_1 + x_2)/2, y - (y_1 +y _2 )/2)$ to our dynamical system and hence 
the positions of the planets become
\begin{align*}
\hat{x}_1 &=  \frac{-1}{ 4\constant} ,& \hat{y}_1 &= \frac{ \sqrt{3}(1-2m_1)}{4\constant}, \\
\hat{x}_2 &= \frac{1}{4 \constant}, & \hat{y}_2 &=  \frac{-\sqrt{3}(1-2m_1)}{4 \constant} ,\\
\hat{x}_3 &= \frac{3 - 6 m_1 }{4 \constant} ,& \hat{y}_3 &= \frac{\sqrt{3}}{4 \constant}.
\end{align*}
Here $\constant = \sqrt{m_2^2 + m_2 m_3 + m_3^3 } = \sqrt{ 1 - 3m_1 + 3 m_1^2}$ when $m_1 = m_2$. 
Now we see that when $m_1 = m_2$ we have that $(\hat{x}_1,\hat{y}_1)$, and therefore also $(\hat{x}_2,\hat{y}_2)$, 
is perpendicular to $(\hat{x}_3,\hat{y}_3)$. Let $\Theta$ be the rotation matrix such that the 
positive $\hat{y}$-axis is rotated onto the 
normalized vector $(\hat{x}_3,\hat{y}_3)$.  
Then, $\Theta$ is given by the formula
\begin{align*}
\Theta (x,y) = 2  x \mathbf{\hat{x}}_2 +  \frac{2y}{\sqrt{3}}\mathbf{\hat{x}}_3
\end{align*}
Note that $\mathbf{\hat{x}}_1 = - \mathbf{\hat{x}}_2$ and that $\mathbf{\hat{x}}_2$ and 
$\mathbf{\hat{x}}_3$ are perpendicular.  Then, the term $\kap \mathbf{\hat{x}}_3$ in the first norm is due to the fact  that the translation $T$ can be written as $T(\boldx) = \boldx + (1- 2 m_1 ) \mathbf{\hat{x}}_3$,
\begin{align*}
\Omega( T^{-1}(\Theta (-s, t) ))  &= \frac{1}{2} \| -2 s \mathbf{\hat{x}}_2 + \frac{2t}{\sqrt{3}} \mathbf{\hat{x}}_3  \kap \mathbf{\hat{x}}_3 \|_2^2+ \sum_{i=1}^3 \frac{m_i}{\| - 2 s \mathbf{\hat{x}}_2 + \frac{2t}{\sqrt{3}} \mathbf{\hat{x}}_3 - \mathbf{\hat{x}}_i\|_2} \\
 			&= \frac{1}{2} \| 2 s \mathbf{\hat{x}}_2 + \frac{2t}{\sqrt{3}} \mathbf{\hat{x}}_3 \kap \mathbf{\hat{x}}_3 \|_2^2 + \frac{m_3}{\| -2 s \mathbf{\hat{x}}_2 + \frac{2t}{\sqrt{3}} \mathbf{\hat{x}}_3 - \mathbf{\hat{x}}_3\|_2} \\
			& \quad \quad + \frac{m_1}{\| - 2 s \mathbf{\hat{x}}_2 + \frac{2t}{\sqrt{3}} \mathbf{\hat{x}}_3 - \mathbf{\hat{x}}_1\|_2}  + \frac{m_2}{\| - 2 s \mathbf{\hat{x}}_2 + \frac{2t}{\sqrt{3}} \mathbf{\hat{x}}_3 - \mathbf{\hat{x}}_2\|_2}  \\
			&= \frac{1}{2} \| 2 s \mathbf{\hat{x}}_2 + \frac{2t}{\sqrt{3}} \mathbf{\hat{x}}_3  \kap \mathbf{\hat{x}}_3 \|_2^2 + \frac{m_3}{\| 2 s \mathbf{\hat{x}}_2 + \frac{2t}{\sqrt{3}} \mathbf{\hat{x}}_3 - \mathbf{\hat{x}}_3\|_2} \\
			& \quad \quad + \frac{m_2}{\|  2 s \mathbf{\hat{x}}_2 + \frac{2t}{\sqrt{3}} \mathbf{\hat{x}}_3 - \mathbf{\hat{x}}_2\|_2}  + \frac{m_1}{\|  2 s \mathbf{\hat{x}}_2 + \frac{2t}{\sqrt{3}} \mathbf{\hat{x}}_3 -\mathbf{\hat{x}}_1\|_2}  \\
			&= \Omega(T^{-1}(\Theta(s,t)))
\end{align*}
We define $\Phi(s,t) \bydef \Omega(T^{-1}(\Theta(s,t)))$, and our bifurcation point lies in translated coordinates on the line $\mathbf{\hat{x}}_3$, 
i.e.\ $\Omega(x_0,y_0) = \Phi(0,t)$ for some $t \in \mathbb{R}$. From the symmetry $\Phi(-s,t) = \Phi(s,t)$ we obtain
\begin{align*}
\Phi_{s}(0,t) &= - \Phi_{s}(0,t) = 0,  \\
\Phi_{st}(0,t) &= - \Phi_{st}(0,t) = 0, \\
\Phi_{sss}(0,t) &= - \Phi_{sss}(0,t) = 0.
\end{align*}
Let $\lambda = \sqrt{3} (1 - 2 m_1) = \sqrt{3} m_3$.
Using the chain rule we have
\begin{align}
2M \Phi_{s} (0,t) &= \Omega_x - \lambda \Omega_y = 0, \label{eq:rightendpointderivativeszero1}\\
4 M^2 \Phi_{st} (0,t) &= \lambda \Omega_{xx} + (1 - \lambda^2) \Omega_{xy} - \lambda \Omega_{yy} = 0, \label{eq:rightendpointderivativeszero2} \\
8M^3 \Phi_{sss} (0,t) &= \Omega_{xxx} - 3 \lambda \Omega_{xxy} + 3 \lambda^2 \Omega_{xyy} - \lambda^3 \Omega_{yyy} = 0. \label{eq:rightendpointderivativeszero3}
\end{align}
Furthermore, since we are at a bifurcation point, we also have $\Omega_{xx} \Omega_{yy} = \Omega_{xy}^2$, hence Equation \eqref{eq:rightendpointderivativeszero2} becomes
\begin{align*}
\lambda \Omega_{xx}^2 + ( 1 - \lambda^2) \Omega_{xy}\Omega_{xx} - \lambda \Omega_{xy}^2 = 0.
\end{align*}
Therefore, we either have $\Omega_{xx} = \lambda \Omega_{xy}$, thus also 
$\Omega_{xy} = \lambda \Omega_{yy}$, or $\Omega_{xx} = -\lambda^{-1} \Omega_{xy}$, and also 
$\Omega_{xy} = -\lambda^{-1} \Omega_{yy}$. To establish that we have $\Omega_{xx} = \lambda \Omega_{xy}$ instead of $\Omega_{xx} = -\lambda^{-1} \Omega_{xy}$, we use Newton's method to find the root of 
\begin{align*}
(\Omega_x( \Theta(0,t)) ,  \Omega_{xx}( \Theta(0,t))  - \lambda \Omega_{xy}( \Theta(0,t))  ),
\end{align*}
which defines the bifurcation parameters $v_{c_1}$ -- 
and libration point $L_{c_1} = (\Theta(0,t)_1,0,\Theta(0,t)_2,0)$. To see this, from Equation \eqref{eq:rightendpointderivativeszero1}
 it follows that $\Omega_y = 0$, i.e.\ $\Theta((0,t))$ is indeed a fixed point. Furthermore, from Equation \eqref{eq:rightendpointderivativeszero2} it also follows that $\Omega_{xy} = \lambda \Omega_{yy}$, 
 and hence $\Omega_{xx} \Omega_{yy} = \Omega_{xy}^2$, thus $\Theta((0,t))$ is not only a fixed point, but the 
 linearization of the vector field at $\Theta((0,t))$ has a double eigenvalue $0$.

Exploiting again that
$\Omega_{xx} = \lambda \Omega_{xy} = \lambda^2 \Omega_{yy}$, 
the constant $\mathscr{E}$ becomes 
\begin{align*}
\mathscr{E} &= \Omega_{xxx} \Omega_{yy}^3 - 3 \Omega_{xxy} \Omega_{yy}^2 \Omega_{xy} + 3 \Omega_{xyy} \Omega_{yy} \Omega_{xy}^2 - \Omega_{yyy} \Omega_{xy}^3 \\
			&= \left( \Omega_{xxx}  - 3 \lambda \Omega_{xxy}  + 3 \lambda^{2} \Omega_{xyy}  - \lambda^{3} \Omega_{yyy} \right) \Omega_{yy}^3 \\
			&= \frac{\Omega_{yy}^3}{8 M^3} \Phi_{sss}(0,t) \\
			&= 0.
\end{align*}
Hence, Lemma \ref{thm: centerstablebranchformalseries} cannot be applied to find 
(un)stable solution branches on the center manifold, which further supports the claim 
that $L_c$ undergoes a pitchfork bifurcation at $v_{c_1}$.

\subsection{Generic saddle node bifurcation at $v_{c_2}$} \label{sec:near_v2}

In our numerical scheme, we use Newton's method to find a root of
\begin{align*}
(\Omega_x, \Omega_y , \Omega_{xx} \Omega_{yy} - \Omega_{xy}^2),
\end{align*}
where we fix $m_1 = 0.4247$, and consider $m_2$ as the only parameter.  This results in the bifurcation parameters $v_{c_2}$. In Figure \ref{fig: branches4247} we plot the Taylor polynomials of the stable 
and unstable branch, together with two orbits starting close to the stable branch. This allows us to 
check the branches and take $t_s$ and $t_u$ in step \ref{step6} as large as possible. The results 
of the numerical integration done in step \ref{step6} is illustrated in Figure \ref{fig: homoclinic4247} .

\begin{figure}[!h]
\centering
\includegraphics[height = 0.35  \textheight]{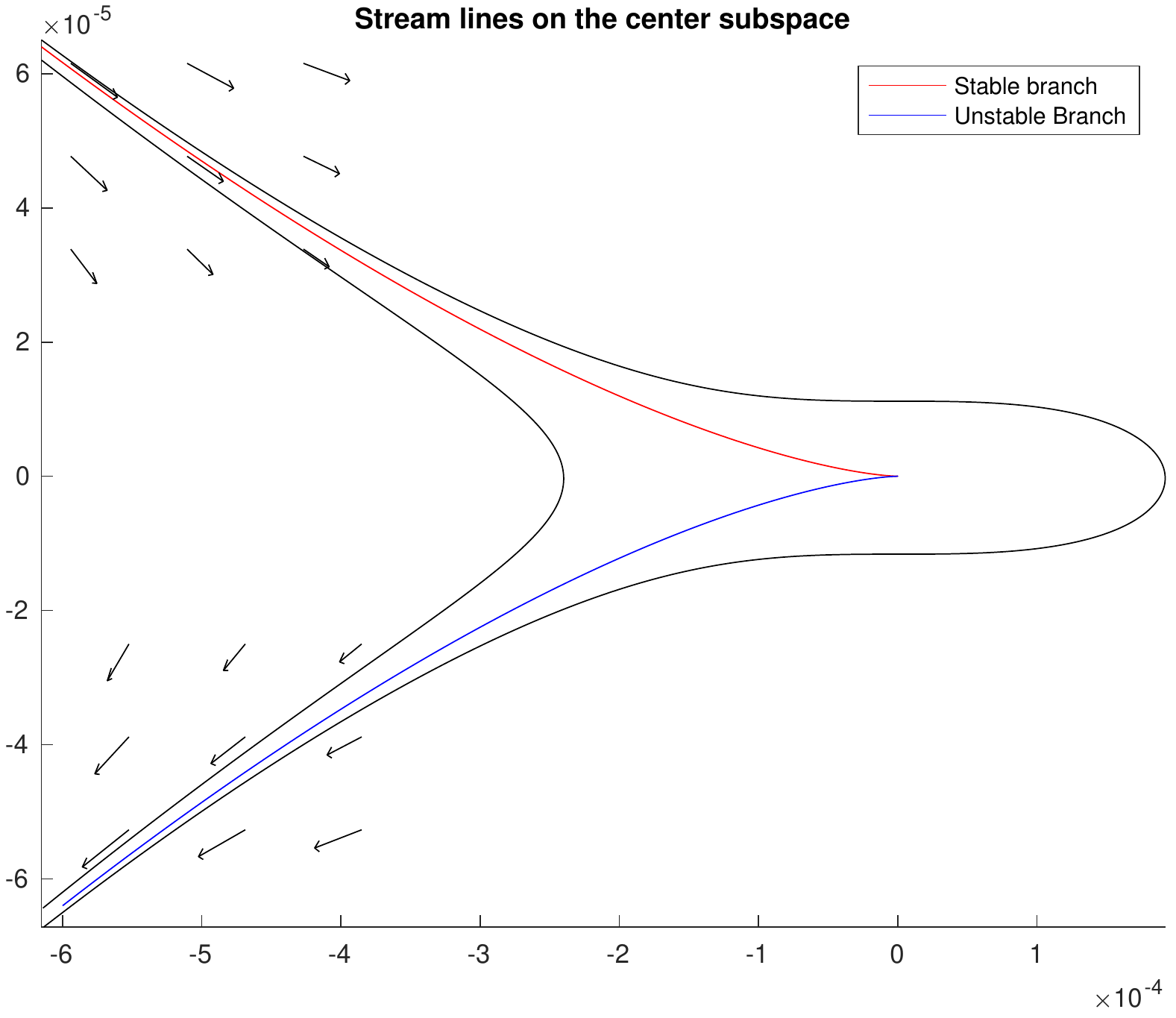} 
\caption{The approximation of the solution branches on the center manifold for the
parameters $v_{c_2}$ on the bifurcation curve.}
\label{fig: branches4247}
\end{figure}

\begin{figure}[!t]
\begin{tabular}{cc}
\includegraphics[width = 0.45  \textwidth]{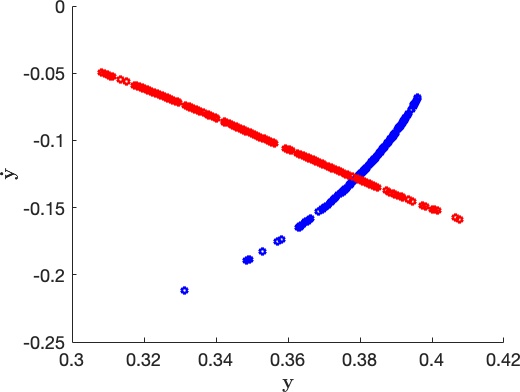}  & 
\includegraphics[width = 0.45  \textwidth]{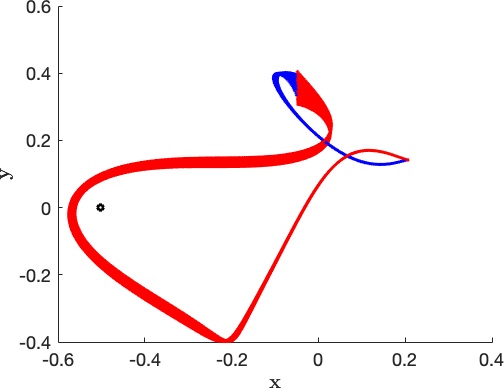}  \\
\includegraphics[width = 0.45  \textwidth]{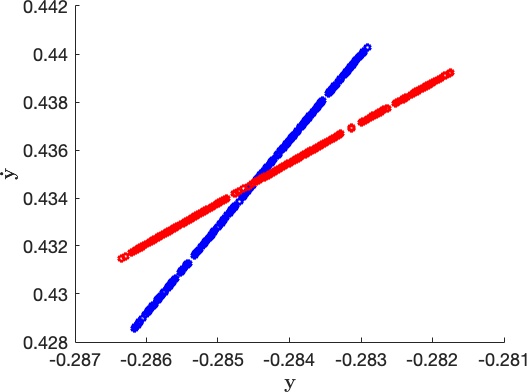}  & 
\includegraphics[width = 0.45  \textwidth]{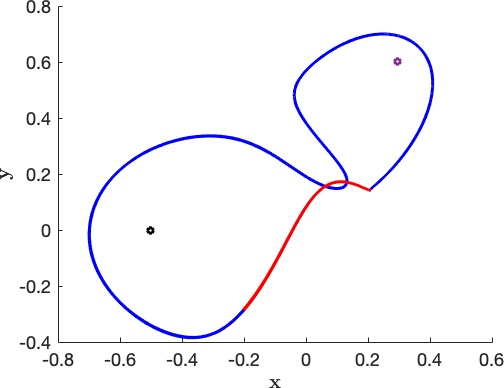}  \\
\end{tabular}
\caption{\textbf{Numerical integration of the stable and unstable manifold at $v_{c_2}$:} We found 
two ''large'' homoclinic orbits of $L_c$ by numerically integrating the (un)stable manifold. The red 
surface is the backward flow of part of the edge of stable manifold, and the blue surface is the 
forward flow of part of the edge of the unstable manifold. In the top left figure, we plot the 
intersection of the forward and backward flow and the line $x = -0.05$ with positive $x$-derivative. 
As there is a transverse connection in the $(y,\dot{y})$-plane, we find the enclosure of a 
homoclinic orbit, which is the top right figure.
In the bottom left figure, we plot the intersection of the forward and backward flow and the 
line $x = -0.2$ with positive $x$-derivative. As there is a transverse connection in the 
$(y,\dot{y})$-plane, we find the enclosure of a homoclinic orbit, which is the bottom right figure. }
\label{fig: homoclinic4247}
\end{figure}

\subsection{The non-generic saddle node bifurcation at $v_{c_3}$} \label{sec:nonGeneric}

To obtain the bifurcation parameters $v_{c_3}$, we will exploit that the planar circular restricted four body problem has symmetry on the edge $m_2 = m_3$. We have $\Omega(x,y) = \Omega(x,-y)$, and thus also $\Omega_x(x,-y) = \Omega_x(x,y)$, $\Omega_y(x,-y) = -\Omega_y(x,y)$, and $\Omega_{xy}(x,-y) = -\Omega_{xy}(x,y)$. In particular, we have $\Omega_y(x,0) = 0$ and $\Omega_{xy}(x,0) = 0$. We can therefore in step \ref{step1} use Newton' method to find the root of 
\begin{align*}
(\Omega_x(x,0), \Omega_{xx}(x,0))
\end{align*}
along $\ell_3(s)$ to obtain the bifurcation parameters $v_{c_3}$ and bifurcation point $L_c  = (x_0,0,0,0)$.

The symmetry also motivates us to define the linear map 
$A :(x,\dot{x},y,\dot{y}) \mapsto (x , - \dot{x}, - y, \dot{y})$, 
yielding the symmetry $\dynamics(A\boldx) = - A \dynamics(\boldx)$. 
So, any orbit that starts on the plane $(x,0,0,\dot{y})$ has its backward orbit given by $x(-t) = A x(t)$. 
In other words, an orbit that connects the unstable boundary and the plane $(x,0,0,\dot{y})$ is a symmetric
homoclinic orbit for the critical equilibrium: we do not have to integrate the stable boundary backwards. 
We show that the unstable manifold has an intersection with $(x,0,0,\dot{y})$, and we obtain the 
homoclinic orbits in Figure \ref{fig: homoclinic4234} by flipping the orbits along the symmetry axis. 

\begin{figure}[!t]
\begin{tabular}{cc}
\includegraphics[width = 0.47 \textwidth]{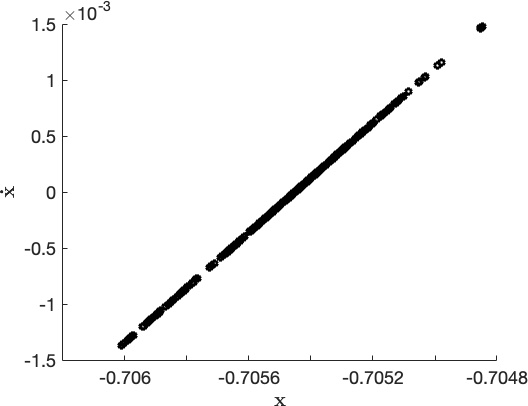} & 
\includegraphics[width = 0.47  \textwidth]{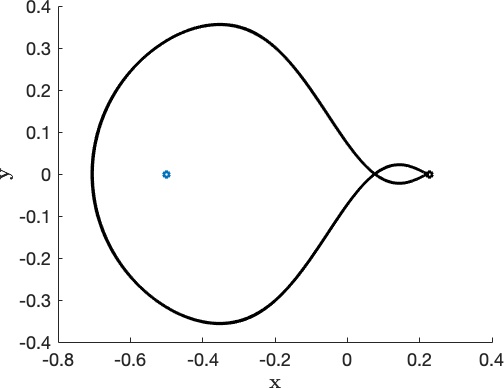} \\
\includegraphics[width = 0.47  \textwidth]{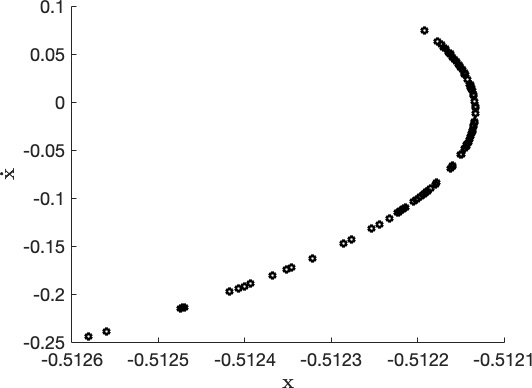} & 
\includegraphics[width = 0.47  \textwidth]{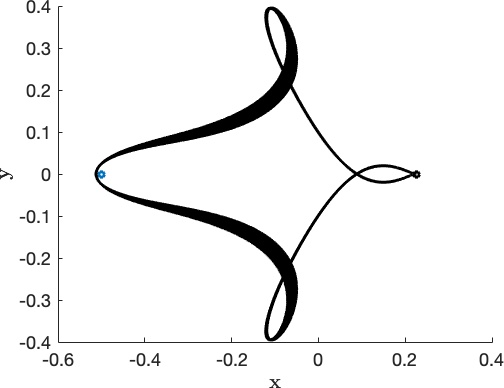} \\
\includegraphics[width = 0.47  \textwidth]{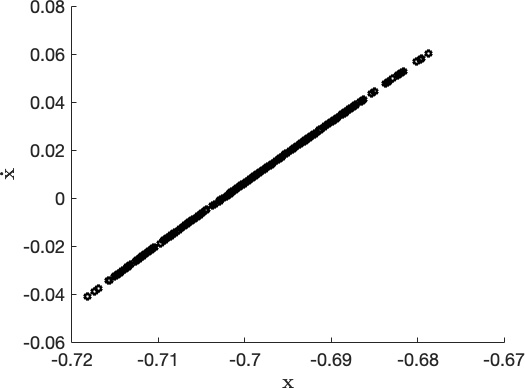} & 
\includegraphics[width = 0.47  \textwidth]{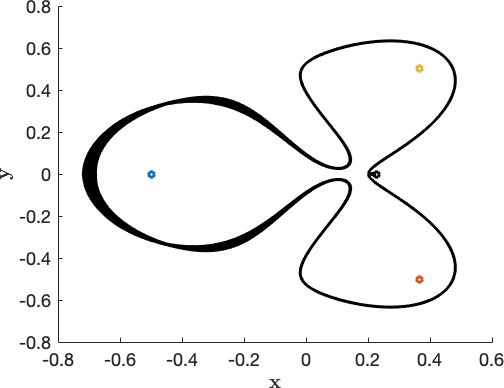} \\
\end{tabular}
\caption{\textbf{Numerical integration of the stable and unstable manifold at $v_{c_3}$:} We found three ''large'' homoclinic orbits of $L_c$ by numerically integrating the unstable manifold. In the left figures, we plot the intersection of forward flow of part of the edge of the unstable manifold and the line $y =0$.  In all three figures we see that in the $(x,\dot{x})$-plane we have a transverse intersection with the line $\dot{x} = 0$. Hence due to symmetry we obtained a region in which a homoclinic orbit lies. In the right figures we plotted the corresponding region in which a homoclinic orbit lies.}
\label{fig: homoclinic4234}
\end{figure}

These results suggest a rich transverse homoclinic orbit 
structure at $\mathfrak{D}$.  Not all of the homoclinic orbits found at 
the critical curve have shapes reminiscent of the basic triple Copenhagen 
homoclinics $\gamma_{1,2,3,4,5,6}$.  For example, the middle right frame of 
Figure \ref{fig: homoclinic4234} illustrates a homoclinic orbit with no 
apparent analogue in the triple Copenhagen problem.


\section{Conclusions} \label{sec:conclusions}
Informed by the numerical explorations discussed in the main text of the paper,
we propose the following conjectures concerning the global dynamics of the 
CRFBP.  First, and based on the observation in Remark \ref{rem:belDevBif}, 
we conjecture that each of the short triple Copenhagen homoclinic orbits 
undergoes a type II bifurcation.

\begin{conjecture}
Any parameter continuation of the triple Copenhagen 
homoclinic orbits $\gamma_{1,2,3}$ toward the critical curve $\mathfrak{D}$
results in a Belyakov-Devaney bifurcation before the termination of the family.
\end{conjecture}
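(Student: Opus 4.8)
The plan is to reduce the conjecture to a spectral analysis of the linearization $Df(L_0(s))$ along each parameter line $\ell_{j,k}$, an analysis governed entirely by the two invariants appearing in the characteristic polynomial \eqref{eq: CharacteristicPolynomial}. Writing $\tau(s) = 4 - \Omega_{xx} - \Omega_{yy}$ and $\delta(s) = \Omega_{xx}\Omega_{yy} - \Omega_{xy}^2$, all evaluated at the continued point $L_0(s)$, the polynomial is a quadratic in $\lambda^2$ with roots $\lambda^2 = \tfrac{1}{2}(-\tau \pm \sqrt{\Delta})$ where $\Delta = \tau^2 - 4\delta$. This yields a complete dictionary for the eigenvalue type: $\Delta < 0$ gives a bi-focus; $\Delta > 0$ with $\delta < 0$ gives saddle $\times$ center; $\Delta > 0$, $\delta > 0$, $\tau < 0$ gives a real saddle with two-dimensional stable and unstable manifolds; and $\Delta > 0$, $\delta > 0$, $\tau > 0$ gives center $\times$ center. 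The two degenerate transitions live on $\{\Delta = 0\}$: a double real eigenvalue pair (the Belyakov--Devaney configuration of Section \ref{sec:bifurcations}) when $\tau < 0$, and a double imaginary pair (a $1{:}1$ resonance, Hamiltonian--Hopf) when $\tau > 0$.

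With this dictionary in hand I would first fix the endpoints. At $v_0$ the point $L_0$ is a bi-focus, so $\Delta(0) < 0$. On the critical curve Proposition \ref{thm: bifurcationpoint} gives $\delta = 0$, the accompanying claim gives $\tau \neq 0$, and since the critical spectrum at the Hamiltonian saddle node is $\{0,0,\pm\alpha\}$ with $\alpha$ real, the nonzero root $-\tau$ is positive, whence $\tau < 0$ and $\Delta = \tau^2 > 0$ on $\mathfrak{D}$. The crucial soft step is to exclude the Hamiltonian--Hopf crossing: so long as the homoclinic $\gamma_{j,k}(s)$ exists, $L_0(s)$ possesses stable and unstable manifolds of positive dimension and so cannot be a center $\times$ center; hence the parameter cannot pass through a $\{\Delta = 0,\ \tau > 0\}$ point while the family is alive. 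Consequently the first zero $s_{BD}$ of $\Delta(s)$ attained on the lifespan of $\gamma_{j,k}$ must satisfy $\tau(s_{BD}) < 0$, which is exactly the Belyakov--Devaney configuration. Moreover $L_0$ remains a hyperbolic $2+2$ saddle on both sides of $s_{BD}$, so the homoclinic persists across the bifurcation, consistent with Remark \ref{rem:belDevBif}.

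What remains, and what I expect to be the principal obstacle, is the ordering $s_{BD} < \hat{s}_{j,k}$, i.e. that the real-saddle transition genuinely precedes the death of the homoclinic. For families that survive to $\mathfrak{D}$ (such as $\gamma_2$, and by symmetry $\gamma_{3,1}$) this is automatic, since $\Delta(1) > 0$ forces a crossing at some $s_{BD} < 1 = \hat{s}_{j,k}$. For families that terminate early by loss of transversality (the type IV scenario for $\gamma_1$, where $\hat{s}_{1,k} < 1$) the death is driven by the connecting-orbit geometry rather than by the spectrum, so one must show separately that $\Delta$ has already turned positive at termination, i.e. $\Delta(s) > 0$ for some $s < \hat{s}_{j,k}$. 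This is a genuinely quantitative claim, and I would establish it by a computer-assisted interval-arithmetic evaluation of $\tau(s)$ and $\delta(s)$, hence of $\Delta(s)$, along each line $\ell_{j,k}$, combined with a validated enclosure of $\hat{s}_{j,k}$ obtained from rigorous continuation of $\gamma_{j,k}$. Verifying the strict inequality $s_{BD} < \hat{s}_{j,k}$ inside these enclosures is what would upgrade the numerical observation of Remark \ref{rem:belDevBif} to a theorem.

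Finally, to legitimately invoke the Belyakov--Devaney theorem \cite{MR1240679} one must check its non-degeneracy hypotheses at $s_{BD}$: that the repeated real eigenvalue is defective, with geometric multiplicity one and algebraic multiplicity two (which follows from the $\lambda^2$ structure together with an explicit eigenvector computation of the same flavor that produced $\bv_0,\bv_1$ in Proposition \ref{thm: bifurcationpoint}); that the crossing is transversal, $\Delta'(s_{BD}) \neq 0$; and that $\gamma_{j,k}$ is a smooth homoclinic family across $s_{BD}$. Each of these is a generic condition reducible to a sign check on quantities already available, and I would expect all three to be confirmed by the same validated evaluation used for the ordering step.
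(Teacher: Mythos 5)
The first thing to be clear about is that the statement you set out to prove is, in the paper, a \emph{conjecture}: the authors' entire support for it is the numerical observation of Remark \ref{rem:belDevBif} (along each of the lines $\ell_{1,2,3}(s)$ the equilibrium $L_0(s)$ is seen to pass from bi-focus to real saddle before the continuation fails), and Section \ref{sec:conclusions} explicitly defers its resolution to future computer-assisted methods of proof. So there is no proof in the paper to compare yours against; you are proposing a route to a theorem the authors leave open. That said, your spectral reduction is correct and is a sharpened form of what the paper observes: the characteristic polynomial \eqref{eq: CharacteristicPolynomial} is a quadratic in $\lambda^2$ with coefficients $\tau = 4 - \Omega_{xx} - \Omega_{yy}$ and $\delta = \Omega_{xx}\Omega_{yy} - \Omega_{xy}^2$; your four-way classification in terms of $\Delta = \tau^2 - 4\delta$ is right; and the endpoint signs agree with the paper's own formulas, since $L_0$ is a bi-focus at $v_0$ (so $\Delta < 0$ there), while on $\mathfrak{D}$ Proposition \ref{thm: bifurcationpoint} gives $\delta = 0$ and the reality of the hyperbolic eigenvalues $\lambda_{\pm} = \mp\sqrt{-4 + \Omega_{xx} + \Omega_{yy}}$ used in Appendix \ref{sec:formalSeries} forces $\tau < 0$, hence $\Delta = \tau^2 > 0$.

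However, three genuine gaps separate your outline from a proof, and the third is exactly the content that makes this a conjecture rather than a theorem. (i) The statement quantifies over \emph{any} continuation toward $\mathfrak{D}$, i.e.\ over arbitrary parameter paths in $\mathfrak{S}_I$, whereas your interval-arithmetic program treats only the three lines $\ell_{j,k}$; to capture ``any'' you would need the global separation statement that the curve $\mathfrak{D}'$ (where $\Delta = 0$) lies entirely on the $v_0$ side of the termination curves $\mathfrak{D}_{1,2,3}$ --- but that is precisely the paper's own stronger conjecture (Figure \ref{fig:parameterSimplexFinal}), so invoking it would be circular; it would itself have to be established, say by validated enclosures of the two families of curves in the simplex. (ii) Your exclusion of the Hamiltonian--Hopf alternative is softer than you claim: existence of a homoclinic orbit does not by itself force positive-dimensional stable/unstable manifolds (orbits can be asymptotic to non-hyperbolic equilibria), and, more seriously, your argument leaves open the scenario in which the family terminates \emph{exactly at} a crossing of $\{\Delta = 0,\ \tau > 0\}$, in which case no Belyakov--Devaney bifurcation precedes termination and the conjecture fails along that path; closing this loophole requires something like a verified sign condition $\tau(s) < 0$ in a neighborhood of the first zero of $\Delta$. (iii) The ordering $s_{BD} < \hat{s}_{j,k}$ for the early-terminating families ($\gamma_1$, and $\gamma_3$ off the symmetric edges) rests on a ``validated enclosure of $\hat{s}_{j,k}$ obtained from rigorous continuation,'' but no such object exists: the paper's bounds on $\hat{s}_{j,k}$ are heuristic (Newton breakdown from below in Section \ref{sec:numCont}, the blue-sky test from above in Section \ref{sec:blueSkies}), and rigorously enclosing a parameter of loss of transversality means validating a degenerate (type IV) connection --- exactly the kind of computer-assisted result the paper identifies as beyond the current state of the art. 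In short, your plan is sensible, consistent with the paper's evidence, and would indeed upgrade Remark \ref{rem:belDevBif} to a theorem if carried out; but as it stands it is a research program with the hard steps deferred, not a proof.
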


Recall that when the $\gamma_1$ family of homoclinic orbits is continued 
along one of the three parameter lines $\ell_{1,2,3}(s)$, the 
numerical continuation breaks down before the critical curve.  
The $\gamma_1$ family exhibits the least robustness with respect to 
parameter continuation along these three parameter lines
and we conjecture that this 
behavior is general.  

\begin{conjecture}
The $\gamma_1$ family \textit{never} continues to the critical curve $\mathfrak{D}$.
\end{conjecture}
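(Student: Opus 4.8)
The plan is to prove the conjecture by reducing the termination of the $\gamma_1$ family to a loss of transversality on the critical curve, and then excluding a $\gamma_1$-type critical homoclinic there. First I would note that along any parameter path terminating on $\mathfrak{D}$, the equilibrium around which $\gamma_1$ is organized does not vanish: by statement (V) of the introduction and Proposition \ref{thm: bifurcationpoint}, it is $L_0$, $L_2$ (and $L_3$ at $v_{{\tiny \mbox{pf}}}$) that collide on $\mathfrak{D}$, while $L_1$ persists as a saddle $\times$ center and $L_0$ merely degenerates into the critical point $L_c$. Hence scenario (B) is unavailable to the $\gamma_1$ family, and any breakdown must be a type IV loss of transversality, i.e.\ scenario (A). The conjecture thus reduces to showing that a transverse homoclinic of $\gamma_1$-type -- a connection to $L_0$ whose loop encircles $L_1$ exactly once -- cannot persist all the way to $\mathfrak{D}$.

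Next I would rule out the existence of \emph{any} $\gamma_1$-type homoclinic at the critical curve itself. On $\mathfrak{D}$ the point $L_c$ carries, by Proposition \ref{thm: bifurcationpoint}, a double zero eigenvalue and therefore three-dimensional center-stable and center-unstable manifolds, whose restriction to the $L_c$ energy level gives two surfaces whose intersections are exactly the critical homoclinics. Using the parameterized center and center-stable/unstable manifolds of Lemmas \ref{thm: centerconjugacyformalseries}--\ref{thm: centerstableconjugacyformalseries} together with the branch functions $\stablebranch$ and $\unstablebranch$, I would carry out the intersection computation of Section \ref{sec:seperationOfD_and_D3} in validated arithmetic, uniformly over a rigorous enclosure of the whole arc $\mathfrak{D}$. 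The target is the statement that for every $v_c \in \mathfrak{D}$ there is no transverse intersection of the flowed stable and unstable edges producing a single loop about $L_1$; the computations of Section \ref{sec:seperationOfD_and_D3}, which see only $\gamma_3$-type and longer orbits on $\mathfrak{D}$, are the numerical shadow of this claim.

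Finally I would close the argument by contradiction: if for some parameter line $\ell_k$ the family $\gamma_{1,k}(s)$ remained transverse for all $s<1$, I would extract a limit as $s\to 1$ and identify it as a critical $\gamma_1$-type homoclinic at $L_c$, contradicting the previous step. The hard part will be controlling this limit, and I expect it -- rather than the manifold intersection -- to be the main obstacle. The blue sky data of Section \ref{sec:blueSkies} indicate that the $\gamma_1$ loop does not shrink as $s\to\hat s_{1,k}$ but instead elongates, with the associated Lyapunov tube running into a collision with the smallest primary; the family may therefore lose compactness through a near-collision rather than converging to a tame critical orbit. Making the limit rigorous would then require either a regularization of the collision set, in the spirit of \cite{MR3239345}, to compactify the relevant space of connecting orbits, or an a priori arc-length/energy bound excluding escape of the loop -- precisely the global control that the present work establishes only numerically.
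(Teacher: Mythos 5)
The statement you set out to prove is one of the paper's \emph{conjectures}: the paper contains no proof of it, and explicitly says (Section \ref{sec:conclusions}) that resolving its conjectures would require computer-assisted methods of proof going beyond the current state of the art. What the paper actually offers is numerical bracketing along three fixed parameter lines: a lower bound on the termination parameter $\hat{s}_{1,k}$ from the breakdown of Newton-based continuation (Section \ref{sec:numCont}: $0.74$, $0.9247$, $0.974$ along $\ell_{1,2,3}(s)$), an upper bound from the blue sky test (Section \ref{sec:blueSkies}: $\hat{s}_{1,1}\in(0.74,0.78)$, $\hat{s}_{1,2}\in(0.9,0.95)$, $\hat{s}_{1,3}\in(0.97,1)$), and the extrapolation that $\gamma_1$ therefore always dies in a type IV bifurcation strictly before $\mathfrak{D}$. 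Your proposal follows a different route (rigorous exclusion of critical $\gamma_1$-type homoclinics at $L_c$ over all of $\mathfrak{D}$, plus a limit argument), which is closer in spirit to the paper's Section \ref{sec:seperationOfD_and_D3} analysis for $\gamma_3$; but since there is no proof in the paper to match, your attempt must stand on its own as a proof, and it does not.

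The gaps are concrete. First, your step 3 is the whole difficulty and remains open, as you admit: if $\gamma_{1,k}(s)$ stayed transverse for all $s<1$ there is no compactness forcing a limiting object that is a single orbit homoclinic to $L_c$. The limit can be a non-transverse or broken connection (a chain), the time of flight can blow up, or the loop can run into a collision with a primary -- the paper's own blue sky data show the nearby Lyapunov tubes terminating in collision with the third primary -- and no a priori arc-length, energy, or collision-distance bound is available; acknowledging this obstacle does not remove it. Second, your step 2 is not executable as stated: excluding \emph{every} $\gamma_1$-type intersection of the center-stable and center-unstable manifolds in the $L_c$ energy level, uniformly over the whole arc $\mathfrak{D}$, by a finite validated computation requires exactly the same missing global bounds (on integration time and on the region a putative connection can explore); the computations of Section \ref{sec:seperationOfD_and_D3} are finite-time, local near $L_3$, and qualitative, and moreover excluding only \emph{transverse} intersections would not suffice, since the limit extracted in step 3 need not be transverse. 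Third, a smaller but real slip in your reduction: disappearance of the underlying equilibrium (scenario (B)) is precisely what happens on $\mathfrak{D}$, where $L_0$ and $L_2$ collide and annihilate; the correct statement, and the one the paper makes, is only that (B) cannot occur \emph{before} $\mathfrak{D}$, so that any earlier termination must be a loss of transversality. As it stands, your text is a plausible research program -- essentially the one the paper itself sketches in its conclusions -- not a proof.
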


The $\gamma_2$ family on the other hand exhibits the most robustness, and we find that
we are able to continue along each of the three parameter lines almost until 
the homoclinic orbits shrink to points.  Moreover, we confirmed by considering the 
normal form that when $L_2$ is close to $L_0$ 
there is a short homoclinic orbit for $L_0$ which winds around $L_2$.
We conjecture that this is the general picture.

\begin{conjecture}
The $\gamma_2$ family \textit{always} continues to the critical curve $\mathfrak{D}$, 
where it vanishes with $L_0$ and $L_2$ in the saddle node bifurcation.
\end{conjecture}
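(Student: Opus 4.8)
The plan is to reduce the global statement to a local normal form computation at the critical libration point, and then to upgrade the three sampled parameter lines to all of $\mathfrak{S}_I$ by a connectedness argument. Fix a generic critical parameter $v_c \in \mathfrak{D}$ with $m_1 \neq m_2$. By Proposition \ref{thm: bifurcationpoint} the linearization $Df(L_c)$ carries a double zero eigenvalue of geometric multiplicity one together with a real pair $\pm\alpha$, so the center manifold $W_c$ is two dimensional and, for subcritical parameters, both $L_0$ and $L_2$ persist as equilibria on the continuation of $W_c$. The decisive local input is the constant $\mathscr{E} = \Omega_{xxx}\Omega_{yy}^3 - 3\Omega_{xxy}\Omega_{yy}^2\Omega_{xy} + 3\Omega_{xyy}\Omega_{yy}\Omega_{xy}^2 - \Omega_{yyy}\Omega_{xy}^3$: by Lemma \ref{thm: centerstablebranchformalseries}, $\mathscr{E} \neq 0$ (numerically negative for $m_1\neq m_2$) certifies that the reduced vector field on $W_c$ is the genuine Hamiltonian saddle node normal form $(x,y)\mapsto(y,\,s+\alpha_1 x^2)$ with $\alpha_1\neq 0$, as tabulated in Table 1.

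Next I would analyze this planar normal form on its own terms. On the side of the parameter where two equilibria survive, the reduced system has $L_0$ as a saddle and $L_2$ as a center, and the saddle carries a single homoclinic separatrix loop that encloses $L_2$; as the parameter tends to criticality the two equilibria merge and the loop shrinks to a point. The matching step is then to identify the continued family $\gamma_{2,k}(s)$, for $s$ near $\hat s_{2,k}$, with exactly this separatrix. Since $\gamma_2$ is by construction homoclinic to $L_0$ and winds once around $L_2$, the geometry agrees, and the center manifold calculations of Section \ref{sec:normalform} are what place the continued orbit on $W_c$. This establishes $\hat s_{2,k}=1$ locally and exhibits the claimed collapse of $\gamma_2$ together with $L_0$ and $L_2$, reproducing Figure \ref{fig:Saddle_Node}.

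To replace ``along $\ell_{1,2,3}$'' by ``always,'' I would argue by connectedness within $\mathfrak{S}_I$. Persistence of $\gamma_2$ as a transverse homoclinic in the $L_0$ energy level is an open condition (Section \ref{sec:copenhagenHomoclinics}), and the accumulation of the $L_2$ planar Lyapunov tube onto $\gamma_2$ is the robust Type I blue sky phenomenon of Section \ref{sec:bifurcations}; hence the parameter set on which $\gamma_2$ both persists and limits onto the saddle node separatrix is open. One then shows this set is relatively closed away from a neighborhood of $\mathfrak{D}$, so it exhausts the region. The symmetric endpoint $v_{{\tiny \mbox{pf}}}$ is treated separately: there $\mathscr{E}=0$ by the symmetry computation of Section \ref{sec:pitchfork}, and the reflection relating $\gamma_{2,1}$ to $\gamma_{3,1}$ forces both small homoclinics to vanish together in the pitchfork, which is consistent with the conclusion of the conjecture.

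The hard part will be the closedness half of this argument, namely ruling out that $\gamma_2$ suffers a Type IV degenerate homoclinic bifurcation, or peels off the center manifold, at some interior parameter. The Belyakov-Devaney transition of $L_0$ from bi-focus to real saddle (Remark \ref{rem:belDevBif}) is exactly where this could break, since the stable and unstable manifolds of $L_0$ change character as the complex eigenvalues split; certifying that $\gamma_2$ survives this codimension one event along \emph{every} path to $\mathfrak{D}$, rather than only the sampled ones, is the obstruction that keeps the statement conjectural and would most plausibly demand a computer-assisted enclosure of the full continuation.
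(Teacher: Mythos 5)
You should note first that the statement you set out to prove is stated in the paper as a \emph{conjecture}: the paper never proves it, and its ``proof'' consists of (i) the numerical continuations of Section \ref{sec:continuation_gamma2}, where $\gamma_2$ continues along all three lines $\ell_{1,2,3}(s)$ until the loops shrink to diameter of order $10^{-4}$, and (ii) the center manifold/normal form computations of Section \ref{sec:normalform} (Proposition \ref{thm: bifurcationpoint}, Lemma \ref{thm: centerstablebranchformalseries}, Table 1, Figures \ref{fig:Saddle_Node} and \ref{fig:PitchFork}), which exhibit the small separatrix loop of the Hamiltonian saddle node enclosing $L_2$ and identify the terminal $\gamma_{2,k}$ orbits with it, treating the pitchfork endpoint separately via $\mathscr{E}=0$. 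Your local analysis is essentially this same route, ingredient for ingredient. What you add --- the open/closed connectedness scaffold meant to upgrade ``along three sampled lines'' to ``always'' --- is genuinely new relative to the paper, and you correctly locate where it fails: closedness cannot be established, because nothing rules out a Type IV degenerate homoclinic bifurcation of $\gamma_2$ at an interior parameter of $\mathfrak{S}_I$ away from $\mathfrak{D}$. That is exactly why the paper leaves the statement conjectural, so your concluding concession is the right assessment rather than an oversight.

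Two cautions on the parts you treat as settled. First, your matching step (``the geometry agrees'') is not a proof that the continued family $\gamma_{2,k}(s)$ coincides with the normal-form separatrix or even lies near the local center manifold: the paper's own analysis of $\gamma_{3,2}$ (Section \ref{sec:normalform} and Section \ref{sec:seperationOfD_and_D3}) shows that an orbit homoclinic to $L_0$ winding once around an inner libration point can persist toward $\mathfrak{D}$ \emph{without} lying on the local center manifold, so winding number plus homoclinicity does not force the identification. A genuine argument would need the standard fact that any orbit remaining in a sufficiently small neighborhood of $L_c$ for all time lies on a local center manifold, together with a proof that the continued family actually shrinks --- which is again the unproven global ingredient. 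Second, Lemma \ref{thm: centerstablebranchformalseries} and $\mathscr{E}\neq 0$ concern the structure at the fixed critical parameter; the $s$-dependent unfolding $(y,\,s+\alpha_1 x^2)$ you invoke additionally requires that the parameter lines cross $\mathfrak{D}$ versally, a transversality of the unfolding that the paper only verifies numerically (Table 1). Neither caveat makes your proposal wrong as a marshaling of evidence; they mark the places where, like the paper, you are relying on numerics rather than proof.
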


The behavior of the $\gamma_3$ family is the most complicated.  
For parameters on the $m_2= m_3$ edge of the parameter simplex the problem 
has symmetry about the $x$-axis.  Then the behavior of $\gamma_3$ on the 
$\ell_3(s)$ line mirrors the 
behavior of $\gamma_1$, which breaks down before $\mathfrak{D}$.  
Similarly, for parameters
on the $m_1 = m_2$ edge of the parameter simplex the problem has symmetry about the line 
through the third primary bisecting the edge of the triangle connecting the first to 
the second primary.  In this case the 
behavior of $\gamma_3$ on the $\ell_1(s)$ line mirrors the behavior 
$\gamma_2$, which continues all the way to $\mathfrak{D}$ where it 
disappears.  Moreover, the normal form calculation suggests that there is 
a ``short'' homoclinic around $L_3$ for parameter values near the pitch-fork
bifurcation, and our numerics confirm this, with a shape suggestive of the $\gamma_3$ family.
We conjecture that the discussion above tells the full story.

\begin{conjecture}
The $\gamma_3$ family of homoclinics continues to the critical curve $\mathfrak{D}$
when $m_2 \approx m_3$ and does not reach $\mathfrak{D}$ when $m_1 \approx m_2$.
There a single parameter value on $\mathfrak{D}$ separating these behaviors.
\end{conjecture}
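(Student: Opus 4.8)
The plan is to split the analysis at the two ends of the critical arc $\mathfrak{D}$, to establish the opposing terminal behaviors of $\gamma_3$ there by symmetry reduction to the already-understood families, and then to locate a single crossover on $\mathfrak{D}$ using the center-manifold stable-fiber criterion of Section~\ref{sec:seperationOfD_and_D3}. I parametrize points of $\mathfrak{D}$ by the coordinate $m_1$, which by Proposition~\ref{thm: bifurcationpoint} and the monotone structure of the arc runs from the saddle-node endpoint $v_{c_3}$ on the $m_2 = m_3$ edge to the pitchfork endpoint $v_{c_1} = v_{{\tiny \mbox{pf}}}$ on the $m_1 = m_2$ edge. The whole scheme is to show that the ``short $\gamma_3$ homoclinic on $\mathfrak{D}$'' detector built from the parameterized stable and unstable branches changes character exactly once along this interval.

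First I would settle the regime $m_2 \approx m_3$. On the $m_2 = m_3$ edge the vector field is reversible under the reflection $A$ of Section~\ref{sec:nonGeneric}, and this symmetry conjugates the continuation of $\gamma_3$ to one of the robust families analyzed in Section~\ref{sec:continuation_gamma2}, whose robustness constant is $\hat{s} = 1$. The normal form at $v_{c_3}$ produces the symmetric critical homoclinics of Figure~\ref{fig: homoclinic4234}, which I would identify as the limit of the $\gamma_3$ tube. Since transversality in the $L_0$ energy level is an open condition, the reduction and openness together give $\hat{s}_3 = 1$ for all parameters sufficiently near the $m_2 = m_3$ edge; that is, $\gamma_3$ continues all the way to $\mathfrak{D}$ whenever $m_2 \approx m_3$.

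Next I would settle the regime $m_1 \approx m_2$. On the $m_1 = m_2$ edge the bisecting-line reflection conjugates the continuation of $\gamma_3$ to the least robust family of Section~\ref{sec:continuation_gamma1}, which loses transversality in a type~IV bifurcation strictly before reaching the critical curve. Transporting this through the conjugacy, $\gamma_3$ inherits a type~IV fold of transverse homoclinics and therefore terminates before $\mathfrak{D}$ for all parameters with $m_1 \approx m_2$; in particular $\hat{s}_3 < 1$ there. Since the two endpoint regimes are genuinely different, continuity forces at least one transitional parameter on $\mathfrak{D}$ at which the $\gamma_3$ behavior switches.

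Finally I would pin down the single separating value. Following Section~\ref{sec:seperationOfD_and_D3}, for each $m_1$ on $\mathfrak{D}$ I integrate the unstable fiber of $\unstablebranch(t_u)$ forward until it meets the stable manifold of $L_c$ and record the signed arclength $d(m_1)$, measured along the stable fiber, of the entry point from the stable branch $\stablebranch$; the short $\gamma_3$-type critical homoclinic persists precisely while the entry stays off the stable direction, and the family splits from $\mathfrak{D}$ exactly when $d(m_1) = 0$. The two regimes above give $d$ of opposite sign at the ends of the arc, and $d$ is continuous in $m_1$ through the analytic dependence of the parameterized manifolds, so the intermediate value theorem yields a crossover. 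The hard part, and what I expect to be the main obstacle, is uniqueness: I would need to show that $d$ is strictly monotone in $m_1$ along $\mathfrak{D}$ — equivalently that the crossover is a nondegenerate co-dimension-two transition rather than a tangency crossed several times. I would attempt an analytic bound on $d'(m_1)$ using the variational equation along the connecting orbit, and, failing a clean estimate, fall back on a computer-assisted interval enclosure of $d$ and $d'$ over the relevant $m_1$-range, which would certify both existence and uniqueness of the separating parameter at once.
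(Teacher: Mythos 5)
You have reproduced the skeleton of the paper's supporting argument — symmetry reduction at the two edges of the simplex plus a single-crossover analysis along $\mathfrak{D}$ — but you have attached the two symmetries to the wrong families, and this inverts the substance of the claim. When $m_2 = m_3$, the reflection about the $x$-axis (the map underlying the reversor $A$ of Section \ref{sec:nonGeneric}) exchanges $\gamma_3$ with $\gamma_1$, not with $\gamma_2$: indeed, when $m_2 = m_3$ that reflection leaves the $\gamma_2$ family invariant, so it cannot conjugate $\gamma_3$ onto it. The paper's continuation and blue-sky results accordingly give $\hat{s}_{3,3} = \hat{s}_{1,3} \in (0.97,1)$, i.e.\ near the $m_2 = m_3$ edge the $\gamma_3$ family terminates in a type IV bifurcation strictly \emph{before} $\mathfrak{D}$. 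Conversely, when $m_1 = m_2$ the bisector reflection exchanges $\gamma_3$ with $\gamma_2$ (Section \ref{sec:continuation_gamma3}), whence $\hat{s}_{3,1} = \hat{s}_{2,1} = 1$ and $\gamma_3$ survives all the way to $\mathfrak{D}$, shrinking into the pitchfork. Section \ref{sec:seperationOfD_and_D3} confirms this quantitatively: parametrizing $\mathfrak{D}$ by $m_1$ (running from $\approx 0.4234$ at the $m_2=m_3$ end to $\approx 0.4402$ at the $m_1=m_2$ end), the short critical homoclinic around $L_3$ is found for $m_1 \in \{0.426,\dots,0.430\}$, on the pitchfork side, and is ruled out at $v_{c_2}$ ($m_1 = 0.4247$) near the $m_2=m_3$ side, with separation conjectured in $(0.425,0.426)$. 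Your identification of the symmetric homoclinics of Figure \ref{fig: homoclinic4234} at $v_{c_3}$ as the limit of the $\gamma_3$ tube therefore contradicts the paper's own evidence: those are ``large'' critical homoclinics, and the short $\gamma_3$-type orbit is precisely what the stable-fiber dichotomy excludes at that end. (To be fair, the conjecture as printed states the pairing the way you do; but that wording is inconsistent with Sections \ref{sec:continuation_gamma3} and \ref{sec:seperationOfD_and_D3} and with the later statement that $\mathfrak{D}_3$ coincides with $\mathfrak{D}$ near the $m_1 = m_2$ edge, and appears to be a slip in the paper — your symmetry derivation is incorrect under either reading.)

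A second, smaller gap concerns your crossover detector. You posit a signed fiber-distance $d(m_1)$ vanishing at the split and hope to prove strict monotonicity. The mechanism the paper actually observes is not a zero crossing: as $m_1$ decreases, the entry point of the critical homoclinic recedes along the stable fiber away from the stable branch $h_s$ (equivalently, it enters the fiber of $h_s(t)$ at fixed distance for decreasing $t$), and the conjectured splitting occurs when the orbit enters ``along the stable direction'' — a type V loss of transversality, i.e.\ escape of the intersection toward the strong-stable direction, not a sign change of a continuous scalar taking opposite signs at the two ends. The paper claims no monotonicity and infers a single separating value from this trend together with the type V normal-form picture; an intermediate-value argument of the kind you sketch would first require a detector that remains continuous through the degeneracy and is provably of opposite character at the two endpoints, which — especially after correcting the symmetry assignments above — your setup does not supply.
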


Indeed we propose a little more.  
Let $\mathfrak{D}'$ denote the curve in parameter space where the stability of 
$L_0$ changes from bi-focus to a saddle with real distinct eigenvalues.  
Let $\mathfrak{D}_1$ denote the set of points in $\mathfrak{S}$ where the 
$\gamma_1$ family loses transversality and breaks down in a type IV bifurcation.  
We claim that $\mathfrak{D}_1$ is an analytic simple closed curve from 
the $m_1 = m_2$ edge to the $m_2 = m_3$ edge which does not intersect
$\mathfrak{D}$.  Define $\mathfrak{D}_{2}$ and $\mathfrak{D}_3$ analogously.
We claim that $\mathfrak{D}_2 = \mathfrak{D}$ and that $\mathfrak{D}_3$
coincides with $\mathfrak{D}$ near the $m_1 = m_2$ edge, separates 
at a single parameter value where there is a type V bifurcation,
and then intersects with $\mathfrak{D}_1$ only at the $m_2 = m_3$ parameter edge. 
We conjecture that the separation of $\mathfrak{D}$ and $\mathfrak{D}_3$ occurs at 
$v_c = (m_1,m_2,m_3) \in \mathfrak{D}$ with $m_1 \in (0.425,0.426)$. We also claim 
that $\mathfrak{D}'$ lies entirely to the left of $\mathfrak{D}_1$, 
so that each of the $\gamma_{1,2,3}$ families undergoes a Belyakov-Devaney 
bifurcation on $\mathfrak{D}'$.  A graphical depiction of the 
conjectures is given in Figure \ref{fig:parameterSimplexFinal}.

\begin{figure}[!t]
\centering
\includegraphics[width=4.5in]{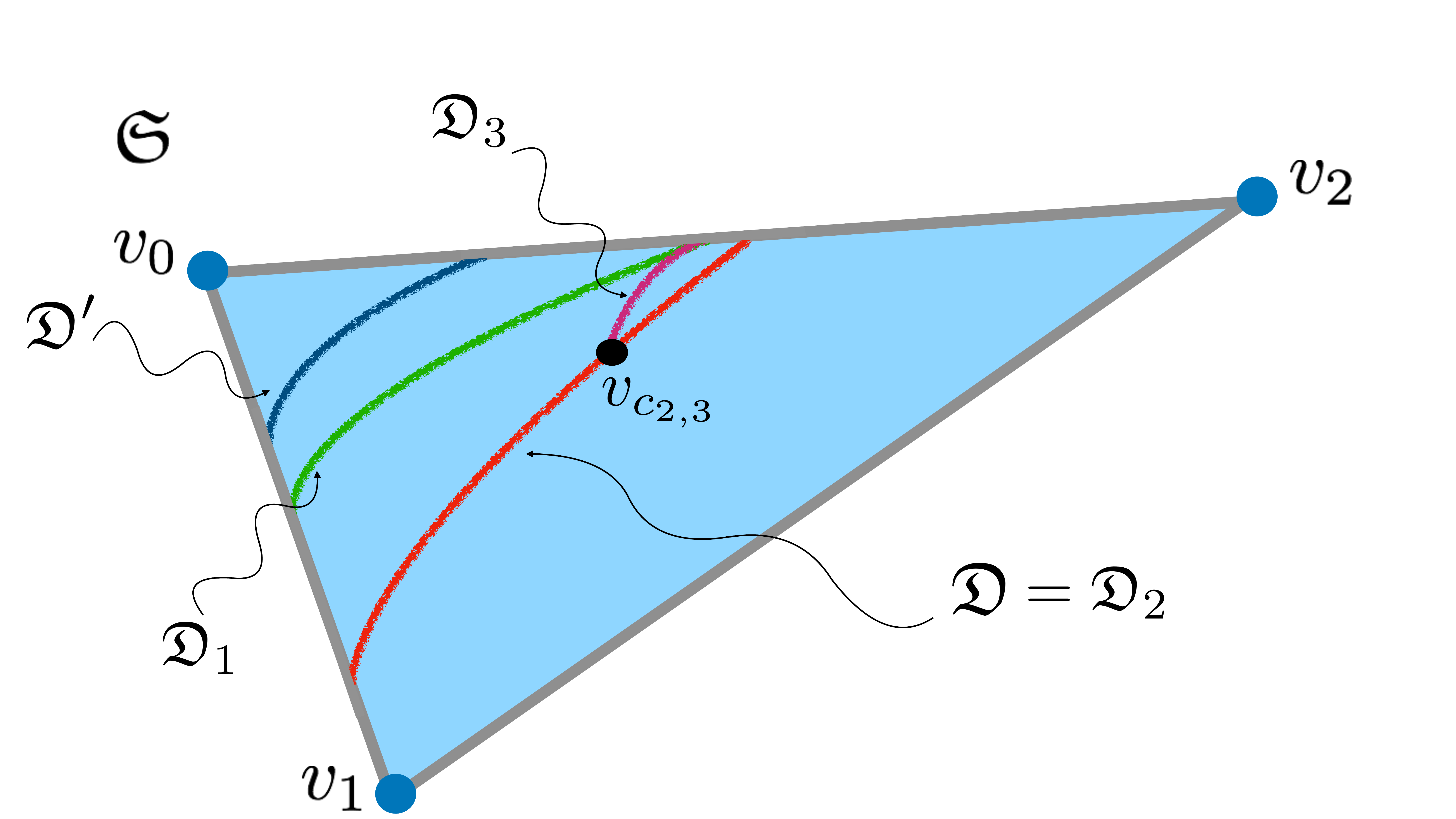}
\caption{\textbf{Conjectured critical curves for the $\gamma_{1,2,3}$ families:} 
conjectured critical lines for the $\gamma_{1,2,3}$ families of homoclinic orbits.
}\label{fig:parameterSimplexFinal}
\end{figure}

Resolving these conjectures would seem to require substantial additional work.  
Moreover, since the mathematically rigorous characterization of the equilibrium set 
given in \cite{MR2232439,MR2784870,MR3176322} required the use of computer assisted
methods of proof, it seems likely resolution of the conjectures would require similar methods.
We remark that one parameter families of connecting orbits have been studied using 
computer assisted methods of proof, see for example \cite{maximeBridge}.  
Computer assisted proofs of bifurcations of connecting orbits for maps were studied 
in \cite{MR3927299}, and there is reason to believe that these techniques 
could be extended to differential equations.  We further mention that 
computer assisted methods of proof have been devised for studying 
center manifolds \cite{capinski_roldan} in celestial mechanics problems.
Finally note that validated numerical methods for infinite dimensional multi-parameter
continuation problems have been developed \cite{MR3464215}, so that many of the techniques
needed for establishing the conjectures above exist.  Combining and extending
existing methods to resolve the proposed conjectures would represent a 
substantial leap in the state-of-the art of computer assisted methods of proof 
for global analysis of nonlinear systems.

There exists a substantial literature on
 numerical methods for studying degenerate connecting orbits.
We refer for example to  works of 
\cite{MR2020992,MR1456497,MR1205453,MR1283943,MR1404122,MR1900655,MR1314079,MR1830909}.
Since these works are based on implicit function theory/Newton's method applied to appropriate
systems of constraints, it is reasonable to suppose that such methods are amenable to the kind of 
a-posteriori analysis that underlies the papers discussed in the previous paragraph exploit.  In other words,
 the theoretical framework needed for framing computer assisted proofs of the critical and degenerate
 connecting orbits already exists.  
 
 Another concrete open question is suggested by the $\gamma_1$ family, which appears to
 always terminates in a type $IV$ global bifurcation.  If this is indeed the case then the 
 $\gamma_1$ family must collide with another homoclinic family and disappear
 at $\mathfrak{D}_1$, begging the question: \textit{what is the other homoclinic family 
 participating in this bifurcation?}  Or, if our guess is mistaken and $\gamma_1$ does not terminate 
 in a type $IV$ bifurcation, so that there is no second family of homoclinics terminating at $\mathfrak{D}_1$, 
 then what is happening there?   Put another way, if $\mathfrak{D}_1$ is not a type $IV$ 
 bifurcation curve then what is it? We have not yet begun to 
 explore this very natural question.

Before concluding, we remark that another interesting project would be to generalize the current 
study to the $\gamma_{4,5,6}$ families. We did not undertake this work for two reasons.
First our work on the $\gamma_{1,2,3}$ family already requires a substantial number of pages to 
describe.  But more importantly, the $\gamma_{4,5,6}$ families do not participate in blue 
sky catastrophes with any Lyapunov families of periodic orbits.  Rather, they are related to certain periodic orbits
around the primaries coming from the rotating Kepler problem. This complicates the use of the blue sky tests 
conducted in Section \ref{sec:blueSkies}.  Nevertheless, a follow-up study on the robustness
of the $\gamma_{4,5,6}$ families could be an interesting project.

\section{Acknowledgments}
The Authors would like to thank J.B. van den Berg and Bob Rink for many 
helpful discussions and much encouragement as this work progressed.   
The authors owe sincere thanks to two anonymous referees who carefully 
read the submitted version of the manuscript, and made a number of 
insightful comments and corrections which improved the final printed version 
of the paper. W. Hetbrij was partially supported by NWO-VICI grant 639033109. 
J.D. Mireles James was partially supported by NSF grant DMS-1813501.

\appendix

\section{Numerical continuation of periodic and connecting orbits in conservative systems}  \label{sec:appendixContinuation}
Before discussing continuation schemes for two point boundary value problems
we first introduce a little notation, and review some basic concepts from 
the elementary theory of differential equations.  A comprehensive reference for the 
material discussed in this section is the book of \cite{MR2224508}.

Suppose that for each $s \in [a, b] \subset \mathbb{R}$ the set $\Omega_s$ is an open subset 
of $\mathbb{R}^{d}$, and let $f_s \colon \Omega_s \times [a,b] \to \mathbb{R}^{d}$ be a 
one parameter family of smooth vector fields, depending smoothly on the parameter $s$. 
Suppose further that for each $s \in [a, b]$ there is a smooth function $E_s \colon \Omega_s \to \mathbb{R}$ having that 
\[
\nabla E_s(\mathbf{x}) \cdot f_s(\mathbf{x}) = 0,
\]  
for all $\mathbf{x} \in \Omega_s$.  It follows that the function $E_s$ is a conserved quantity for the 
vector field $f_s$, in the sense that if $\gamma \colon [0, T] \to \Omega_s$ is a solution of the differential equation 
\[
\gamma'(t) = f_s(\gamma(t)), 
\]
then 
\[
E_s(\gamma(t)) = E_s(\gamma(0)),
\]
for all $t \in [0, T]$.  To see this, simply differentiate to obtain 
\begin{align*}
\frac{d}{dt} E_s(\gamma(t)) & = \nabla E_s(\gamma(t)) \cdot \gamma'(t) \\
& = \nabla E_s(\gamma(t)) \cdot f(\gamma(t)) \\
& = 0, 
\end{align*}
as $\gamma(t) \in \Omega_s$ for $ t \in [0, T]$,  and $E_s$ is constant on solution curves
as desired.

Next we introduce the notion of the flow map generated by $f_s$.
Indeed, suppose that $x_0 \in \Omega_s$.  Since $f_s$ is differentiable (and hence locally Lipschitz) there 
exists a maximal $-\infty \leq \omega_{x_0} < 0 < \tau_{x_0} \leq \infty$ so that 
the solution of the initial value problem 
\[
x'(t) = f_s(x(t)), \quad \quad \quad x(0) = x_0
\]
exists and is unique for all $t \in [\omega_{x_0}, \tau_{x_0}]$.  We write 
\[
x(t) = \phi(x_0, t, s),  
\]
for $t \in [\omega_{x_0}, \tau_{x_0}]$ to denote the local flow map.
The map has the property that 
\[
\phi(x_0, t_1 + t_2) = \phi(\phi(x_0, t_1), t_2),
\]
for all $t_1, t_2 \in [\omega_{x_0}, \tau_{x_0}]$ which have
that $t_1 + t_2 \in [\omega_{x_0}, \tau_{x_0}]$.

It is a classic theorem of differential equations that $\phi$ is a 
smooth function of $s \in (a,b)$, and of $x_0 \in \Omega_s$ and of 
$t \in (\omega_{x_0}, \tau_{x_0})$, and we would like to 
compute the partial derivatives.
Since $x(t) = \phi(x_0, t, s)$ is a solution curve for the differential equation,
we have that 
\begin{equation} \label{eq:timePartialFlow}
\frac{\partial}{\partial t} \phi(x_0, t, s) = f(\phi(x_0, t, s)), \quad \quad \quad t \in (\omega_{x_0}, \tau_{x_0}).
\end{equation}
The derivatives with respect to $x_0$ and $s$ are more involved, but satisfy
the so called \textit{variational equations}.  
More precisely, let 
\[
M(t) = D_x \phi(x_0, t, s).
\]
Then $M(t)$ satisfies the non-autonomous, homogeneous, linear matrix differential equation 
\[
M'(t) = D_x f_s(\phi(x_0, t, s)) M(t), \quad \quad \quad M(0) = \mbox{Id},
\]
the equation of first variation with respect to initial conditions.  
Similarly, let 
\[
\eta(t) = \frac{\partial}{\partial s} \phi(x_0, t, s).
\]
Then $\eta(t)$ satisfies the non-autonomous, 
inhomogeneous, linear differential equation 
\[
\eta'(t) = D_x f_s(\phi(x_0, t, s)) \eta(t) + \frac{\partial}{\partial s} f_s(\phi(x_0, t, s)) 
\quad \quad \quad \eta(0) = 0,
\]
the equation of first variation with respect to the parameter $s$.
Of course if $f$ has other parameters, then partial derivatives of $\phi$ with 
respect to these parameters are found the same way.

Suppose now that $s \in [a,b]$ and $x_0 \in \Omega_s$ are fixed and that 
$0 < \tau \leq \tau_{x_0}$ is a fixed ``step size''.  In practice the quantities 
$\phi(x_0, \tau, s)$, $M(\tau)$, and $\eta(\tau)$
are computed simultaneously by numerically integrating the system of equations 
\begin{equation}\label{eq:varEqs}
\begin{array}{ll}
x'(t) & = f_s(x(t)) \\
M'(t) & = D_x f_s(x(t)) M(t) \\
\eta'(t) & = D_x f_s(x(t)) \eta(t) + \frac{\partial}{\partial s} f_s(x(t))
\end{array}
\end{equation}
with initial data $x(0) = x_0$, $M(0) = \mbox{Id}$, and $\eta(0) = 0$ up to time $\tau$. 
The computations carried out in the present work we 
exploit the standard ``off the shelf'' MatLab integration scheme 
known as \verb|rk45| to numerically solve these 
initial value problems when necessary.

\subsection{Periodic orbits of conservative systems} \label{sec:periodicOrbitsNum}
In this section we work at a fixed parameter value in $s \in [a,b]$ and hence 
suppress the dependance of $f$ and $\phi$ on $s$ all together.
So, let $f = f_s \colon \Omega \to \mathbb{R}^{d}$ denote our conservative vector field with 
$\Omega \subset \mathbb{R}^{d}$ open and
$E = E_s \colon \Omega \to \mathbb{R}$  the conserved quantity.  

It is well known (and much exploited in the present work) that periodic orbits of a conservative vector field 
appear in one parameter ``tubes'' parameterized by the conserved quantity, or equivalently -- by period. 
See for example the discussion of periodic orbits in the book of \cite{MR3642697}.
So, one locally isolates a particular periodic orbit in the ``tube'' by fixing a target period $T > 0$
as a problem parameter. Let $\tau_1, \ldots, \tau_K > 0$ 
have that $\tau_1 + \ldots + \tau_K  = 1$.  A periodic orbit
is equivalent to a solution $x_1, \ldots, x_K \in \Omega$ of the the system of 
equations
\begin{align*}
\phi(x_1, \tau_1 T) &= x_{2} \\
\phi(x_2, \tau_2 T) &= x_{3} \\
 & \vdots \\
\phi(x_K, \tau_K T) &= x_1.  
\end{align*}

Our strategy is to solve the system of equations using a Newton method, 
in which case it is important to consider only systems with isolated zeros.
Yet this system of equations cannot have a unique solution, as any phase shift of a 
periodic orbit is again a periodic orbit.   
To isolate, we introduce a Poincare phase condition.  That is, we require that the 
periodic orbit crosses a fixed co-dimension one affine subspace (or ``plane'')
$P \subset \mathbb{R}^{d}$ at time zero.  Let $y_0 \in \mathbb{R}^{d}$ be a point in the 
desired plane $P$ and $v \in \mathbb{R}^{d}$ be the direction vector of the plane.  Then 
$x_1 \in \mathbb{R}^{d} \in P$
if and only if 
\[
(x_1 - y_0)^T \cdot v = 0.
\]

When we append this equation to the system the resulting augmented system
has one more equation than unknowns.  The system needs to be balanced
by either eliminating an equation (which can be done using the conserved quantity)
or by introducing an ``unfolding parameter'' $\alpha \in \mathbb{R}$ -- the approach
taken in the present work.  An unfolding parameter is an artificial variable which balances 
the system,  and which should have the special property that it must be
zero at a periodic solution (this will be made precise below).

Deciding how to insert a new parameter into the problem is 
a delicate question, yet for conservative
vector fields there is a canonical choice.  
We describe the classic approach, as developed in
references \cite{MR1992054,MR2003792}, and 
define the unfolded family of vector fields by 
\[
f_\alpha(x) = f(x) + \alpha  \nabla E(x)^T,
\]
where $\alpha \in \mathbb{R}$ is the new unfolding parameter.
Suppose now that $\gamma \colon [0, T] \to \Omega$ is 
a periodic solution of 
\[
\gamma'(t) = f_\alpha(\gamma(t)).
\]
If $\nabla E(\gamma(t))$ not identically zero on $[0, T]$, then 
$\alpha = 0$, and - in fact -- $\gamma$ is a period $T$ 
solution of $\gamma' = f(\gamma)$.

To see this, consider the real valued function 
\[
g(t) = E(\gamma(t)).
\]
We have that 
\begin{align*}
g'(t) &= \nabla E(\gamma(t)) \gamma'(t) \\
& = \nabla E(\gamma(t)) f_\alpha(\gamma(t)) \\
& = \nabla E(\gamma(t)) f(\gamma(t)) + \alpha \nabla E(\gamma(t)) \nabla E(\gamma(t))^T \\
& = \alpha \| \nabla E(\gamma(t)) \|^2,
\end{align*}
as $f$ conserves $E$.
Since $\gamma$ has period $T$, we have that $\gamma(0) = \gamma(T)$ and hence 
\[
0  = E(\gamma(T)) - E(\gamma(0)) = g(T) - g(0).
\]
But 
\begin{align*}
 g(T) - g(0) & = \alpha \int_0^T \| \nabla E(\gamma(t)) \|^2 \, dt \\
 & = 0
\end{align*}
if and only if $\alpha = 0$, as $\|\nabla E(\gamma(t))\|$ is not identically zero.  

Now, denote by $\phi(x, t, \alpha)$  the local flow generated by $f_\alpha$, and note that the 
partial derivative of $\phi$ with respect to $\alpha$ is computed by solving the 
equation of first variation with respect to $\alpha$ as discussed in the 
previous section.  Define the map 
$F \colon  \Omega^K \times \mathbb{R} \subset \mathbb{R}^{d K + 1} \to \mathbb{R}^{d K + 1}$ by 
\begin{equation} \label{eq:perOrbMap}
F_T(x_1, x_2, x_3, \ldots, x_{K-1}, x_K, \alpha) = 
\left(
\begin{array}{c}
\phi(x_1, \tau_1 T, \alpha) - x_2 \\
\phi(x_2, \tau_2 T, \alpha) - x_3 \\
\vdots \\
\phi(x_{K-1}, \tau_{K-1} T, \alpha) - x_K \\
\phi(x_K, \tau_K T, \alpha)  - x_1 \\ 
(x_1 - y_0)^T \cdot v    
\end{array}
\right) 
\end{equation}
and note that a zero of $F_T$ is an orbit of period $T$ for $f$.
Let $\mathbf{x} = (x_1, \ldots, x_K, \alpha) \in  \mathbb{R}^{d K + 1}$.
The number of equations matches the number of unknowns, and the system is 
amenable to the Newton method
\[
\mathbf{x}_{j+1} = \mathbf{x}_j  + \Delta_j,
\]
where $\Delta_j$ is a solution of the linear system 
\[
DF_T(\mathbf{x}_j) \Delta_j = F_T(\mathbf{x}_j).
\]
The derivative involves only partial derivatives of the local flow, which 
are computed by solving variational equations \eqref{eq:varEqs}.  This is referred to as
a \textit{multiple shooting} scheme for the periodic orbit.

\begin{remark}
Some mechanical systems have the property that $\nabla E(x) = 0$ if and only if $f(x) = 0$.  
This happens for example when $f$ is a Hamiltonian vector field and $E$ is the Hamiltonian.
In this case $\nabla E(\gamma(t))$ is not identically zero if and only if $\gamma(t)$ is 
not constant, making this non-degeneracy hypothesis especially easy to check.
We also remark that for some mechanical systems it is possible to simplify the 
unfolded vector field $f_\alpha$.
For example if $f$ is a conservative system derived from Newton's laws, then 
one gets the same result by adding $\alpha$ times a linear dissipative term to the system 
rather than using the canonical gradient term.  
See \cite{MR2003792}.
\end{remark}

\subsection{Continuation with respect to period in conservative systems} \label{sec:continuationAppendix}
Note that the period $T > 0$ appears as a continuation parameter in the map $F_T$ defined in 
Equation \eqref{eq:perOrbMap}.  Then, 
supposing that $\mathbf{x}_0$ has $F_T(\mathbf{x}_0) = \mathbf{0}$ and that $DF_T(\mathbf{x}_0)$ is 
non-singular, we have by the implicit function theorem that 
there is a smooth branch of solutions of $\mathbf{x}(\beta)$
defined for $\beta \in (T-\epsilon, T+\epsilon)$, with $\mathbf{x}(T) = \mathbf{x}_0$.  Moreover, since 
\[
F_{\beta}(\mathbf{x}(\beta)) = 0,  \quad \quad \quad \mbox{for } \beta \in (T-\epsilon, T+\epsilon), 
\]
we have, after taking the derivative with respect to $\beta$, that  
\[
D F_{\beta}(\mathbf{x}(\beta)) \frac{d}{d \beta}\mathbf{x}(\beta) + \frac{\partial}{\partial \beta} F_{\beta}(\mathbf{x}(\beta)) = 0.
\]
Let $\mathbf{x}'(T) = \mathbf{v}$.  Then $\mathbf{v}$ solves the equation
\[
D F_{T}(\mathbf{x}_0)\mathbf{v} = - \frac{\partial}{\partial \beta} F_{T}(\mathbf{x}_0),
\]
giving the linear approximation of the branch through $\mathbf{x}_0$.
We note that the right hand side consists of terms of the form 
\[
\frac{\partial}{\partial \beta} \phi(x_j, \tau_j T, \alpha) = \tau_j f_\alpha(\phi(x_j, \tau_jT, \alpha),
\]
see Equation \eqref{eq:timePartialFlow}.  These expressions depend only on the unfolded vector field.  

Choosing a $\beta \neq T$,  we take 
\[
\mathbf{x}_1 = \mathbf{x}_0 + (\beta -T) \mathbf{v}, 
\]
as the first order approximation of $\mathbf{x}(\beta)$, corresponding to 
the periodic orbit with period $\beta \in (T-\epsilon, T+\epsilon)$
in a nearby energy level.  
Letting $\mathbf{x}_1$ serve as the initial guess for a zero of 
$F_{\beta}$, we run Newton's method and converge to a new solution -- 
for $|\beta-T|$ small enough.  

The process can be continued until the curve $\mathbf{x}(\beta)$ undergoes a bifurcation.
Such bifurcations are indicated by the singularity of the derivative $DF_T$, and are used 
to detect interesting phenomena in the main body of the paper.
We remark that the appearance of a saddle node bifurcation is not dynamically important 
in this context, as it indicates only that frequency does not vary monotonically within the
``energy tube''.  The tube itself is the object of interest.

We continue through such saddle node bifurcations using ``psudo-arclengh continuation,''
 a small modification of the algorithm just discussed.  More complete discussion of  
numerical algorithms based on psuco-arclength continuation 
is found in the book of \cite{MR2071006}.  See also \cite{MR3930602,matcont,MR1900655}.

\subsection{Homoclinic orbits in conservative systems}
Suppose that for $s_0 \in [a,b]$, $p_0 \in \Omega_{s_0}$ is a hyperbolic equilibrium solution 
of $f_{s_0}$.   
Suppose in addition that $d_u, d_s$ are respectively 
the dimension of the unstabe/stable eigenspaces and that
$d_u + d_s = d$.  
Since $p_0$ is a hyperbolic equilibrium solution,
there exists an $\delta > 0$ and 
$p_s \colon (s_0 - \delta, s_0 + \delta) \to \mathbb{R}^d$
so that $p(0) = p_0$ and 
\[
f_s(p(s)) = 0 \quad \quad \quad \mbox{for all } s \in (s_0 - \delta, s_0 + \delta).
\]

Moreover, we can choose $\delta > 0$ so that the dimension of the 
unstable/stable eigenspaces attached to $p(s)$ are $d_u$ and $d_s$ respectively 
for $s \in (s_0 - \delta, s_0 + \delta)$.
Since $f_s$ depends smoothly on $s \in [a,b]$, 
the parameterizations of the local unstable and stable manifolds 
depend smoothly on $s$ as well.  
Let $D_{u,s}$ be respectively the unit balls in
 $\mathbb{R}^{d_{u,s}}$ and suppose that 
$P_s \colon D_u \times  (s_0 - \delta, s_0 + \delta) \to \mathbb{R}^d$
and $Q_s \colon D_s \times  (s_0 - \delta, s_0 + \delta) \to \mathbb{R}^d$
are smooth parameterizations of the local unstable and stable 
manifolds attached to $p(s)$.    
 In particular, assume that for $s \in (s_0-\delta, s_0 + \delta)$,
 $w \in D_u \subset \mathbb{R}^{d_u}$, and 
 $z \in D_s \subset \mathbb{R}^{d_s}$ we have that 
 \[
 \lim_{t \to - \infty} \phi(P(w, s), t, s) = p(s)
 \quad \quad \quad \mbox{and} \quad \quad \quad 
 \lim_{t \to \infty} \phi(Q(z, s), t, s) = Q(s).
\]

Much like the case of a periodic orbit discussed above, we have that 
an infinitesimal shift of a homoclinic orbit segment is itself a homoclinic 
orbit segment.   Then a phase condition is required if we want to isolate a 
solution of the system.  We proceeded as follows, and define appropriate 
sections in the domains of $P$ and $Q$ respectively.  That is,
we take $w \colon [-1, 1]^{d_u-1} \to D_u$ and $z \colon [-1,1]^{d_s -1} \to D_s$
parameterizations of co-dimension one surfaces in $D_u$ and $D_s$.
We refer to the parameterized surfaces as ``phase surfaces'' in 
$D_u$ and $D_s$, and define 
\[
\hat P(\sigma, s) = P(w(\sigma), s)
\]
and 
\[
\hat Q(\theta, s) = Q(z(\theta), s).
\]


Let $s \in (s_0-\delta, s_0 + \delta)$,
$K \in \mathbb{N}$ and $\tau_1, \ldots, \tau_K > 0$ have $\tau_1 + \ldots + \tau_K = 1$.
We seek $x_1, \ldots, x_K \in \Omega_{s}$, $\sigma \in [-1,1]^{d_u-1}$, $\theta \in [-1, 1]^{d_s-1}$, 
and $T > 0$   so that 
\begin{align*}
\hat{P}(\sigma, s)  &=  x_1 \\
\phi(x_1, \tau_1 T, s) & = x_2 \\
\phi(x_2, \tau_2 T, s) & = x_3 \\
& \vdots \\
\phi(x_{K-1}, \tau_{K-1} T, s) & = x_K \\
\phi(x_K, \tau_K T, s) & = \hat{Q}(\theta, s).
\end{align*} 
A solution of this system of equations has that each of the
points $\hat{P}(\sigma, s)$, $x_1$, $\ldots$, $x_K$, and $\hat{Q}(\theta, s)$
lies on the same homoclinic orbit to $p(s)$.  That is, the homoclinic orbit 
is generated by flowing forward or backward any of these points.
It is worth noting that $T > 0$ is the ``time of flight'' from the unstable 
to the stable phase surface.  

Noting that $(\sigma, \theta) \in \mathbb{R}^{d_u + d_s - 2} = \mathbb{R}^{d-2}$
and counting variables, we see that $(\sigma, \theta, x_1, \ldots, x_K, T) \in \mathbb{R}^{dK+d-1}$
while the system of equations takes values in $\mathbb{R}^{dK+d}$.
Again -- and in direct analogy with the situation encountered when discussing periodic orbits -- 
we have one too few unknowns.  Just as in the periodic orbit case 
the system can be balanced by either adding an ``unfolding parameter'',
or by exploiting the conserved quantity to eliminate an equation.

Extending the ideas described  in the previous section, 
we once again employ the classic unfolding parameter
technique of \cite{MR1992054,MR2003792}, and
define 
\[
f_{s,\alpha} (x) = f_s(x) + \alpha \nabla E_s(x)^T.
\]
Let $\phi(x, t, s, \alpha)$ denote the associated flow.  
Arguing exactly as in the periodic case, we have that if $\gamma \colon [0, T] \to \mathbb{R}^d$
is an orbit with initial conditions $\gamma(0) = P(\sigma, s)$ and terminal conditions
$\gamma(T) = Q(\sigma, s)$, then $\alpha = 0$.  This relies on the fact that 
$P$ and $Q$ parameterize unstable/stable manifolds for the conservative vector 
field $f_s$, hence lie in the level set of the equilibrium solution.  That is
\[
E_s(P(\theta, s)) = E_s(Q(\sigma, s)) = E_s(P(s)),
\]
for all  $\sigma \in [-1,1]^{d_u-1}$, $\theta \in [-1, 1]^{d_s-1}$.

Then for fixed $s \in (s_0 - \delta, s_0 + \delta)$
a zero of the map $F_s \colon [-1,1]^{d_u-1} \times [-1, 1]^{d_s-1} 
\times \Omega_s^K \times \mathbb{R} \times \mathbb{R} \subset \mathbb{R}^{dK + d}
\to \mathbb{R}^{dK+d}$ defined by 
\begin{equation}\label{eq:homoclinicOpEq}
F_s(\sigma, \theta, x_1, \ldots, x_K, T, \alpha) = 
\left(
\begin{array}{c}
\hat{P}(\sigma, s)  - x_1 \\
\phi(x_1, \tau_1 T, s, \alpha) - x_2 \\
\phi(x_2, \tau_2 T, s, \alpha) - x_3 \\
 \vdots \\
\phi(x_{K-1}, \tau_{K-1} T, s, \alpha) - x_K \\
\phi(x_K, \tau_K T, s, \alpha) - \hat{Q}(\theta, s)
\end{array}
\right),
\end{equation}  
has $\alpha = 0$ and that the orbit of any of the points $x_1, \ldots, x_K$ is homoclinic
for $p(s)$.

Let $\mathbf{x} = (\sigma, \theta, x_1, \ldots, x_K, T, \alpha)$ denote the independent 
variable for $F_s$.  If $F_s(\mathbf{x}_0) \approx 0$ then we define the Newton sequence 
\[
\mathbf{x}_{n+1} = \mathbf{x}_n + \Delta_n,
\]
where $\Delta_n$ is the solution of the linear equation 
\[
DF_s(\mathbf{x}_n) \Delta_n = -F_s(\mathbf{x}_n).
\]
This approach is a multiple shooting scheme for homoclinic orbits in conservative systems. 
The iteration converges if $\mathbf{x}_0$ is close enough to a homoclinic orbit segment. 
Again, the derivative of $F_s$ involves derivatives of $P, Q$, and $\phi$
where the derivatives of $\phi$ with respect to $x$, $\alpha$ are computed 
by solving variational equations \eqref{eq:varEqs}. The derivative of $\phi$ with respect to $T$
is as given in Equation \eqref{eq:timePartialFlow}, and depends only on the unfolded vector field.

 \begin{remark}[Approximating the parameterizations $P$ and $Q$] \label{rem:parmMethInMultShoot}
 In practice it is very common to exploit the first order 
 approximations of $P$ and $Q$ by their eigenspaces, and indeed this is 
 the approach taken in many of the classic references 
\cite{MR1007358,MR1404122,MR1205453,MR2020992,MR1314079,MR1830909,MR2989589,MR1205453}. 
On the other hand, there exist well developed methods for numerically 
computing higher order approximations of $P$ and $Q$.  We exploit such methods in 
the present work as they (A) provide improved numerical stability/condition numbers 
in the boundary value problems defining the homoclinic orbits (just as higher order
numerical integration schemes provide improvements over the standard Euler method), and (B)
higher order methods are necessary for the center manifold calculations exploited  
in the critical calculations.  Using higher order methods throughout represents a 
more unified approach.  
For the high order approximation of $P$ and $Q$ in the present work we utilize 
numerical implementations, based on the parameterization method   
  \cite{MR1976079,MR1976080,MR2177465}, and developed in 
  \cite{MR3906230,shaneAndJay,MR3919451}.
 \end{remark}

 \subsection{Continuation of a conservative homoclinic with respect to a parameter}
 Suppose that $\mathbf{x}_0$ has $F_{s_0}(\mathbf{x}_0) = 0$, and that $DF_{s_0}(\mathbf{x}_0)$ is non-singular.
 Then, by the implicit function theorem there exists an $\epsilon > 0$ and a smooth branch of zeros 
 $\mathbf{x}(s)$ so that $F_{s}(\mathbf{x}(s)) = 0$ for $s \in (s_0 - \epsilon, s_0 + \epsilon)$, and $\mathbf{x}(s_0) = \mathbf{x}_0$.
 Differentiating with respect to $s$ leads to 
 \[
 D F_s(\mathbf{x}(s)) \frac{d}{d s} \mathbf{x}(s) + \frac{\partial}{\partial s} F_s(\mathbf{x}(s)) = 0.
 \]
 Letting $\mathbf{x}'(s_0) = \mathbf{v}$, we see that $\mathbf{v}$ solves the linear equation 
 \[
 DF_s(\mathbf{x}_0) \mathbf{v} = - \frac{\partial}{\partial s} F_s(\mathbf{x}_0).
 \]
Note that the right hand side consists of partial derivatives of the flow with respect to 
$\alpha$, and that these are found by solving variational equations.  

For $s \in (s_0-\epsilon, s_0+\epsilon)$ with $|s - s_0| \ll 1$, define 
\[
\mathbf{x}_1 = \mathbf{x}_0 + (s - s_0) \mathbf{v}, 
\]  
as the first order approximation of $\mathbf{x}(s)$ at $s_0$.  Then $\mathbf{x}_1$
is the first order approximation of a homoclinic orbit segment to $p(s)$.
We take $\mathbf{x}_1$ as our initial guess for a zero of $F_s(\mathbf{x})$ and 
once again iterate the Newton scheme.  
Again, this process can be iterated until $\mathbf{x}(s)$ undergoes a bifurcation.  

One technical difficulty is that this procedure requires the computations of  
derivatives of $P$ and $Q$ with respect to the parameter $s$.  If $P$ and $Q$ are found
using the parameterization method as discussed in Remark \ref{rem:parmMethInMultShoot}, 
then the partial derivatives with respect to parameter can be computed using the variational 
equations developed in \cite{JDMJ01}.  On the other hand, it is also possible to compute 
the partial derivatives using finite differencing schemes.  For exampe if we know $P(\sigma, s_0)$
and $P(\sigma, s_1)$ then we have
\[
\frac{\partial}{\partial s} P(\sigma, s_0) \approx \frac{P(\sigma, s_1) - P(\sigma, s_0)}{s_1 - s_0}.
\]
Since the parameterizations at the new parameter $s_1$ have to be computed in order to define 
the map $F_{s_1}$, this differencing does not incur any additional cost.  In the present work we find
that this differencing scheme is sufficient for our purposes.

\section{Formal series calculation of the center stable/unstable manifolds} \label{sec:centerCalcAppendix}
The calculations discussed in Section \ref{sec:blueSkies} provide supporting evidence
for the claims made in Section \ref{sec:numCont} in cases when the homoclinic 
orbits do not shrink in size to zero.  In these cases, failure of the numerical continuation 
algorithms is taken as an indication that the homoclinic orbits undergo a bifurcation.
On the other hand, when the connecting orbit shrinks to zero size the continuation 
algorithm eventually fails for other reasons: essentially it becomes increasingly difficult 
to distinguish the homoclinic orbit form the equilibrium solution itself.
This is the case for the $\gamma_{2, k}(s)$, $k = 1,2,3$ and the $\gamma_{3,1}(s)$
homoclinic families, which we conjectured survive all the way to the critical curve $\mathfrak{D}$.

Fortunately, small amplitude homoclinic orbits can be studied by completely different -- and 
much more local -- center manifold methods.  We now turn to a method for 
computing the center manifold of a critical libration point when the system has parameter 
values on (or near) the critical curve $\mathfrak{D}$.
We employ a novel parameterization method recently 
developed by van den Berg, Rink, and the first author \cite{wouterCenterManifoldsII}.  

The parameterization method is a functional analytic framework for studying invariant 
manifolds developed by Cabr\'{e}, Fontich, and de la Llave \cite{MR1976079,MR1976080,MR2177465}.
We refer the interested reader also to the comprehensive recent book on the 
subject by Haro, Figueras, Mondelo, Luque, and Canadell \cite{mamotreto}.
The parameterization method was used extensively in the work of 
\cite{MR3906230,MR3919451}, and also in the recent work of
Murray and the second author on homoclinic dynamics for planar and spatial 
periodic orbits in the CRTBP
\cite{chebManifolds,maximeMurray_connectionsVerticalLyap}.

Of course, computational methods for center manifolds, and related methods
for numerical calculation of normal forms, have a long 
history in the Celestial Mechanics literature. Other 
method than those we choose could have been used here.  
The literature is substantial and a thorough review of the literature is beyond
the scope of the present work.  
We refer the interested reader to the works of  
\cite{MR1054714,MR1636119,MR1705705,MR1700577,MR1720891,MR1630282,MR2268196,MR2670181}.
We also refer to the books \cite{MR1867240,MR1881823,MR1878993,MR1875754} 
for extensive computational treatment of invariant manifolds and their use in 
space mission design.
Much more detail on center manifolds for parabolic equilibria with applications to
Celestial Mechanics is found in \cite{MR3642259,MR2276478,MR2030148}.
This interested reader may want to consult the lecture notes 
 \cite{1990mmcm.conf..285S}.

Turning to the parameterization method, let us establish some notation.
We focus here on the case that the phase space is four dimensional, 
as this is the only case encountered in the present work, however most of the 
results reported here generalize as discussed in the references above.
Let $\Omega \subset \mathbb{R}^4$ be an open subset and suppose that 
$f \colon \Omega \to \mathbb{R}^4$ is a smooth and conservative vector field. 
Consider a connected open set $U \subset \mathbb{R}^d$ with $d = 1,2,3$, 
a one-to-one map $K \colon U \to \mathbb{R}^4$, and a vector field 
$r \colon \mathbb{R}^d \to \mathbb{R}^d$.  If $K$ satisfies the infinitesimal 
invariance equation 
\begin{equation} \label{eq:invEq}
DK(\mathbf{y}) r(\mathbf{y}) = f(K(\mathbf{y})),  \quad \quad \quad \mathbf{y} \in U,
\end{equation}
then the image of $K$ is locally invariant under the flow generated by $f$.
In fact, $K$ lifts orbits of $\mathbf{y}' = r(\mathbf{y})$ to orbits of $f$.
If the vector field $f$ is inflowing/outflowing on the boundary of $K(U)$, then the image 
of $K$ is forward/backward invariant.
The main idea of the parameterization method is to study appropriate versions of 
Equation \eqref{eq:invEq} in various important situations of interest.  
The geometric meaning of Equation \eqref{eq:invEq} is illustrated in Figure \ref{fig:parmSchematic}.

\begin{figure}[!t]
\centering
\includegraphics[scale = 0.25]{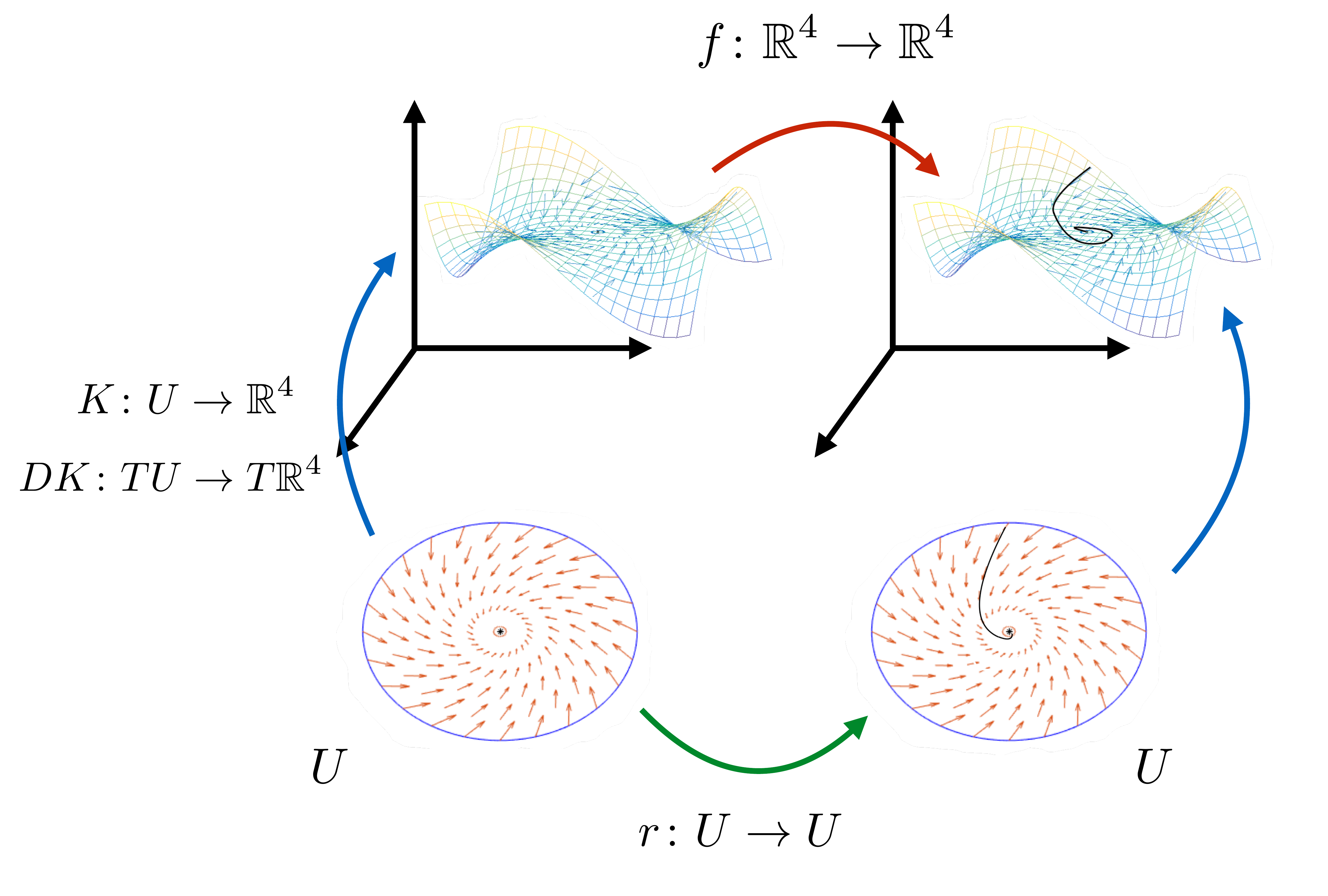}
\caption{\textbf{The geometric meaning of Equation \eqref{eq:invEq}:} 
We are interested in a chart map $K$ parameterizing an invariant manifold patch.
The parameterization method singles out special charts satisfying 
an infinitesimal conjugacy. The  idea is that $K$ functions in 
two complementary ways.  On one hand, $K$ embeds $U$ into the phase space, hence is 
a chart for a manifold patch.  On the other 
hand, $DK$ maps a vector field $r$ defined in $U$ into the tangent 
space of this manifold.  If the two vector fields 
--$f$ restricted to the image of $K$, and the push forward of $r$ by $DK$ 
are equal (as required by 
Equation \eqref{eq:invEq}) -- then $K$ maps orbits of $r$ to 
orbits of $f$ and the manifold patch is locally invariant.}
\label{fig:parmSchematic}
\end{figure}

We now describe a version of the parameterization method 
for parabolic equilibrium solutions. That is, we consider situations where 
$Df(\mathbf{x}_0)$ has one or more zero eigenvalues.
In the present paper we are primarily interested in the fold 
bifurcation, though the pitchfork is considered briefly as well.

The technique used here is developed in \cite{wouterCenterManifoldsII}, and has not 
been applied in the context of Celestial Mechanics until now.
Because of this novelty we discuss the method in somewhat more detail than in the previous two 
sections, where we reviewed material appearing already in the literature.  
Returning to Equation \eqref{eq:invEq}, the idea is that we must now solve simultaneously for the 
embedding $K$ of the center manifold and the model/inner dynamics $r$.

As we have seen in Section \ref{sec:normalform}, at the Saddle node bifurcation there exists a solution branch which is non-linear stable, as well as a solution branch which is non-linear unstable. After we compute the embedding $K$, we exploit the Jacobi integral $E$ to find those branches.
If $\branch(t)$ is a characterization of the stable solution branch for the conjugate vector field $r$ on the center subspace, we have that $\conjugacy(\branch(t))$ is a solution branch of the original differential equation $\dot{\boldx}(t) = \dynamics(\boldx(t))$ by the construction of $\conjugacy$. Thus the energy $E$ is 
constant along $\conjugacy( \branch (t))$. Furthermore, 
we have $\lim_{t \to  \infty} \conjugacy( \branch (t)) = \boldx_0$, hence
\begin{align*}
E_0 \bydef E(\boldx_0) = E(\lim_{t \to  \infty} \conjugacy( \branch (t))) = \lim_{t \to   \infty }E( \conjugacy( \branch (t))) = E( \conjugacy( \branch (t)))  && \text{ for all $t \in \mathbb{R}$}.
\end{align*}
If $\branch(t)$ is a characterization of the unstable branch, we consider the limit of $t$ going to $ -\infty$ to again find that
\begin{align*}
E_0 \bydef E(\boldx_0) = E(\lim_{t \to - \infty} \conjugacy( \branch (t))) = \lim_{t \to -  \infty }E( \conjugacy( \branch (t))) = E( \conjugacy( \branch (t)))  && \text{ for all $t \in \mathbb{R}$}.
\end{align*}
Hence the (un)stable branch is found by solving the energy equation
\begin{equation}\label{eq: stablebranch}
E \circ \conjugacy \circ \branch - E_0 = 0, 
\end{equation}
using a power matching scheme.

\subsection{Center Stable/Unstable Manifolds} \label{sec:centerStableUnstable}

After finding the stable and unstable branches in the center manifold, we want to construct the center 
(un)stable manifolds. For convenience, we consider only the center stable manifold 
in the following sections.  The center unstable manifold is similar. We consider connected open sets $\centersubspace \subset \mathbb{R}^2$  and $\centerstablesubspace \subset \mathbb{R}^3$ for the role of $U$ in the invariance equation \eqref{eq:invEq} for the center manifold and center stable manifold, respectively. So, for the center stable manifold we seek a solution of the conjugacy equation
\begin{align*}
 D \centerstableconj(\boldy,z) 
\centerdynamics'(x,y,z) =\dynamics \circ \centerstableconj(x,y,z) && \text{ for all $(x,y,z) \in \centerstablesubspace$.}
\end{align*} 
We know again from \cite{wouterCenterManifoldsII} that such a conjugacy and conjugate dynamical system exists. 
In this case, the conjugacy $\centerstableconj$ is between the center stable subspace and the center 
stable manifold, and the vector field $\centerdynamics'$ is the conjugate vector field on the center stable 
manifold. The center subspace and manifold are naturally embedded in the center stable subspace and 
manifold respectively. Hence, we require that $\centerstableconj(\boldy,0) = \conjugacy(\boldy)$ for all 
$\boldy =(x,y) \in \centersubspace$.  It is too much to ask that the conjugate dynamics on 
the center stable subspace are uncoupled, i.e. 
$\centerdynamics'(\boldy,z) = ( \centerdynamics(\boldy), \stabledynamics(z))$ cannot hold to all orders.
Instead, as we will see in Lemma \ref{thm: centerstableconjugacyformalseries}, we impose that the conjugate vector field 
 in the center direction is uncoupled and the vector field on the stable fibers is given by $\stabledynamics(x)z$, 
 where $x$ is the first coordinate on the center subspace, up to arbitrary order.  Consider then the  
 conjugacy equation
\begin{align}
 D \centerstableconj(\boldy,z) ( \centerdynamics(\boldy) , \stabledynamics(x) z) = \dynamics \circ \centerstableconj(\boldy,z)&& \text{ for all $(\boldy,z) \in \centerstablesubspace$ and $\boldy = (x,y)$,} \label{eq: centerstableconjugacy}
\end{align} 
subject to the constraint $\centerstableconj(\boldy, 0 ) = \conjugacy(\boldy)$.

\subsection{Formal Series Calculations} \label{sec:formalSeries}

Since center manifolds are in general not analytic, we search for solutions of 
in the space of $C^n$.  Nevertheless, we use formal power series 
for computational convenience.  For notational convenience, we apply a translation and coordinate transformation to move $L_c$ to the origin and have $D\dynamics(L_c)$ in Jordan normal form. To find the solution branches $\branch(t)$ we will need to know the explicit coordinate transformation. In Proposition \ref{thm: bifurcationpoint} we found the eigenvectors $\bv_0$ and $\bv_1$ of the center subspace. The stable and unstable eigenvectors are given by
\begin{align*}
\bv_{\pm} \bydef \left( \Omega_{xy} + 2 \lambda_{\pm} , 
\lambda_{\pm} \left( \Omega_{xy} + 2 \lambda_{\pm} \right),  
\Omega_{yy} - 4 , \lambda_{\pm} 
\left(  \Omega_{yy} - 4 \right) \right),
\end{align*}
where $\lambda_{\pm} \bydef \mp\sqrt{-4 + \Omega_{xx} + \Omega_{yy}}$ are the stable and unstable eigenvalues. We define the coordinate transformation $\mathcal{C} \bydef (\bv_0, \bv_1, \bv_+ , \bv_-)$, and redefine $\dynamics(\boldx) \bydef \mathcal{C}^{-1} f(\mathcal{C} \boldx + \boldx_0)$.

\begin{lemma}\label{thm: centerconjugacyformalseries} 
For every formal series $\centerconj: \mathbb{R}^2 \to \mathbb{R}^2$ given by
\begin{align}
\centerconj\begin{pmatrix} x \\ y \end{pmatrix} \bydef \begin{pmatrix} x \\ y \end{pmatrix} 
+ \sum_{n=2}^\infty  \sum_{\substack{(i,j) \in \mathbb{N}^2 \\ i + j = n}} \begin{pmatrix} 
a_{i,j} x^i y^j \\ b_{i,j} x^i y^i \end{pmatrix},
\end{align}
there exist formal series $\hyperbolicconj: \mathbb{R}^2 \to \mathbb{R}^2$ and 
$\centerdynamics : \mathbb{R}^2 \to \mathbb{R}^2$ given by
\begin{align}
\hyperbolicconj \begin{pmatrix} x \\ y \end{pmatrix} &\bydef  \sum_{n=2}^\infty 
\sum_{\substack{(i,j) \in \mathbb{N}^2 \\ i + j = n}} \begin{pmatrix} \alpha_{i,j} x^i y^j \\ 
\beta_{i,j} x^i y^i \end{pmatrix}, \\
\centerdynamics \begin{pmatrix} x \\ 
y \end{pmatrix} &\bydef \begin{pmatrix} y \\ 
0 \end{pmatrix} + \sum_{n=2}^\infty \sum_{\substack{(i,j) \in 
\mathbb{N}^2 \\ i + j = n}} \begin{pmatrix} \gamma_{i,j} x^i y^j \\ \varepsilon_{i,j} x^i y^i \end{pmatrix},
\end{align}
such that $\conjugacy \bydef ( \centerconj, \hyperbolicconj)$ and $\centerdynamics$
  solve Equation \eqref{eq:invEq}. Furthermore, instead of fixing the constants $(b_{i,j})_{(i,j) \in \mathbb{N}^2}$ and solving for the constants $(\gamma_{i,j})_{(i,j) \in \mathbb{N}^2}$ among others, we could fix the constants $(\gamma_{i,j})_{(i,j) \in \mathbb{N}^2}$ and solve for $(b_{i,j})_{(i,j) \in \mathbb{N}^2}$.
\end{lemma}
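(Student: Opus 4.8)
The plan is to substitute the power-series ansatz into the invariance equation \eqref{eq:invEq} and solve it recursively, one homogeneous degree at a time. First I would split $\conjugacy=(\centerconj,\hyperbolicconj)\colon\rr^2\to\rr^4$ and the transformed field $\dynamics=(\dynamics_c,\dynamics_h)$ into their center (first two) and hyperbolic (last two) components. Because the coordinate change $\mathcal{C}=(\bv_0,\bv_1,\bv_+,\bv_-)$ coming from Proposition \ref{thm: bifurcationpoint} puts $D\dynamics(\bzero)$ into Jordan normal form, the linearization is block diagonal, with nilpotent center block $N=\left(\begin{smallmatrix}0&1\\0&0\end{smallmatrix}\right)$ and diagonal hyperbolic block $\Lambda=\operatorname{diag}(\lambda_+,\lambda_-)$. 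With the prescribed leading terms ($\centerconj=\mathrm{Id}+\cdots$, $\hyperbolicconj=\bzero+\cdots$, $\centerdynamics=(y,0)+\cdots$) the degree-one part of \eqref{eq:invEq} holds automatically, which starts the induction.

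For the inductive step I would collect the homogeneous degree-$n$ terms of \eqref{eq:invEq}. In the center component the only order-$n$ unknowns that appear are the coefficients of $\centerconj_n$ and $\centerdynamics_n$, and they enter linearly through the cohomological operator $\mathcal{L}_n\centerconj_n=N\centerconj_n-D\centerconj_n\,(y,0)^{T}$; everything else is a known polynomial assembled from strictly lower-order data. Writing $\centerconj_n=\sum_{i+j=n}(a_{i,j},b_{i,j})\,x^iy^j$ and matching the coefficient of $x^py^q$ yields
\begin{align*}
\gamma_{p,q} &= b_{p,q}-(p+1)\,a_{p+1,q-1}+P_{p,q}, \\
\varepsilon_{p,q} &= -(p+1)\,b_{p+1,q-1}+Q_{p,q},
\end{align*}
where $P_{p,q},Q_{p,q}$ are known and any coefficient with a negative index is read as zero. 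The second relation always determines $\varepsilon_{p,q}$ once the $b$'s are fixed, so $\varepsilon$ is never a free choice. In the first relation the coefficients of $\gamma_{p,q}$ and of $b_{p,q}$ are both $1$: hence I may either prescribe all $b_{i,j}$ (for instance the graph style $\centerconj=\mathrm{Id}$) and read off $\gamma_{p,q}$, or prescribe all $\gamma_{i,j}$ (for instance the Takens normal form $\gamma\equiv0$, with $a\equiv0$) and solve for $b_{p,q}$. This is exactly the stated dichotomy; the coefficients $a_{0,n}$ (which appear nowhere) together with the family selected by the chosen style are assigned freely, typically to zero.

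The hyperbolic component is handled separately and carries no freedom: at degree $n$ it reads $D\hyperbolicconj_n\,(y,0)^{T}-\Lambda\hyperbolicconj_n=(\text{known})$, and the operator $h\mapsto y\,\partial_x h$ is nilpotent on homogeneous degree-$n$ polynomials, so $\mathcal{H}_n=(\text{nilpotent})-\Lambda$ is invertible precisely because $\lambda_\pm\neq0$. Thus $\hyperbolicconj_n$ is uniquely determined, closing the induction. The existence of a genuine finitely differentiable center manifold carrying these formal data is guaranteed by \cite{wouterCenterManifoldsII}; here I only need the formal solvability just described.

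I expect the main obstacle to be the bookkeeping forced by the nilpotent center block. Unlike the hyperbolic directions, where a nonzero eigenvalue makes the cohomological operator invertible, the double-zero eigenvalue makes $\mathcal{L}_n$ non-invertible, with nontrivial kernel and cokernel; the care lies in verifying that the order-$n$ unknowns truly enter only through $\mathcal{L}_n\centerconj_n$ and $\centerdynamics_n$, and in checking that both normalization styles remain admissible for every monomial, including the boundary cases $q=0$ and the kernel monomial $x^0y^n$. Once the explicit relations above are in hand the rest is routine, but this is the step where sign and index slips are easiest to make.
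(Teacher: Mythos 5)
Your proposal is correct and follows essentially the same route as the paper's proof: induction on the homogeneous degree, with the order-$n$ unknowns entering linearly so that the center components yield exactly the relations $\gamma_{p,q}=b_{p,q}-(p+1)a_{p+1,q-1}+\text{(known)}$ and $\varepsilon_{p,q}=-(p+1)b_{p+1,q-1}+\text{(known)}$, giving the stated freedom to exchange the roles of the $b_{i,j}$ and $\gamma_{i,j}$. The only cosmetic difference is that you solve the hyperbolic block by observing that $y\,\partial_x-\Lambda$ is invertible (nilpotent plus invertible diagonal), whereas the paper runs the equivalent explicit triangular recursion on the $\alpha_{i,j},\beta_{i,j}$ starting from $(i,j)=(n,0)$.
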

\begin{proof}
We define the homogeneous polynomials
\begin{align*}
P_{\centerconj}^n \begin{pmatrix} x \\ y \end{pmatrix} &\bydef  
\sum_{\substack{(i,j) \in \mathbb{N}^2 \\ i + j = n}} \begin{pmatrix} a_{i,j} x^i y^j \\ 
b_{i,j} x^i y^i \end{pmatrix}, \\
P_{\hyperbolicconj}^n \begin{pmatrix} x \\ y \end{pmatrix} &\bydef  
\sum_{\substack{(i,j) \in \mathbb{N}^2 \\ i + j = n}} 
\begin{pmatrix} \alpha_{i,j} x^i y^j \\ \beta_{i,j} x^i y^i \end{pmatrix}, \\
P_{\centerdynamics}^n \begin{pmatrix} x \\ y \end{pmatrix} 
&\bydef \sum_{\substack{(i,j) \in \mathbb{N}^2 \\ i + j = n}} 
\begin{pmatrix} \gamma_{i,j} x^i y^j \\ \varepsilon_{i,j} x^i y^i \end{pmatrix},
\end{align*}
and we will prove that we can recursively define $P_{\hyperbolicconj}^n$ and 
$P_{\centerdynamics}^n$ in terms of $P_{\centerconj}^m$, $P_{\hyperbolicconj}^m$, 
and $P_{\centerdynamics}^m$  for $m < n$. It is clear that $\hyperbolicconj$ and $\centerdynamics$ satisfy Equation \eqref{eq:invEq} up to first order.
Now, assume that $P_{\hyperbolicconj}^m$ and $P_{\centerdynamics}^m$ for $m < n$ are 
such that $\conjugacy$ and $\centerdynamics$ satisfy Equation \eqref{eq:invEq} up to order 
$n -1$. Then the $n$-th order terms of the difference between the right and left hand side of Equation \eqref{eq:invEq} are given by
\begin{align}
			 &\sum_{\substack{(i,j) \in \mathbb{N}^2 \\ i + j =n}} \begin{pmatrix}  
			 b_{i,j} x^iy^j - \gamma_{i,j} x^i y^j  - i a_{i,j} x^{i-1} y^{j+1} \\
			-\varepsilon_{i,j} x^i y^j  - i b_{i,j} x^{i-1} y^{j+1}\\
			\lambda_+ \alpha_{i,j} x^i y^j  - i \alpha_{i,j} x^{i-1} y^{j+1}\\
			\lambda_- \beta_{i,j} x^i y^j  - i \beta_{i,j} x^{i-1} y^{j+1}
			\end{pmatrix} - \mathcal{P}^n \begin{pmatrix} x \\ y \end{pmatrix} \nonumber \\
			=& \begin{pmatrix*}[l]  \left( b_{n,0}  - \gamma_{n,0}  \right) x^n \\
			 -\varepsilon_{n,0} x^n\\
			 \lambda_+ \alpha_{n,0}  x^n\\
			 \lambda_- \beta_{n,0}    x^n\end{pmatrix*} + 
			\sum_{\substack{(i,j) \in \mathbb{N}^2 \\ i + j =n \\ 
			(i,j) \neq (n,0)}} \begin{pmatrix*}[l]  
			\left( b_{i,j}  - \gamma_{i,j}   - (i+1) a_{i+1,j-1} \right) x^{i} y^{j} \\
			\left( -\varepsilon_{i,j}   - (i+1) b_{i+1,j-1} \right) x^{i} y^{j}\\
			\left( \lambda_+ \alpha_{i,j}   - (i+1) \alpha_{i+1,j-1} \right) x^{i} y^{j}\\
			\left( \lambda_- \beta_{i,j}   - (i+1) 
			\beta_{i+1,j-1} \right) x^{i} y^{j}
			\end{pmatrix*} - \mathcal{P}^n \begin{pmatrix} x \\ y 
			\end{pmatrix}.
			\label{eq: nthorderconjugacy}
\end{align}
Here $\mathcal{P}^n$ consists of the $n$-th order terms of $\dynamics(\conjugacy^{<n}(\boldy)) - D\conjugacy^{<n}(\boldy) \centerdynamics^{<n}(\boldy)$ where $\conjugacy^{<n} = \sum_{m=1}^{n-1} \left( P_{\centerconj}^m , P_{\hyperbolicconj}^m \right)$ and $\centerdynamics^{<n} = \sum_{m=1}^{n-1} P_{\centerdynamics}^m$. For part of the computational implementation of obtaining $\mathcal{P}^n$ we refer to Appendix \ref{sec:numericalCenterManifold}.
In particular, $\mathcal{P}^n$ is an expression in terms of the constants $a_{i,j}$, $b_{i,j}$, $\alpha_{i,j}$, $\beta_{i,j}$, $\gamma_{i,j}$ and $\varepsilon_{i,j}$ for $i+j <n$. Hence we find unique $\alpha_{i,j}$, $\beta_{i,j}$, $\gamma_{i,j}$ and $\varepsilon_{i,j}$ such that 
\eqref{eq: nthorderconjugacy} vanishes
 starting from $(i,j) = (n,0)$. In particular, we see that if $\gamma_{i,j}$ is fixed instead of $b_{i,j}$, 
 we can make 
 \eqref{eq: nthorderconjugacy} vanish for $\alpha_{i,j}$, $\beta_{i,j}$, $b_{i,j}$ and $\varepsilon_{i,j}$
  instead.
\end{proof}

Now that we have a solution to Equation \eqref{eq:invEq}, we want to find the stable branch in the center 
subspace. Recall that we have applied the coordinate transformation
\begin{align*}
\mathcal{C} = \left( \begin{array}{cccc}
\Omega_{yy} &  \Omega_{xy}  & \multirow{4}{*}{$\bv_+$} &  \multirow{4}{*}{$\bv_-$}\\
0 & \Omega_{yy} \\
- \Omega_{xy} & 2 - \Omega_{xx} \\
0 & - \Omega_{xy}
\end{array}\right),
\end{align*}
and a translation by $\boldx_0$ on $\dynamics$. Hence the conserved quantity becomes $\energy = E (\mathcal{C} \boldx + \boldx_0)$, with the Jacobi integral $E(x, \dot x, y, \dot y) = 
-\left( {\dot x}^2 + {\dot y}^2 \right) + 2\Omega(x,y)$.

\begin{lemma}\label{thm: centerstablebranchformalseries}
There exists a formal series $\branch : \mathbb{R} \to \mathbb{R}^2$ given by 
\begin{align}
\branch(t) \bydef \begin{pmatrix} 0 \\ t^3 \end{pmatrix} + \sum_{n=2}^\infty \begin{pmatrix} 
  c_n t^n \\  0 \end{pmatrix}
\end{align} 
which solves Equation \eqref{eq: stablebranch} if the constant 
\begin{align*}
\mathscr{E} \bydef \Omega_{xxx} \Omega_{yy}^3 - 3 \Omega_{xxy} 
\Omega_{yy}^2 \Omega_{xy} + 3 \Omega_{xyy} 
\Omega_{yy} \Omega_{xy}^2 - \Omega_{yyy} \Omega_{xy}^3
\end{align*}
is non-zero.
\end{lemma}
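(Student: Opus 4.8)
The plan is to insert the ansatz directly into the energy equation \eqref{eq: stablebranch} and solve for the unknown coefficients $c_n$ by matching powers of $t$. Writing $\branch(t) = (x(t), y(t))$ with $y(t) = t^3$ and $x(t) = \sum_{n \ge 2} c_n t^n$, and abbreviating $g(x,y) \bydef \energy(\conjugacy(x,y))$, the task becomes to show that the scalar formal series $g(\branch(t)) - E_0$ can be made to vanish identically in $t$, and that the leading coefficient $c_2$ is determined precisely when $\mathscr{E} \neq 0$. The form of the ansatz is natural: on the center manifold the dynamics is of fold type, $\dot{x} = y + \cdots$, so the level set $\{g = E_0\}$ through the degenerate equilibrium is a cusp $y^2 \sim x^3$, which is regularized by the choice $y = t^3$, $x = c_2 t^2 + \cdots$.

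First I would compute the low-order Taylor expansion of $g$ at the origin. Since $L_c$ is a libration point, $\nabla E(\boldx_0) = 0$, so $g(0) = E_0$ and $\nabla g(0) = 0$. For the Hessian I would use $\conjugacy = (\centerconj, \hyperbolicconj)$ with $\centerconj(x,y) = (x,y) + O(2)$ and $\hyperbolicconj = O(2)$, so that $D\conjugacy(0)$ is the inclusion of the center subspace and $\mathrm{Hess}(g)(0)$ is the upper-left $2\times 2$ block of $\mathcal{C}^\top \mathrm{Hess}(E)(\boldx_0)\,\mathcal{C}$. The key computation is that $\mathrm{Hess}(E)(\boldx_0)\,\bv_0 = (2(\Omega_{xx}\Omega_{yy} - \Omega_{xy}^2), 0, 0, 0) = 0$, because the spatial part $(\Omega_{yy}, -\Omega_{xy})$ of $\bv_0$ is exactly the null vector of $\mathrm{Hess}(\Omega)$ at a critical point. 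Consequently the first row and column of $\mathrm{Hess}(g)(0)$ vanish, giving $g_{20} = g_{11} = 0$ and $g_{02} = \Omega_{yy}(4 - \Omega_{xx} - \Omega_{yy})$, which is nonzero by the Claim stated just before the proof of Proposition \ref{thm: bifurcationpoint}.

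Next I would isolate the cubic coefficient $g_{30}$ of $x^3$, which is where $\mathscr{E}$ enters. The crucial simplification is that the vanishing of the first column of $\mathrm{Hess}(\energy)(0)$ forces the quadratic part of $\conjugacy$ to drop out of the $x^3$ coefficient; hence $g_{30}$ equals the $u_1^3$-coefficient of $\energy$, that is, the cubic part of $2\Omega$ evaluated along the spatial displacement $(\Omega_{yy}, -\Omega_{xy})$. A direct expansion yields $g_{30} = \tfrac13 \mathscr{E}$. With these expansions, the lowest nonvanishing order of $g(\branch(t)) - E_0$ is $t^6$, with the only contributions coming from $g_{02}y^2$ and $g_{30}x^3$, so the base-case equation reads $g_{02} + g_{30}\,c_2^3 = 0$, i.e. $c_2^3 = -3\,\Omega_{yy}(4 - \Omega_{xx} - \Omega_{yy})/\mathscr{E}$. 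This admits a nonzero real solution exactly when $\mathscr{E} \neq 0$, which is the role of the hypothesis.

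Finally I would close the recursion: at each order $t^{m}$ with $m \ge 7$, the unknown $c_{m-4}$ enters only through $g_{30}x^3$, with linear coefficient $3 g_{30} c_2^2 \neq 0$, while all other contributions involve only $c_2, \ldots, c_{m-5}$; here one uses $g_{10} = g_{20} = g_{11} = 0$ to rule out any earlier, lower-order appearance of $c_{m-4}$. Thus every higher coefficient is uniquely determined, completing the formal solution. The main obstacle I anticipate is the bookkeeping that simultaneously establishes $g_{30} = \tfrac13\mathscr{E}$ and the vanishing of $g_{20}$, $g_{11}$, and the first column of the Hessian: one must verify carefully that the (a priori unknown) second-order terms of $\conjugacy$ genuinely cannot contribute to the $x^3$ coefficient, since it is precisely this cancellation that makes the leading equation depend on $\mathscr{E}$ alone rather than on the complicated center-manifold embedding.
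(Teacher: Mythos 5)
Your proposal is correct and follows essentially the same route as the paper's proof: power-matching in $t$, establishing that the constant, linear, and the relevant quadratic coefficients of the energy vanish at criticality (your $g_{20}=g_{11}=0$ are the paper's $\energy_{1,1}=0$ and $\energy_{1,i}=0$), so that the leading order is $t^6$ with base equation $g_{02}+g_{30}c_2^3=0$ (the paper's $\tfrac{1}{2}\energy_{2,2}+\tfrac{1}{6}\energy_{1,1,1}c_2^3=0$), followed by the same linear recursion for the higher coefficients with nonvanishing leading coefficient $3g_{30}c_2^2=\mathscr{E}c_2^2\neq 0$. The only difference is presentational: you derive the vanishing of the first row and column of the transformed Hessian from the observation that $\bv_0$ lies in the kernel of the Hessian of $E$ at $L_c$ (which also cleanly justifies why the quadratic part of $\conjugacy$ cannot pollute $g_{30}$), whereas the paper verifies the same identities by direct chain-rule computation.
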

\begin{proof}
We first note that 
\begin{align*}
\energy_0 \bydef \energy(\bzero) = \energy(\conjugacy(\bzero)) 
= \energy(\conjugacy(\branch(0))) =
\energy \circ \conjugacy \circ \branch (0),
\end{align*}
thus the constant term of Equation \eqref{eq: stablebranch} vanishes. For the higher order
 terms we want to find the Taylor expansion of $\energy$. Since both $\Omega_x$ and 
 $\Omega_y$ vanish, we see that $\energy$ has no linear terms in its Taylor expansion. 
 We assume that $\branch(t) = (O(t^2), t^3)$, hence we have 
 $\conjugacy( \branch (t))= (O(t^2), O(t^3), O(t^4), O(t^4) )$. Thus we must show that the 
 coefficient of $\boldx_1^2$ in the expansion of $\energy$ is $0$. We have by the chain rule, we write $\energy_{i_1,\dots,i_m}$ to denote the partial derivative of $\energy$ with respect to $x_{i_1}$,\dots, $x_{i_m}$,
\begin{align*}
\energy_{1,1} &= 2 \Omega_{xx} \Omega_{yy}^2  
+ 4\Omega_{xy} \Omega_{yy} \left( - \Omega_{xy} \right) + 2
 \Omega_{yy} \left( - \Omega_{xy} \right)^2 = 0.
\end{align*}
Therefore, the expansion of $\energy \circ \conjugacy \circ \branch - \energy_0$ is at least of order $5$. To rule 
out terms of order $5$, we must show that the coefficient of $\boldx_1\boldx_2$ in the expansion 
of $\energy$ is $0$. In fact, we have the more general result
\begin{align*}
\energy_{1,i} &= 2 \Omega_{xx} \Omega_{yy} \mathcal{C}_{1,i}  + 2 \Omega_{xy} \left( \Omega_{yy} \mathcal{C}_{3,i} - \Omega_{xy} \mathcal{C}_{1,i} \right) + 2 \Omega_{yy} \left( - \Omega_{xy} \right) \mathcal{C}_{3,i} = 0,
\end{align*}
for $i = 2,3,4$. Thus the leading term of $\energy \circ \conjugacy \circ \branch - \energy_0$ is of order 
$6$, and its coefficient is determined by the coefficients of $\boldx_2^2$ and $\boldx_1^3$ 
in the expansion of $\energy$. We have
\begin{align*}
\energy_{2,2} &= 2 \Omega_{xx} \Omega_{xy}^2 + 4
\Omega_{xy} \Omega_{xy} \left( 2 - \Omega_{xx} \right) + 2 \Omega_{yy} \left( 2- \Omega_{xx} \right)^2 - 2 \left( \Omega_{yy}^2 + \left( - \Omega_{xy} \right)^2 \right) \\
			&= 2 \Omega_{yy} \left( 4 - \Omega_{xx} - \Omega_{yy} \right), \\
\energy_{1,1,1} 
&=2 \left( \Omega_{xxx} \Omega_{yy}^3 -
 3 \Omega_{xxy} \Omega_{yy}^2 \Omega_{xy} +
  3 \Omega_{xyy} \Omega_{yy} \Omega_{xy}^2 - 
  \Omega_{yyy} \Omega_{xy}^3 \right).
\end{align*} 
From the proof of Proposition \ref{thm: bifurcationpoint}, it follows that both $\Omega_{yy}$ and 
$4 - \Omega_{xx} - \Omega_{yy}$ are non-zero, hence $\energy_{2,2} \neq 0$.
 By assumption, $\frac{1}{2} \energy_{1,1,1} = \mathscr{E} \neq 0$. Hence, the leading order 
 of the expansion of $\energy \circ \conjugacy \circ \branch - \energy_0$ is 
\begin{align*}
\frac{1}{2} \energy_{2,2} t^6  +  \frac{1}{6} \energy_{1,1,1} c_2^3 t^6.
\end{align*}
Since we want that $\energy \circ \conjugacy \circ \branch - \energy_0 = 0$, we must have  $\frac{1}{2} \energy_{2,2} + \frac{1}{6} \energy_{1,1,1} c_2^3 =0$, which uniquely determines $c_2 = \sqrt[3]{- 3 \energy_{2,2}/\energy_{1,1,1}} \neq 0$, with the convention 
$\sqrt[3]{-x} = - \sqrt[3]{x}$ for $x \ge 0$. We can now recursively find $c_n$ for $n \ge 3$.
 If we have found $c_m$ for $m \le n$ such that  $\energy \circ \conjugacy \circ \branch - \energy_0$ 
 vanishes for all order up to $t^{n+4}$, the $n+5$-th order is given by
\begin{align*}
\frac{1}{2}\energy_{1,1,1} c_2^2 c_{n+1} t^{n+5} + \mathcal{P}_{n+5}.
\end{align*}
Here $\mathcal{P}_{n+5}$ is the $n+5$-th order term of $\energy(\conjugacy(\branch^{<n}(t))) - \energy_0$ where $\branch^{<n} (t) = \left( \genfrac{}{}{0pt}{}{0}{t^3}\right) + \sum_{m=2}^{n-1} \left( \genfrac{}{}{0pt}{}{c_mt^m}{0}\right)$. Hence 
$\mathcal{P}_{n+5}$ only depends $c_m$ for $m \le n$. 
Hence $c_{n+1}$ is uniquely determined 
and $\energy \circ \conjugacy \circ \branch - \energy_0$ vanishes for all order up to $t^{n+5}$. 
Thus $\branch : \mathbb{R} \to \mathbb{R}^2$ solves Equation \eqref{eq: stablebranch}. 
\end{proof}

Depending on the sign of $t$, $\branch(t)$ is part of the unstable or stable solution branch. We will see that the stable branch is given by $\stablebranch \bydef \branch |_{t > 0}$ and the unstable branch is  $\unstablebranch \bydef \branch |_{t < 0}$. Finally, we show 
 that we can find a unique expansion for the center stable manifold.

\begin{lemma}\label{thm: centerstableconjugacyformalseries}
There exists formal series $\centerstableconj : \mathbb{R}^3 \to \mathbb{R}^4$ and 
$\stabledynamics : \mathbb{R} \to \mathbb{R}$ given by
\begin{align}
\centerstableconj \begin{pmatrix} x \\ y \\ z \end{pmatrix} 
&=\begin{pmatrix} x \\ y \\ z \\ 0 \end{pmatrix} +  
\sum_{n=2} \sum_{\substack{(i,j,k) \in 
\mathbb{N}^3 \\ i + j + k = n }} \begin{pmatrix} a_{i,j,k} x^iy^j z^k \\
b_{i,j,k} x^i y^j z^k \\
\alpha_{i,j,k} x^i y^j z^k \\
\beta_{i,j,k} x^i y^j z^k 
\end{pmatrix}, \\
\stabledynamics (x) &= \lambda_+ + \sum_{n=2}^\infty \zeta_{n} x^{n-1},
\end{align}
which solve Equation \eqref{eq: centerstableconjugacy}. Furthermore, we demand that  
$a_{i,j,0}$, $b_{i,j,0}$, $\alpha_{i,j,0}$ and $\beta_{i,j,0}$ are given by the constants $a_{i,j}$, $b_{i,j}$, $\alpha_{i,j}$ and $\beta_{i,j}$ from Lemma \ref{thm: centerconjugacyformalseries} respectively. This uniquely determines $\centerstableconj$ and 
$\stabledynamics$ except for $\alpha_{0,k-1,1}$ for $k \ge 1$.
\end{lemma}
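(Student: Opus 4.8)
The plan is to mirror the recursive, order-by-order scheme already used in the proof of Lemma~\ref{thm: centerconjugacyformalseries}. Write $A \bydef D\dynamics(\bzero)$, which in the chosen Jordan coordinates is block diagonal: a nilpotent block $\left(\begin{smallmatrix}0&1\\0&0\end{smallmatrix}\right)$ on the center coordinates $(x,y)$, the stable eigenvalue $\lambda_+$ on $z$, and the unstable eigenvalue $\lambda_-=-\lambda_+$ on the fourth slot. Collecting the homogeneous terms of degree $n$ in Equation~\eqref{eq: centerstableconjugacy}, each component yields a linear equation in the unknown degree-$n$ coefficients $a_{i,j,k},b_{i,j,k},\alpha_{i,j,k},\beta_{i,j,k}$ (with $i+j+k=n$) together with the single new series coefficient $\zeta_n$, and an inhomogeneity $\mathcal{P}^n$ that depends only on coefficients of order $<n$, exactly as before. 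The constraint $\centerstableconj(\boldy,0)=\conjugacy(\boldy)$ fixes every $k=0$ coefficient to equal its counterpart in $\conjugacy$; restricting Equation~\eqref{eq: centerstableconjugacy} to $z=0$ reproduces Equation~\eqref{eq:invEq}, so this slice is automatically consistent with Lemma~\ref{thm: centerconjugacyformalseries} and no $k=0$ equation must be re-solved.

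It then remains to invert, for each $n$ and each $k\ge1$, the homological operator $\mathcal{L}h \bydef Dh\,N - A\,h$ (with $N$ the linear part of the conjugate field $(\centerdynamics,\stabledynamics(x)z)$, namely $N(x,y,z)=(y,0,\lambda_+z)$) applied to the degree-$n$ part of $\centerstableconj$. A direct computation gives, at the monomial $x^iy^jz^k$,
\begin{align*}
(\mathcal{L}h)_1 &= k\lambda_+ a_{i,j,k} + (i+1)a_{i+1,j-1,k} - b_{i,j,k}, & (\mathcal{L}h)_3 &= (k-1)\lambda_+ \alpha_{i,j,k} + (i+1)\alpha_{i+1,j-1,k}, \\
(\mathcal{L}h)_2 &= k\lambda_+ b_{i,j,k} + (i+1)b_{i+1,j-1,k}, & (\mathcal{L}h)_4 &= (k\lambda_+-\lambda_-)\beta_{i,j,k} + (i+1)\beta_{i+1,j-1,k}.
\end{align*}
For $k\ge1$ the diagonal coefficients $k\lambda_+$ (components $1,2$), $(k-1)\lambda_+$ for $k\ge2$ (component $3$), and $k\lambda_+-\lambda_-=(k+1)\lambda_+$ (component $4$) are all nonzero, while the off-diagonal term only raises $i$ and lowers $j$. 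Hence within each fixed pair $(k,n)$ the operator is triangular with invertible diagonal: I would solve component $2$ for $b$, substitute into component $1$ for $a$, and solve components $3$ (for $k\ge2$) and $4$ directly, each coefficient being uniquely determined. The only place this triangular inversion can break down is component $3$ with $k=1$.

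The heart of the matter is exactly this resonance. When $k=1$ one has $(k-1)\lambda_+=0$, and $(\mathcal{L}h)_3$ collapses to the pure shift $S:\alpha_{i,j,1}\mapsto(i+1)\alpha_{i+1,j-1,1}$ inherited from the nilpotent center block. On degree-$(n-1)$ polynomials in $(x,y)$ this $S$ is nilpotent with a one-dimensional cokernel, spanned by the top monomial $x^{n-1}z$ at $(i,j)=(n-1,0)$ (which has no preimage), and a one-dimensional kernel, spanned by the bottom monomial $y^{n-1}z$ at $(i,j)=(0,n-1)$. The decisive observation is that $\zeta_n$ enters the $k=1$, component-$3$ equation through $\partial_z$ of the linear part of $\centerstableconj$ multiplied by $\stabledynamics(x)z$, contributing $\zeta_n\,x^{n-1}z$, i.e.\ precisely the cokernel direction of $S$. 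I would therefore choose $\zeta_n$ uniquely to cancel the projection of $\mathcal{P}^n$ onto that cokernel, restoring solvability; the remaining coefficients $\alpha_{\cdot,\cdot,1}$ are then determined up to the kernel, i.e.\ up to the single free coefficient $\alpha_{0,n-1,1}$. Relabeling $n\mapsto k$, the undetermined coefficients are exactly the $\alpha_{0,k-1,1}$, $k\ge1$ (the lowest, the linear $z$-coefficient, being normalized to one in the ansatz), and $\centerstableconj,\stabledynamics$ are otherwise unique.

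The main obstacle is precisely the bookkeeping of this resonance: one must verify that $S$ has exactly the claimed one-dimensional kernel and cokernel and, crucially, that $\zeta_n$ is the only order-$n$ unknown landing in the cokernel, so that solvability can always be enforced by a unique choice of $\zeta_n$ without over- or under-determining the system. Everything else — that $\mathcal{P}^n$ depends only on lower orders and that the non-resonant components admit the triangular inversion above — is a routine extension of the argument already given for Lemma~\ref{thm: centerconjugacyformalseries}.
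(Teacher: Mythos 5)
Your proposal is correct and follows essentially the same route as the paper's proof: an order-by-order recursion in which the $k=0$ slice is fixed by consistency with Lemma \ref{thm: centerconjugacyformalseries}, the components for $k\ge 1$ are solved by the same triangular elimination (component 2 for $b$, then component 1 for $a$, then components 3 and 4), and the sole resonance at $k=1$ in the third component is resolved by choosing $\zeta_n$ to kill the $x^{n-1}z$ obstruction, leaving exactly $\alpha_{0,n-1,1}$ free. Your kernel/cokernel phrasing of the shift operator is just a more structural restatement of what the paper does coefficient-by-coefficient, so the two arguments coincide in substance.
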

\begin{proof}
We define the homogeneous polynomial
\begin{align*}
\mathcal{P}_{\centerstableconj}^n \begin{pmatrix} x \\ y \\ z 
\end{pmatrix} &\bydef \begin{pmatrix} \mathcal{P}_{\centerconj}^n \\
 \mathcal{P}_{\hyperbolicconj}^n \end{pmatrix} \begin{pmatrix}  x \\ 
 y \end{pmatrix} +  \sum_{\substack{(i,j,k) \in \mathbb{N}^3 \\ 
 k \neq 0 \\ i + j + k = n }} \begin{pmatrix} a_{i,j,k} x^iy^j z^k \\
b_{i,j,k} x^i y^j z^k \\
\alpha_{i,j,k} x^i y^j z^k \\
\beta_{i,j,k} x^i y^j z^k 
\end{pmatrix},
\end{align*}
with $\mathcal{P}_{\centerconj}^n$ and $\mathcal{P}_{\hyperbolicconj}^n$ 
defined in Lemma \ref{thm: centerconjugacyformalseries}. We want to recursively
 find $\mathcal{P}_{\centerstableconj}^n$ together with $\zeta_n$. Thus, 
 suppose we have found $\mathcal{P}_{\centerstableconj}^m$ together 
 with $\zeta_m$ for $m \le n-1$ such that Equation \eqref{eq: centerstableconjugacy} 
 vanishes up order $n-1$. Then the $n$-th order of 
Equation \eqref{eq: centerstableconjugacy} is given by
\begin{align}
\sum_{\substack{(i,j,k) \in \mathbb{N}^3 \\ i + j + k = n }} \begin{pmatrix} 
b_{i,j,k} x^i y^j z^k - \gamma_{i,j} \delta_{k,0} x^iy^j - i a_{i,j,k} x^{i-1}y^{j+1} z^k - k 
\lambda_+ a_{i,j,k} x^i y^j z^k  \\
 - \varepsilon_{i,j} \delta_{k,0} x^iy^j - i b_{i,j,k} x^{i-1}y^{j+1} z^k - k 
 \lambda_+ b_{i,j,k} x^i y^j z^k  \\
\lambda_+ \alpha_{i,j,k} x^i y^j z^k - \delta_{j,0} \delta_{k,1} 
\zeta_{n} x^{n-1} z - i \alpha_{i,j,k} x^{i-1}y^{j+1} z^k - k
 \lambda_+ \alpha_{i,j,k} x^i y^j z^k  \\
  \lambda_- \beta_{i,j,k} x^i y^j z^k - i 
  \beta_{i,j,k} x^{i-1}y^{j+1} z^k - k
   \lambda_+ \beta_{i,j,k} x^i y^j z^k  
\end{pmatrix} - \mathcal{P}^n \begin{pmatrix} x \\ y \\ z 
\end{pmatrix}. \label{eq: nthordercsequation}
\end{align}
Here $\mathcal{P}^n$ consists of the $n$-th order terms of $\dynamics(\centerstableconj^{<n}(\boldy,z)) - D\centerstableconj^{<n}(\boldy,z) \left( \centerdynamics (\boldy) ,\stabledynamics^{<n}(x)z \right)$ where $\centerstableconj^{<n} = \sum_{m=1}^{n-1} P_{\centerstableconj}^m$ and $ \stabledynamics^{<n}(x) = \lambda_+ + \sum_{m=2}^{n-1}  \zeta_m x^{m-1}$. For part of the computational implementation of obtaining $\mathcal{P}^n$ we refer to Appendix \ref{sec:numericalCenterManifold}. Hence  
$\mathcal{P}^n$ depends on $\mathcal{P}_{\centerstableconj}^m$ and $\zeta_{m}$ 
for $m < n$. We will show that we can recursively make 
 \eqref{eq: nthordercsequation}  vanish, starting from $k=0$ up to $k = n$. 

When $k=0$, 
 \eqref{eq: nthordercsequation} reduces to
\begin{align}
\sum_{\substack{(i,j) \in \mathbb{N}^2 \\ i + j = n }} \begin{pmatrix} 
b_{i,j,0} x^i y^j - \gamma_{i,j}  x^iy^j - i a_{i,j,0} x^{i-1}y^{j+1}   \\
 - \varepsilon_{i,j}  x^iy^j - i b_{i,j,0} x^{i-1}y^{j+1}    \\
\lambda_+ \alpha_{i,j,0} x^i y^j   - i \alpha_{i,j,0} x^{i-1}y^{j+1}    \\
 \lambda_- \beta_{i,j,0} x^i y^j  - i \beta_{i,j,0} x^{i-1}y^{j+1}   
\end{pmatrix} - \mathcal{P}^n \begin{pmatrix} x \\ y \\ 0 \end{pmatrix}. \label{eq: centerstableconstantz}
\end{align}
We can show that $\mathcal{P}^n(x,y,0)$ and $\mathcal{P}^n(x,y)$ from 
 \eqref{eq: nthorderconjugacy} 
coincide as we assumed that $a_{i,j,0}$, $b_{i,j,0}$, $\alpha_{i,j,0}$, and $\beta_{i,j,0}$ coincide 
with $a_{i,j}$, $b_{i,j}$, $\alpha_{i,j}$, and $\beta_{i,j}$ for $i+j<n$. Hence 
 \eqref{eq: nthorderconjugacy} 
and 
 \eqref{eq: centerstableconstantz} coincide, and thus demanding $a_{i,j,0} = a_{i,j}$, $b_{i,j,0} = b_{i,j}$, 
$\alpha_{i,j,0} = \alpha_{i,j}$, and $\beta_{i,j,0} = \beta_{i,j}$ ensures that 
 \eqref{eq: centerstableconstantz} 
vanishes.

When $k=1$, 
 \eqref{eq: nthordercsequation} reduces to
\begin{align}
\sum_{\substack{(i,j) \in \mathbb{N}^2 \\ i + j  = n -1 }} z \begin{pmatrix} 
b_{i,j,1} x^i y^j  - i a_{i,j,1} x^{i-1}y^{j+1} - \lambda_+ a_{i,j,1} x^i y^j   \\
 - i b_{i,j,1} x^{i-1}y^{j+1} -  \lambda_+ b_{i,j,1} x^i y^j   \\
 - \delta_{j,0} \zeta_{n} x^{n-1} - i \alpha_{i,j,1} x^{i-1}y^{j+1}   \\
  2 \lambda_-  \beta_{i,j,1} x^i y^j - i \beta_{i,j,1} x^{i-1}y^{j+1} 
\end{pmatrix} - \mathcal{P}^{n,1} \begin{pmatrix} x \\ y \\ z \end{pmatrix}.  \label{eq: centerstablelinearz}
\end{align}
Here we used in the fourth coordinate that $\lambda_+ = - \lambda_-$. The polynomial $\mathcal{P}^{n,1}$ consists of the terms linear in $z$ in $\mathcal{P}^n$. We can find unique $b_{i,j,1}$, $a_{i,j,1}$, $\zeta_n$, 
 $\alpha_{i,j,1}$, $\beta_{i,j,1}$ such that 
  \eqref{eq: centerstablelinearz} vanishes. To do so, we first find 
 $b_{n-1,0,1}$, which determines $b_{i,j,1}$ and $a_{i,j,1}$ for $i+j = n-1$. Secondly, we find $\zeta_n$ and 
 $\alpha_{i,j,1}$ independently of each other. Finally, we find $\beta_{n-1,0,1}$ which determines 
 $\beta_{i,j,1}$ for $i+j = n-1$. 

When $k \ge 2$, 
 \eqref{eq: nthordercsequation} reduces to
 \begin{align}
\sum_{\substack{(i,j) \in \mathbb{N}^2 \\ i + j  = n -k }} z^k \begin{pmatrix} 
b_{i,j,k} x^i y^j  - i a_{i,j,k} x^{i-1}y^{j+1}  - k \lambda_+ a_{i,j,k} x^i y^j   \\
- i b_{i,j,k} x^{i-1}y^{j+1}  - k \lambda_+ b_{i,j,k} x^i y^j   \\
(1-k) \lambda_+ \alpha_{i,j,k} x^i y^j   - i \alpha_{i,j,k} x^{i-1}y^{j+1}    \\
 (k+1) \lambda_- \beta_{i,j,k} x^i y^j  - i \beta_{i,j,k} x^{i-1}y^{j+1}    \\
\end{pmatrix} - \mathcal{P}^{n,k} \begin{pmatrix} x \\ y \\ z \end{pmatrix}. \label{eq: centerstablezk}
\end{align}
We again used that $\lambda_+  = \lambda_-$, and the polynomial $\mathcal{P}^{n,k}$ consists of  the terms which are of order $z^k$ in $\mathcal{P}^n$. Similar to what 
we did for 
 \eqref{eq: centerstablelinearz}, we can make 
  \eqref{eq: centerstablezk} vanish. The only difference
 is that we first find $\alpha_{n-k,0,k}$ instead of $\zeta_n$, and that this uniquely determines $\alpha_{i,j,k}$. 

Thus, we can recursively make 
 \eqref{eq: nthordercsequation} vanish, and we see that only $\alpha_{0,n-1,1}$ 
is not uniquely determined.
\end{proof}

\section{Numerical calculation of the center manifold}
\label{sec:numericalCenterManifold}

In Lemmas \ref{thm: centerconjugacyformalseries} to 
\ref{thm: centerstableconjugacyformalseries} we show that there exists formal series 
for the center manifold, a stable branch on the center 
manifold, and the center-stable manifold. To compute for example the center manifold, 
we have to calculate some homogeneous polynomials $\mathcal{P}^n(x,y)$. 
In this case, $\mathcal{P}^n(x,y)$ is the 
homogeneous polynomial of degree $n$ of the expression 
$\dynamics \circ \conjugacy^{<n} - D\conjugacy^{n<} \cdot \centerdynamics^{<n}$, where $\conjugacy^{<n}$ and $\centerdynamics^{<n}$ are the Taylor polynomial up to order $n-1$ of $\conjugacy$ and $\centerdynamics$ respectively. If we also replace $\dynamics$ by its Taylor Polynomials, we have to find the homogeneous polynomials of degree $n$ in the expressions $ \left(\conjugacy^{<n} \right)^\alpha$ and $\left(\centerdynamics^{<n} \right)^\alpha$ for $|\alpha|_1 \le n$.

In \cite{mamotreto}, radial derivates are used to find expressions for those homogeneous polynomials, 
provided that their constant term is non-zero. However, we constructed $\conjugacy$ and 
$\centerdynamics$ in such a way that their constant terms vanish. We can still use the described 
method to find an expression for $g^N$ in the case that $g(\bzero)$ does vanish. Here $g$ is a multivariate polynomial, which consists of the homogeneous polynomials $g_j$, and $N$ is a scalar. Let $\mathcal{R}$ 
denote the radial derivative of $g$, that is $\mathcal{R}(g)(\boldx) \bydef D g(\boldx) \cdot \boldx$.
Then we find
\begin{align*}
 \mathcal{R}(g^N) (\boldx) =  \sum_{i} \frac{\partial}{\partial x_i} g^N(\boldx) \cdot x_i = N g^{N-1}(\boldx) \mathcal{R}(g)(\boldx).
\end{align*}
Let $h = g^N$, then we find $g \mathcal{R}(h) = N h \mathcal{R}(g)$. Since $\mathcal{R}(P) = m P$ for 
homogeneous polynomials of degree $m$, we find that the $m$-th order term of 
$g \mathcal{R}(h) - N h \mathcal{R}(g)$ is given by
\begin{align*}
\sum_{j=0}^m \left( (m-j) - N j \right) g_j h_{m-j}.
\end{align*}
Since $g_0$ vanishes, the first non-zero term of $h$ is $h_N \bydef g_1^N$, and for higher order 
terms we have the recurrence relation
\begin{align*}
g_1 h_m = \frac{1}{N - m} \sum_{j=2}^{m+1 - N} g_j h_{m+1-j} ( m + 1 - j - N j).
\end{align*}
We use this to find homogeneous terms of order $N$ in compositions $g_1 \circ g_2$, by 
replacing $g_1$ with its Taylor polynomial of order $N$, and finding the homogeneous 
terms of $g_1^i(g_2)$ for $1 \le i \le N$ using the previous recurrence relation.

\bibliographystyle{unsrt}
\bibliography{papers}

\end{document}